\newtheorem{defn}{Definition}[section]
\newtheorem{thm}[defn]{Theorem}
\newtheorem{lem}[defn]{Lemma}
\newtheorem{cor}[defn]{Corollary}
\newtheorem{prop}[defn]{Proposition}
\newcommand\A{\mathbb A}
\newcommand\B{\mathbb B}
\newcommand\C{\mathbb C}
\newcommand\E{\mathbb E}
\newcommand\G{\mathbb G}
\newcommand\NN{\mathbb N}
\newcommand\PP{\mathbb P}
\newcommand\R{\mathbb R}
\newcommand\Q{\mathbb Q}
\newcommand\V{\mathbb V}
\newcommand\Z{\mathbb Z}
\newcommand{\kk}{\mathds{k}}
\newcommand\cC{\mathcal C}
\newcommand\cF{\mathcal F}
\newcommand\cI{\mathcal I}
\newcommand\cO{\mathcal O}
\newcommand\cP{\mathcal P}
\newcommand\cX{\mathcal X}
\newcommand\fd{\mathfrak d}
\newcommand\fD{\mathfrak D}
\newcommand\fm{\mathfrak m}
\newcommand\Spec{\operatorname{Spec}\,}
\newcommand\Spf{\operatorname{Spf}\,}
\newcommand\Supp{\operatorname{Supp}}
\newcommand\Lift{\operatorname{Lift}}
\newcommand\Mono{\operatorname{Mono}}
\newcommand\virt{\mathrm{virt}}
\newcommand\can{\mathrm{can}}
\newcommand\gp{\mathrm{gp}}
\newcommand\iin{\mathrm{in}}
\newcommand\out{\mathrm{out}}
\title[Quantum mirrors of log Calabi-Yau surfaces]{Quantum mirrors of log Calabi-Yau surfaces and higher genus curve counting}
\author{Pierrick Bousseau}
\date{}
\begin{document}

\begin{abstract}
Gross, Hacking and Keel have constructed mirrors of log Calabi-Yau surfaces in terms of counts of rational curves.
Using $q$-deformed scattering diagrams
defined in terms of higher-genus 
log Gromov-Witten invariants, we construct deformation quantizations
of these mirrors and we 
produce canonical bases of the corresponding  non-commutative algebras of functions. 
\end{abstract}

\maketitle

\setcounter{tocdepth}{1}

\tableofcontents

\thispagestyle{empty}

\section{Introduction}

\subsection{Context and motivations}

\subsubsection{Mirror symmetry} 
The Strominger--Yau--Zaslow (SYZ) 
\cite{MR1429831}
picture of mirror symmetry suggests
an original way of constructing algebraic varieties:
given a Calabi-Yau variety, its mirror geometry should be constructed
in terms of its enumerative geometry of holomorphic discs.
This picture has been developed by Fukaya
\cite{MR2131017},  
Kontsevich and Soibelman \cite{MR2181810},
Gross and Siebert \cite{MR2846484}, Auroux 
\cite{MR2386535} and many others. 
In particular,
Gross and Siebert have developed an algebraic approach in which the enumerative geometry of holomorphic discs is replaced by some 
genus-0 logarithmic Gromov--Witten invariants. Given the recent progress
in logarithmic Gromov--Witten theory, in particular the definition of punctured invariants by Abramovich, Chen, Gross and Siebert
\cite{abramovich2017punctured}, it is likely that this approach will lead to some general mirror symmetry construction in the algebraic setting, see Gross and Siebert
\cite{gross2016intrinsic} for an 
announcement.

\subsubsection{The work of Gross, Hacking, and Keel}
An early version of this mirror construction has been used by 
Gross, Hacking, and Keel \cite{MR3415066}  
to construct mirror families of log Calabi-Yau surfaces, with non-trivial applications to the theory of surface singularities and in particular a proof of Looijenga's conjecture on smoothing of cusp singularities. 
More precisely, the construction of \cite{MR3415066} applies to Looijenga pairs, that is, to pairs $(Y,D)$ where 
$Y$ is a smooth projective surface and $D$ is some anticanonical singular nodal curve.
The upshot is in general a formal flat family 
$\cX \rightarrow S$ of surfaces over a formal completion, near some point $s_0$, the `large volume limit of Y', of an algebraic approximation to the complexified Kähler cone of $Y$.

Furthermore, $\cX$ is an affine Poisson formal variety with a canonical linear basis of so-called
theta functions and the map 
$\cX \rightarrow S$ is Poisson if $S$ is equipped with the zero Poisson bracket.
Under some positivity assumptions on $(Y,D)$, this family can be in fact extended to an algebraic family over an algebraic base
and the generic fiber is then a smooth algebraic symplectic surface.
To simplify the exposition in this introduction, we assume for now that it is the case.

The first step of the construction involves defining the fiber $\cX_{s_0}$, that is,
the `large complex structure limit' of the family $\cX$. This step is essentially combinatorial and can be reduced to some toric geometry: $\cX_{s_0}$ is a reducible union of toric varieties.

The second step is to construct $\cX$ by smoothing of $\cX_{s_0}$.
This construction 
is based on the consideration of an 
algebraic object, a scattering diagram,
a notion introduced by Kontsevich and Soibelman
\cite{MR2181810} and further developed by 
Gross and Siebert \cite{MR2846484}, whose definition encodes genus-0 log Gromov--Witten invariants\footnote{In \cite{MR3415066}, an \emph{ad hoc} definition of genus-0 Gromov--Witten invariants is used, which was supposed to coincide with genus-0 log Gromov-Witten invariants. This fact follows from the remark at the end of \cite[\S 4]{MR3904449}. In the present paper, we use log Gromov-Witten theory systematically.}
 of $(Y,D)$. The key non-trivial property to check is the so-called consistency of the scattering diagram.
In \cite{MR3415066}, the consistency relies on the work of Gross, Pandharipande and Siebert
\cite{MR2667135}, which itself relies on connection with tropical geometry
\cite{MR2137980}, \cite{MR2259922}. Once the consistency of the scattering diagram is guaranteed, some combinatorial objects, the broken lines
\cite{cps}, are well defined and can be used to construct the algebra of functions $H^0(\cX, \cO_\cX)$ with its
linear basis of theta functions.

\subsubsection{Quantization}\footnote{The existence of theta functions is related to the geometric quantization of the real integrable system formed by a Calabi-Yau manifold with an SYZ fibration. We do \emph{not}
refer to this quantization story. In this paper, quantization always means deformation quantization of a holomorphic symplectic/Poisson variety.}
The variety $\cX$ being a Poisson variety over $S$,
it is natural to ask about its quantization, for example in the sense of 
deformation quantization.
As $\cX$ and $S$ are affine, the deformation quantization problem takes its simplest form: to construct a structure of non-commutative $H^0(S, \cO_S)[\![\hbar]\!]$-algebra on $H^0(\cX, \cO_{\cX}) \otimes 
\C[\![\hbar]\!]$ whose commutator is given at the linear order in $\hbar$ by the Poisson bracket on $H^0(\cX, \cO_{\cX})$.
There are general existence results,
\cite{MR1855264}, \cite{MR2183259},  
for deformation quantizations of smooth affine Poisson varieties. Some useful reference on deformation quantization of algebraic symplectic varieties is \cite{MR2119140}. 
In fact, on its smooth locus, the map $\cX \rightarrow S$ is relatively symplectic of relative dimension two and then the existence of a deformation is easy because the obstruction space vanishes for dimension reasons. But there are no known general results which would guarantee a priori the existence of a deformation quantization of $\cX$ over $S$ because 
$\cX \rightarrow S$ is singular, for example. over $s_0 \in S$ to start with. Specific examples of deformation quantization of such geometries usually involve some situation-specific representation theory or geometry: see,  
for example, \cite{MR2037756}, \cite{MR2329319}, \cite{MR2734346},\cite{MR3581328}.

\subsection{Main results}
The main result of the present paper is a construction of a deformation quantization of $\cX \rightarrow S$. Our construction 
follows the lines of Gross, Hacking and Keel
\cite{MR3415066} except that, rather than using only genus-0 log Gromov--Witten invariants, we use higher-genus log Gromov--Witten invariants, the genus parameter playing the role of the quantization parameter $\hbar$ on the mirror side. 

We construct a quantum version of a scattering diagram and we prove its consistency using the main result of 
\cite{bousseau2018quantum_tropical}, which itself relies on the connection with refined tropical geometry 
\cite{MR3904449}. 
Once 
the consistency of the quantum scattering diagram
is guaranteed, a quantum version of the 
broken lines is well defined and can be used to construct a deformation quantization of $H^0(\cX, \cO_\cX)$.
In fact, it follows from the use of 
\cite{bousseau2018quantum_tropical} that the dependence on the deformation parameter $\hbar$ is algebraic in 
$q=e^{i\hbar}$\footnote{Because in general $\cX$ is already a formal object, this claim has to be stated more precisely; see
Theorems
\ref{thm_q}. 
It is correct in the most naive sense if $(Y,D)$ is positive enough and $\cX$ is then really an algebraic family.},
something which in general cannot be obtained from some general deformation-theoretic argument.
In other words, the main result of the present paper can be phrased in the
following slightly vague terms (see Theorems
\ref{thm_1_ch3}--
\ref{thm_q} for precise statements).

\begin{thm} \label{main_thm_ch3}
The Gross-Hacking-Keel \cite{MR3415066} Poisson family
$\cX \rightarrow S$, the mirror of a Looijenga pair $(Y,D)$, admits
a deformation quantization, which can be constructed in a synthetic way from the higher-genus log Gromov-Witten theory of 
$(Y,D)$. Furthermore, the dependence on the deformation quantization parameter $\hbar$ is algebraic in $q=e^{i\hbar}$. 
\end{thm}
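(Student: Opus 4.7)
The plan is to follow the Gross--Hacking--Keel blueprint step by step, replacing each commutative object with a noncommutative $\hbar$-deformation whose structure constants are read off from higher genus log Gromov--Witten invariants of $(Y,D)$. Concretely, each wall of the classical scattering diagram of \cite{MR3415066} gets decorated with an automorphism of a quantum torus algebra -- a noncommutative deformation of the ring of functions on the algebraic torus associated with the wall -- whose logarithm is the natural $\lambda_g$-refined generating series of higher genus log Gromov--Witten invariants of $(Y,D)$, with the genus parameter identified via $q = e^{i\hbar}$ with the quantization parameter. In the classical limit $\hbar \to 0$ one recovers the wall-crossing functions of \cite{MR3415066}.

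The core technical step is the consistency of this quantum scattering diagram, meaning that the composition of wall-crossing automorphisms around any loop is trivial in the quantum torus. This is strictly stronger than classical consistency and does not reduce to it; it is precisely the content of the main result of \cite{bousseau2018quantum_tropical}, which identifies the relevant higher genus invariants with refined counts of tropical curves and then verifies consistency tropically, generalizing the strategy of \cite{MR2667135}. This quantum consistency is the main obstacle in the whole proof, and everything downstream is a noncommutative formal adaptation of \cite{MR3415066} and \cite{cps}. Granted consistency, one defines quantum broken lines as piecewise linear paths carrying monomials in a quantum torus and transforming under wall-crossing by the associated quantum automorphism; the sum over broken lines with fixed asymptotic direction and fixed endpoint then produces an element of the quantum algebra that is independent of the auxiliary choices.

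Finally, I would define quantum theta functions, indexed by the same integral points used classically, as sums over quantum broken lines, and define a product on the free $\C[\![\hbar]\!]$-module they span by counting pairs of broken lines with a common endpoint, weighted by the quantum torus multiplication at that endpoint. Associativity reduces to a second application of the consistency statement above. Setting $\hbar = 0$ recovers the Gross--Hacking--Keel algebra and its theta basis, while the order-$\hbar$ term of the commutator, computed from the leading quantum torus commutator together with the genus-one contribution to the wall functions, matches the Poisson bracket of $H^0(\cX, \cO_\cX)$, so we do obtain a deformation quantization in the required sense. The algebraicity in $q = e^{i\hbar}$ asserted in Theorem~\ref{main_thm_ch3} is inherited from the fact that the refined tropical invariants of \cite{bousseau2017tropical,bousseau2018quantum_tropical} are Laurent polynomials in $q^{\pm 1/2}$, so all wall-crossing functions, broken line monomials and product structure constants naturally live in a Laurent polynomial ring in $q^{\pm 1/2}$ rather than only in $\C[\![\hbar]\!]$; the precise statements with all the formal/algebraic subtleties flagged in the excerpt are Theorems~\ref{thm_1_ch3}, \ref{thm_2_ch3} and \ref{thm_q}.
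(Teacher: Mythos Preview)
Your proposal is correct and follows essentially the same route as the paper: build a canonical quantum scattering diagram from $\lambda_g$-weighted higher genus log Gromov--Witten invariants, prove its consistency by reducing (via the same chain of corner blow-ups, monoid changes, and pushing singularities to infinity as in \cite{MR3415066}) to the smooth-base case handled by \cite{bousseau2018quantum_tropical}, then define quantum broken lines and quantum theta functions and read off the algebra structure and its $q$-integrality from the BPS-type integrality of \cite{bousseau2018quantum_tropical}. One small correction: the order-$\hbar$ term of the commutator, i.e.\ the Poisson bracket, is governed entirely by the genus-zero wall functions together with the quantum torus commutator $[\hat z^m,\hat z^{m'}]=i\hbar\langle m,m'\rangle \hat z^{m+m'}+O(\hbar^2)$; the genus-one invariants enter only at the next order in $\hbar$.
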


The notion of quantum scattering diagram is already suggested at the end of \cite[\S 11.8]{MR2181810}
and is used by Soibelman 
\cite{MR2596639} to construct non-commutative deformations of 
non-archimedean K3 surfaces. The connection with quantization, for example in the context of cluster varieties
\cite{MR2567745, MR2641183}, was expected, and  quantum broken lines have been studied by 
Mandel \cite{mandel2015scattering}.
The key novelty of the present paper, 
building on the previous work 
\cite{MR3904449,bousseau2018quantum_tropical}
of the author, is the connection between these algebraic/combinatorial $q$-deformations and the geometric deformation given by higher-genus log Gromov--Witten theory.

This connection between higher-genus Gromov--Witten theory and quantization is perhaps a little surprising, even if similarly-looking statement are known or expected. In 
\cref{section_physics}, 
we explain that 
Theorem \ref{main_thm_ch3} should be viewed as an example of a higher genus mirror symmetry relation, the deformation quantization
being a two-dimensional reduction of the three-dimensional higher-genus B-model (BCOV theory). We also comment on the relation with expectations from string theory, in a way parallel to 
\cite[\S 8]{bousseau2018quantum_tropical}.

In the context of mirror symmetry,
there is a well-known symplectic interpretation of some 
non-commutative deformations on the B-side,
involving deformation of the complexified 
symplectic form which does not preserve the
Lagrangian nature of the fibers of the SYZ fibration. An example of this phenomenon has been studied by
Auroux, Katzarkov and Orlov
\cite{MR2257391} in the context of mirror
symmetry for del Pezzo surfaces.
There is some work in progress by Sheridan and Pascaleff
about generalizing this approach to study 
non-commutative deformations of mirrors of log Calabi-Yau varieties.
This approach remains entirely in the traditional realm of genus-0 
holomorphic curves and so is completely 
different from our approach using higher-genus curves. The compatibility of these two approaches can be understood via a chain of string-theoretic dualities.

It is natural to ask how the deformation quantization given by Theorem 
\ref{main_thm_ch3} is
related to previously known examples of quantization.
In \cref{section_examples_ch3} we treat a simple example and we recover a well-known description of the $A_2$ quantum $\cX$-cluster variety \cite{MR2567745}. 

For $Y$ a cubic surface in $\PP^3$ and $D$
a triangle of lines on $Y$, the quantum scattering diagram can be explicitly computed and so 
using techniques similar to those developed in 
\cite{gross2019mirror}, one should be able to show that
 the deformation quantization given by Theorem 
\ref{main_thm_ch3} coincides with the one constructed by Oblomkov 
\cite{MR2037756} using Cherednik algebras (double affine Hecke algebras). We leave this verification, and the general relation to quantum $\cX$-cluster varieties, to future work.

Similarly, if $Y$ is a del Pezzo surface of degree $1$, $2$ or $3$ and $D$ a nodal cubic, it would be interesting to compare Theorem \ref{main_thm_ch3}
with the construction of Etingof, 
Oblomkov and Rains \cite{MR2329319}
using Cherednik algebras. In these cases, the quantum 
scattering diagrams are extremely complicated and new ideas are probably required.

\subsection{Plan of the paper}

In 
\cref{section_basics}, we set up our notation and we give precise versions of the main results.
In 
\cref{section_scattering} we describe the formalism of 
quantum scattering diagrams and quantum broken lines. In 
\cref{section_canonical}
we explain how to
associate to every Looijenga pair $(Y,D)$ 
a canonical quantum scattering diagram constructed in terms of higher-genus 
log Gromov--Witten invariants of 
$(Y,D)$. The key result in our construction is Theorem 
\ref{thm_consistency}
establishing the consistency of the canonical quantum scattering diagram.
The proof of 
\mbox{Theorem 
\ref{thm_consistency}}
follows the reduction steps used by Gross, Hacking and Keel \cite{MR3415066} in the genus-0 case. In the final step, we use the main result of \cite{bousseau2018quantum_tropical}
in place of the main result of \cite{MR2667135}.
In 
\cref{section_ext} we finish the proofs of the main theorems.
In 
\ref{section_examples_ch3}, we work out an explicit example.
Finally, in 
\cref{section_physics}, we discuss the relation of our main result,
\mbox{Theorem \ref{main_thm_ch3}}, with higher-genus mirror symmetry and some string-theoretic arguments.

\subsection*{Acknowledgements}
I would like to thank my supervisor Richard Thomas for continuous support and numerous discussions, suggestions and corrections. 

I thank Rahul Pandharipande, Tom Bridgeland, Liana Heuberger, Michel van Garrel and Bernd Siebert for various invitations to conferences and seminars where this work has been presented. I thank Mark Gross and Johannes Nicaise for corrections on a  first draft of this paper. I thank the referee for corrections and useful suggestions.

This work is supported by the EPSRC award 1513338, Counting curves in algebraic geometry, Imperial College London, and has benefited from the EPRSC [EP/L015234/1], EPSRC Centre for Doctoral Training in Geometry and Number Theory (The London School of Geometry and Number Theory), University College London.

\section{Basics and main results}
\label{section_basics}

\subsection{Looijenga pairs}
Let $(Y,D)$ be a Looijenga pair\footnote{We follow the terminology of Gross, Hacking and Keel
\cite{MR3415066}}: $Y$ is a smooth projective complex surface  
and $D$ is a singular reduced normal crossings anticanonical divisor on $Y$. Writing the irreducible components
\[ D=D_1+\dots +D_r\,, \] 
$D$ is a 
cycle of $r$ irreducible smooth rational curves 
$D_j$ if $r \geqslant 2$, or 
an irreducible nodal rational curve 
if $r=1$.
The complement 
$U \coloneqq Y-D$ is a non-compact Calabi-Yau surface, equipped with a holomorphic symplectic form 
$\Omega_U$, defined up to non-zero scaling and having first-order poles along $D$. 
We refer to 
\cite{MR632841,
friedman2015geometry,
MR3415066,
MR3314827}
for more background on 
Looijenga pairs.
 
There are two basic operations on 
Looijenga pairs:
\begin{itemize}
\item \emph{Corner blow-up}. If 
$(Y,D)$ is a Looijenga pair, then 
the blow-up 
$\tilde{Y}$ of $Y$ at one of the corners of $D$, equipped with the preimage 
$\tilde{D}$ of $D$, is a Looijenga pair.
\item \emph{Boundary blow-up}. If 
$(Y,D)$ is a Looijenga pair, 
then the blow-up
$\tilde{Y}$ of $Y$ at a smooth 
point of $D$, equipped with the strict transform 
$\tilde{D}$ of $D$, is a Looijenga 
pair.
\end{itemize}

A corner blow-up does not change the 
interior $U$ of a Looijenga pair 
$(Y,D)$.
An interior blow-up changes the interior
of a Looijenga pair:
if $(\tilde{Y},\tilde{D})$ is an interior blow-up of $(Y,D)$, then, for example, we have 
\[ e(\tilde{U})=e(U)+1\,,\]
where $U$ is the interior of 
$(Y,D)$,  
$\tilde{U}$
is the interior of $(\tilde{Y},
\tilde{D})$, and 
$e(-)$ denotes the topological Euler characteristic.

If $\bar{Y}$ is a smooth toric variety and $\bar{D}$ is its toric boundary divisor, then 
$(\bar{Y}, \bar{D})$ is a Looijenga pair, of interior 
$U=(\C^{*})^2$. In particular, we have
$e(U)=e((\C^{*})^2)=0$. Such Looijenga pairs are called toric. A Looijenga pair $(Y,D)$ is toric if and only if
its interior $U=Y-D$ has a vanishing Euler topological characteristic: $e(U)=0$. 

A toric model of a Looijenga pair
$(Y,D)$ is a toric Looijenga pair 
$(\bar{Y}, \bar{D})$ such that 
$(Y,D)$ is obtained from 
$(\bar{Y}, \bar{D})$ by successively applying a finite 
number of boundary blow-ups.

If $(Y,D)$ is a Looijenga pair, then, by
Proposition 1.3 of \cite{MR3415066}, 
there exists a Looijenga pair 
$(\tilde{Y},\tilde{D})$, obtained from 
$(Y,D)$ by successively applying a finite number of corner blow-ups, which admits a toric model. In particular, 
we have $e(U) \geqslant 0$, where 
$U$ is the interior of $(Y,D)$.

Let $(\bar{Y}, \bar{D})$ be a toric model of a Looijenga pair $(Y,D)$
of interior $U$. 
Let $\bar{\omega}$ be a torus invariant real symplectic form on 
$(\C^{*})^2= \bar{Y}-\bar{D}$. Then the corresponding 
moment map for the torus action gives 
$\bar{Y}$ the structure of 
toric fibration, whose restriction to 
$U$ is a smooth fibration in Lagrangian tori. By definition of a toric model,
we have a map $p \colon (Y,D) \rightarrow (\bar{Y}, \bar{D})$, composition of successive boundary blow-ups. Let $E_j$ denote the exceptional divisors, $j=1, \dots, e(U)$. Then for $\epsilon_j$ small enough positive real numbers, there exists a symplectic form 
$\omega$ in the class
\[ p^{*} [\bar{\omega}]
- \sum_{j=1}^{e(U)} 
\epsilon_j E_j\]
with respect to which $Y$ admits an almost toric fibration, whose restriction to 
$U$ is a fibration in Lagrangian tori 
with $e(U)$ nodal fibers 
\cite{MR3502098}.

Toric models of a given Looijenga pair are very far from being unique but are always related by sequences of corner blow-ups/blow-downs and boundary blow-ups/blow-downs. The corresponding 
almost toric fibrations are related by 
nodal trades 
\cite{MR2024634}.

Following \cite[\S 6.3]{MR3415066},
we say that $(Y,D)$ is positive if
one of the following equivalent conditions is satisfied.
\begin{itemize}
\item There exist positive integers $a_1,\dots,a_r$
such that, for all
\mbox{$1 \leqslant k \leqslant r$}, we have 
\[\left(\sum_{j=1}^r a_j D_j\right)\cdot D_k>0\,.\]
\item $U$ is deformation
equivalent to an affine surface.
\item
$U$ is the minimal resolution of
$\Spec (H^0(U,\cO_U))$, which is an affine surface with at worst Du Val singularities. 
\end{itemize}

\subsection{Tropicalization of Looijenga pairs}
\label{section_tropicalization}
We refer to 
\cite[\S 1.2 and 2.1]{MR3415066} 
and 
\cite[\S 1]{gross2016theta} 
for details.
Let 
$(Y,D)$ be a Looijenga pair. Let 
$D_1, \dots, D_r$ be the component of $D$, ordered 
in a cyclic order, the index $j$ of 
$D_j$ being considered modulo $r$.
For every $j$ modulo $r$,
we consider an integral affine cone 
$\sigma_{j,j+1}=(\R_{\geqslant 0})^2$,
of edges $\rho_j$ and $\rho_{j+1}$.
We abstractly glue together the cones 
$\sigma_{j-1,j}$ and 
$\sigma_{j,j+1}$ along the edge 
$\rho_j$. We obtain a topological 
space $B$, homeomorphic to 
$\R^2$, equipped with a cone decomposition 
$\Sigma$ in two-dimensional cones $\sigma_{j,j+1}$, all meeting at a point that we call $0 \in B$, and pairwise meeting along one-dimensional cones
$\rho_j$. The pair $(B, \Sigma)$ is the dual intersection complex of 
$(Y,D)$.
We define an integral linear
structure on 
$B_0=B-\{0\}$
by the charts 
\[ \psi_j \colon U_j 
\rightarrow \R^2 \,,\]
where
$U_j
\coloneqq 
\mathrm{Int}(\sigma_{j-1,j}\cup \sigma_{j,j+1})$ and $\psi_j$ is defined on the closure of $U_j$ by
\[ \psi_j(v_{j-1}) = (1,0)\,,
\psi_j(v_{j})=(0,1)
\,, \psi_j(v_{j+1})=(-1,-D_j^2) \,,\]
where $v_j$ is a primitive generator of $\rho_j$ and 
$\psi_j$
is defined linearly on the two-dimensional cones.
Let $\Lambda$ be the sheaf of integral tangent vectors of $B_0$.
It is a 
locally constant sheaf on $B_0$ of fiber 
$\Z^2$.

The integral linear structure 
on $B_0$ extends to $B$ through 
$0$ if and only if 
$(Y,D)$ is toric. In this case, 
$B$ can be identified with $\R^2$
as an integral linear manifold and 
$\Sigma$ is simply the fan of the 
toric variety $Y$.
In general, the integral linear structure is singular at $0$, with a non-trivial
monodromy along a loop going around $0$.

As $B_0$ is an integral linear manifold, its set 
$B_0(\Z)$ of integral points is well defined. We denote 
$B(\Z)\coloneqq B_0(\Z) \cup \{ 0\}$.
If $(Y,D)$ is toric, with 
$Y-D=(\C^{*})^2$, then 
$B(\Z)$ is the lattice of cocharacters 
of $(\C^{*})^2$, that is, the lattice of one-parameter subgroups 
$\C^{*} \rightarrow (\C^{*})^2$. 
Thus, intuitively, a point of $B_0(\Z)$ is a way to go to infinity in 
$(\C^{*})^2$.
This intuition remains true in the non-toric case: a point in 
$B_0(\Z)$ is a way to go to infinity in the interior $U$ of the pair 
$(Y,D)$. 

More precisely, if we equip 
$(Y,D)$ with its divisorial log structure, then $p \in B(\Z)$ defines a tangency condition along $D$ for a marked point $x$ on a stable log curve $f \colon C
\rightarrow (Y,D)$. If $p=0$, then ${f(x) \notin D}$. 
If $p = m_j v_j$,
$m_j \in \NN$, then $f(x) \in D_j$ with tangency order $m_j$ along 
$D_j$ and tangency order zero along 
$D_{j-1}$ and 
$D_{j+1}$.
If $p=m_j v_{j+1} 
+m_{j+1} v_{j+1}$,
$m_j, m_{j+1} \in \NN$, then 
$f(x) \in D_{j} \cap D_{j+1}$
with tangency order 
$m_j$ along 
$D_j$
and tangency order 
$m_{j+1}$
along 
$D_{j+1}$\footnote{This makes sense precisely because we are using 
log geometry.}.

Let $P$ be a toric 
monoid and $P^{\mathrm{gp}}$
be its group completion, a finitely generated abelian group.
Denote $P_\R^{\mathrm{gp}}
\coloneqq P^{\mathrm{gp}} \otimes_\Z \R$,
a finite-dimensional $\R$-vector space.  
Let $\varphi$ be a convex
$P_\R^{\mathrm{gp}}$-valued 
multivalued $\Sigma$-piecewise linear function on 
$B_0$.
Let $\Lambda_j$ be the fiber of
the sheaf $\Lambda$ of integral tangent 
vectors over the chart $U_j$.
Let $n_{j-1,j}, n_{j,j+1}
\in \Lambda_i^\vee \otimes P^{\mathrm{gp}}$ 
be the slopes of 
$\varphi|_{\sigma_{j-1,j}}$ and 
$\varphi|_{\sigma_{j,j+1}}$.
Let $\Lambda_{\rho_j}$ be the fiber of 
the sheaf of integral tangent 
vectors to the ray $\rho_j$.
Let $\delta_j \colon
\Lambda_j \rightarrow \Lambda_j /
\Lambda_{\rho_j} \simeq \Z$
be the quotient map. We fix signs 
by requiring $\delta_j$
to be non-negative on tangent vectors 
pointing from $\rho_j$
to $\sigma_{j,j+1}$.
Then 
$(n_{j,j+1}-n_{j-1,j})(\Lambda_{\rho_j})=0$
and hence there exists 
$\kappa_{\rho_j,\varphi} \in P$
with 
\[ n_{j,j+1}-n_{j-1,j} 
= \delta_j \kappa_{\rho_j,\varphi} \,, \]
called the kink of 
$\varphi$ along 
$\rho_j$.

Let 
$\B_{0, \varphi}$ be the 
$P_\R^{\mathrm{gp}}$-torsor,
which is set-theoretically
$B_0 
\times P_\R^{\mathrm{gp}}$
but with an integral affine structure
twisted by $\varphi$:
for each chart 
$\psi_j \colon U_j \rightarrow \R^2$
of $B_0$, we define
a chart on 
$\B_{0, \varphi}$ by
\[(x,p)\mapsto
\begin{dcases}(\psi_j(x), p)
 & \text{if  } x \in \sigma_{j-1,j} \\
(\psi_j(x), p + \tilde{\delta}_j(x)
\kappa_{\rho_j, \varphi}) & \text{if  } 
x \in \sigma_{j,j+1}\,,
\end{dcases}\]
where 
$\tilde{\delta}_j \colon
\sigma_{j,j+1} \rightarrow \R_{\geqslant 0}$
is the integral affine map
of differential $\delta_j$.
By definition, 
$\varphi$ can be viewed as a section of 
the projection
$\pi \colon \B_{0,\varphi} \rightarrow B_0$.
Then 
$\cP \coloneqq \varphi^{*}
\Lambda_{\B_{0,\varphi}}$
is
a locally constant sheaf 
on $\B_{0,\varphi}$, of 
fiber $\Z^2 \oplus 
P^{\gp}$, and the 
projection
$\pi \colon 
\B_{0,\varphi}
\rightarrow B_0$
induces a short exact sequence
\[ 0 \rightarrow 
\underline{P}^{\gp}
\rightarrow \cP
\xrightarrow{\mathfrak{r}} \Lambda 
\rightarrow 0 \]
of locally constant sheaves
on $B_0$, where 
$\underline{P}^{\gp}$
is the constant sheaf on 
$B_0$ of fiber 
$P^{\gp}$, and where $\mathfrak{r}$ is the derivative 
of $\pi$.

The sheaf $\Lambda$ is naturally a sheaf of symplectic lattices: we have a skew-symmetric non-degenerate form 
\[\langle -,- \rangle \colon \Lambda \otimes \Lambda \rightarrow \Z \,.\]
We extend $\langle -,- \rangle$
to a skew-symmetric form on 
$\cP$ of kernel $\underline{P}^\gp$.

Let $P$ be a toric 
monoid and let 
$\eta \colon NE(Y)
\rightarrow P$ be a morphism
of monoids.
Then there exists a unique (up to 
a linear function) 
convex
$P_\R^{\mathrm{gp}}$-valued 
multivalued $\Sigma$-piecewise linear
function $\varphi$ on 
$B_0$ with kinks 
$\kappa_{\rho_j,\varphi} = \eta([D_j])$. 

\subsection{Algebras and quantum algebras}

When we write `$A$ is an 
$R$-algebra', we mean that 
$A$ is an associative algebra with unit over a commutative ring with unit $R$. In particular, $R$ is naturally
contained in the center of 
$A$.
We fix $\kk$ an algebraically closed field
of characteristic zero and $i \in \kk$
a square root of $-1$.

For every monoid $M$\footnote{All the monoids considered will be commutative 
and with an identity element.} equipped with 
a skew-symmetric bilinear form 
\[ \langle -,- \rangle
\colon  M \times M\rightarrow \Z\,, \]
we denote by $\kk[M]$ the 
monoid algebra of $M$, consisting of monomials 
$z^m$, $m \in M$, such that 
$z^m \cdot z^{m'}=z^{m+m'}$. It is a Poisson algebra, of Poisson bracket determined by 
\[\{z^m, z^{m'}\}=\langle m,m' \rangle z^{m+m'} \,.\]

We denote by 
$\kk_q \coloneqq \kk[q^{\pm \frac{1}{2}}]$
and $\kk_{q}[M]$
the possibly non-commutative $\kk_{q}$-algebra structure on $\kk[M] \otimes_\kk \kk_{q}$ such that
\[ \hat{z}^m. \hat{z}^{m'}=q^{\frac{1}{2} \langle m,m'
\rangle} \hat{z}^{m+m'}\,.\]

We denote
$\kk_{\hbar}
\coloneqq \kk[\![\hbar]\!]$.
We view $\kk_{\hbar}$ as a complete 
topological ring for the 
$\hbar$-adic topology and in particular, we will use the operation of completed tensor product $\hat{\otimes}$ with $\kk_{\hbar}$:
\[ (-) \hat{\otimes}_\kk \kk_{\hbar}
\coloneqq \varprojlim_j \, (-)\otimes_\kk (\kk[\hbar]/\hbar^j) \,.\]
We view $\kk_{\hbar}$
as a $\kk_{q}$-module by the change of
variables 
\[q=e^{i\hbar}=\sum_{k \geqslant 0} \frac{(i\hbar)^k}{k!}\,.\]

We denote $\kk_{\hbar}[M]
\coloneqq \kk_q[M] \hat{\otimes}_{\kk_q}
\kk_{\hbar}$.
The possibly 
non-commutative algebra $\kk_{\hbar}[M]$
is a deformation quantization of the 
Poisson algebra $\kk[M]$
in the sense that
$\kk_{\hbar}[M]$ is flat as 
$\kk_{\hbar}$-module, we recover $\kk[M]$
in
the limit $\hbar \rightarrow 0$,
$q \rightarrow 1$, 
and the linear term
in $\hbar$
of the commutator 
$[\hat{z}^m, \hat{z}^{m'}]$
in $\kk_{\hbar}[M]$
is determined by the 
Poisson bracket 
$\{z^m, z^{m'}\}$ in 
$\kk[M]$:
\[ [\hat{z}^m, \hat{z}^{m'}]
= (q^{\frac{1}{2} \langle m, m' \rangle}
- 
q^{-\frac{1}{2} \langle m, m' \rangle})
\hat{z}^{m + m'}
= \langle m,m' \rangle i \hbar 
\hat{z}^{m+m'} + \cO(\hbar^2) \,. \]
We will often apply the constructions 
$\kk[M]$ and $\kk_{\hbar}[M]$ to $M$ a fiber of the locally constant sheaves $\Lambda$ or $\cP$.

In particular, considering the toric monoid $P$ with the zero 
skew-symmetric form,  
we denote 
\[ R \coloneqq \kk[P]\] 
and 
\[R^{\hbar} \coloneqq \kk_{\hbar}[P]
=R \hat{\otimes}_\kk \kk_{\hbar}\,.\]
For every monomial ideal $I$ of $R$, we denote 
\[R_I \coloneqq R/I\]
\[R_I^q \coloneqq R/I \otimes_\kk \kk_q
=R_I[q^{\pm \frac{1}{2}}]\]
and 
\[R_I^{\hbar} \coloneqq R^{\hbar}/I
=R_I \hat{\otimes}_\kk \kk_{\hbar}
=R_I [\![ \hbar ]\!]\,.\] 
Observe that the algebras $R^{\hbar}$,
$R_I^q$
and $R_I^{\hbar}$
are commutative.

\subsection{Ore localization}
\label{section_ore}

As should be clear from the previous section, we will be dealing with 
non-commutative rings. 
Unlike what happens for commutative rings, it is not possible in general to localize 
with respect to an arbitrary multiplicative subset of a 
non-commutative ring, because of left-right issues. These left-right issues are absent
by definition if the multiplicative subset satisfies the so-called Ore conditions.

We refer, for example, to 
\cite[\S 2.1]{MR1662244}
and   
\cite[\S 1.3]{ginzburg1998lectures}
for short presentations
of these elementary notions of non-commutative algebra.
A multiplicative subset $S
\subset A -\{0\}$ of an associative ring $A$ is said to satisfy the Ore conditions if
\begin{itemize}
\item for all $a \in A$ and $s \in S$, there exist
$b \in A$ and $t\in S$ such that\footnote{Informally, $as^{-1}
=t^{-1}b$, that is, every fraction with a denominator on the right can be rewritten as a fraction with a denominator on the left.} $ta=bs$;
\item for all $a \in A$, if there exists 
$s \in S$ such that $as=0$, then there exists 
$t \in S$ such that $ta=0$; 
\item for all $b \in A$ and $t \in S$, there exists 
$a \in A$ and $s \in S$ such that\footnote{Informally, 
$t^{-1}b=as^{-1}$, that is, every fraction with a denominator on the left can be rewritten as a fraction with a denominator on the right.} $ta=bs$;
\item for all $a \in A$, if there exists 
$s \in S$ such that $sa=0$, then there
exists $t \in S$ such that $at=0$.
\end{itemize}

If $S$ is a multiplicative subset of 
an associative ring $A$ and if $S$ satisfies the Ore conditions, then there is a well-defined localized ring $A[S^{-1}]$.

Let $R$ be a commutative ring. Denote 
$R^{\hbar} \coloneqq R[\![\hbar]\!]$.

\begin{lem} \label{lem_loc}
Let $A$ be an $R^{\hbar}$-algebra such that 
$A_0 \coloneqq A/\hbar A$ is a 
commutative $R$-algebra. Assume that $A$
is $\hbar$-nilpotent, that is, that there exists $j$ such that $\hbar^j A=0$.
Denote by $\pi \colon A \rightarrow A_0$ the natural projection. Let $\overline{S}
\subset A_0 -\{0\}$ be a multiplicative subset. Then the multiplicative subset 
$S \coloneqq \pi^{-1}(\overline{S})$
of $A$ satisfies the Ore conditions.
\end{lem}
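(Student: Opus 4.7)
The plan is to exploit that $A_0 = A/\hbar A$ being commutative forces every commutator in $A$ to lie in $\hbar A$: for any $s \in S$, the operator $\mathrm{ad}_s = [s,-]$ sends $A$ into $\hbar A$. Iterating, $(\mathrm{ad}_s)^k(A) \subseteq \hbar^k A$, which vanishes for $k \geq j$ by $\hbar$-nilpotency, so $\mathrm{ad}_s$ is a nilpotent derivation of $A$. In all four Ore conditions I will take $t = s^j$; note $t \in S$ because $\overline{S}$ is multiplicative, so $\overline{s}^j \in \overline{S}$.

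For the first Ore condition, given $a \in A$, I would prove by induction on $k \geq 0$ the identity
\[ s^k a = b_k\, s + \hbar^k r_k \]
for some $b_k, r_k \in A$, with base case $b_0 = 0$, $r_0 = a$. For the step, left-multiplying by $s$ gives $s b_k s + \hbar^k s r_k$; writing $[s, b_k] = \hbar c_k$ and $[s, r_k] = \hbar d_k$ (possible since $\mathrm{ad}_s(A) \subseteq \hbar A$), this rearranges to $(b_k s + \hbar c_k + \hbar^k r_k)\,s + \hbar^{k+1} d_k$, boosting the error from $\hbar^k$ to $\hbar^{k+1}$. At $k = j$ the remainder vanishes, so $s^j a = b s$ as required. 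The third Ore condition follows by a symmetric argument using right-multiplication.

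For the second Ore condition, assuming $as = 0$, I would first establish by induction on $k$ that $(\mathrm{ad}_s)^k(a) \cdot s = 0$: the base case is $as = 0$, and the step expands $(\mathrm{ad}_s)^{k+1}(a) \cdot s = s \cdot (\mathrm{ad}_s)^k(a) \cdot s - (\mathrm{ad}_s)^k(a) \cdot s^2$, both of which vanish by the inductive hypothesis. This identity in turn yields $s \cdot (\mathrm{ad}_s)^k(a) = (\mathrm{ad}_s)^{k+1}(a)$, and a second induction then gives $s^k a = (\mathrm{ad}_s)^k(a)$. Setting $k = j$ and invoking nilpotency of $\mathrm{ad}_s$ delivers $s^j a = 0$. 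The fourth condition is again symmetric. The one point that requires care is that $A$ may have $\hbar$-torsion, so one cannot naively divide by $\hbar$ to define ``the'' quotient $[s,a]/\hbar$; however, every step of the argument above stays inside $A$ and only ever multiplies by $\hbar$, so no flatness assumption is needed and this potential obstacle is harmless.
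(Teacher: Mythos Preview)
Your argument is correct. The paper itself does not prove this lemma at all: it simply refers the reader to the proof of Proposition~2.1.5 in Kapranov's paper \cite{MR1662244}. Your write-up supplies precisely the kind of direct verification that reference encodes: the key mechanism in both cases is that commutators lie in the nilpotent two-sided ideal $\hbar A$, so iterated commutators with a fixed $s\in S$ eventually vanish, and a finite inductive correction produces the required $t=s^{j}$. Your treatment of the torsion conditions (second and fourth Ore conditions) via the identity $s\cdot(\mathrm{ad}_s)^k(a)=(\mathrm{ad}_s)^{k+1}(a)$ when $as=0$ is clean, and your closing remark that the argument never divides by $\hbar$---only lifts elements of $\hbar A$ to $A$ non-uniquely---correctly identifies why no flatness hypothesis is needed. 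In short, you have written out what the paper leaves as a citation; there is nothing to correct.
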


\begin{proof}
See the proof of Proposition 2.1.5 in 
\cite{MR1662244}.
\end{proof}
 
\begin{defn} \label{def_loc}
Let $A$ be an $R^{\hbar}$-algebra such that 
$A_0 \coloneqq A/\hbar A$ is a commutative 
$R$-algebra. Assume that $A$ is $\hbar$-complete, that is, that $A = \varprojlim_j
A/\hbar^j A$. By Lemma 
\ref{lem_loc}, each $A/\hbar^j A$ 
defines a sheaf of algebras on 
$X_0 \coloneqq \Spec A_0$, which we denote
by $\cO_{X_0}^{\hbar}/\hbar^j$.
We define 
\[ \cO_{X_0}^{\hbar}
\coloneqq \varprojlim_j \cO_{X_0}^{\hbar}/\hbar^j
\,,\]
which is a sheaf in $R^{\hbar}$-algebras over $X_0$ such that $\cO_{X_0}^{\hbar}/\hbar
=\cO_{X_0}$.
\end{defn}

Definition 
\ref{def_loc} gives us a systematic way to turn certain non-commutative algebras into 
sheaves of non-commutatives algebras. 

\subsection{The Gross-Hacking-Keel mirror family} \label{subsection_ghk}

We fix $(Y,D)$ a Looijenga pair.
Let $NE(Y)_\R \subset A_1(Y,\R)$ be the cone generated by curve classes and let $NE(Y)$ be the monoid 
$NE(Y)_\R \cap A_1(Y,\Z)$.

Let 
$\sigma_P \subset A_1(Y,\R)$ be a strictly 
convex polyhedral cone containing 
$NE(Y)_\R$.
Let $P \coloneqq \sigma_P \cap A_1(Y,\Z)$ be the associated monoid and let
$R \coloneqq \kk[P]$ be the corresponding $\kk$-algebra. We denote 
$t^\beta$ the monomial in $R$ defined by $\beta \in P$. Let $\fm_R$ be the maximal monomial ideal of $R$.
For every monomial ideal 
$I$ of $R$ with radical $\fm_R$,
we denote $R_I\coloneqq R/I$ and 
$S_I\coloneqq \Spec R_I$.

Let $T^D \coloneqq \mathbb{G}_m^r$
be the torus whose character group has a basis 
$e_{D_j}$ indexed by the irreducible
components $D_j$ of $D$. The map 
\[\beta \mapsto \sum_{j=1}^r (\beta \cdot D_j)
e_{D_j}\] 
induces an action of 
$T^D$ on $S_I$.

\cite[Theorem 0.1]{MR3415066} 
gives the existence of a flat
$T^D$-equivariant morphism 
\[ X_I \rightarrow S_I \,,\]
with $X_I$ affine.
The algebra of functions of $X_I$ is given as 
$R_I$-module by 
\[H^0(X_I, \cO_{X_I}) =A_I \coloneqq \bigoplus_{p \in B(\Z)} R_I \vartheta_p \,,\]
The algebra structure on
$H^0(X_I, \cO_{X_I})$ is determined by genus-0 log Gromov--Witten
invariants of $(Y,D)$.

By Theorem 0.2. of \cite{MR3415066}, there exists a unique smallest radical monomial ideal $J_{\min} \subset R$ such that the following statements hold.
\begin{itemize}
\item For every monomial ideal $I$ of $R$
of radical containing $J_{\min}$, there is a 
finitely generated $R_I$-algebra structure on $A_I$ compatible with the 
$R_{I+\fm^N}$-algebra structure on 
$A_{I+\fm^N}$ given by \mbox{Theorem 0.1} of
\cite{MR3415066} for all $N>0$.
\item The zero locus $V(J_{\min})\subset \Spec R$
contains the union of the closed toric strata corresponding to faces $F$ of 
$\sigma_P$ such that there exists 
$1 \leqslant j \leqslant r$ such that 
$[D_j] \notin F$. If $(Y,D)$ is positive, then 
$J_{\min}=0$ and $V(J_{\min})=\Spec R$.
\item Let $\hat{R}^{J_{\min}}$ denote the $J_{\min}$-adic completion of $R$. The algebras $A_I$ determine a $T^D$-equivariant formal flat family of affine surfaces 
$\cX^{J_{\min}} \rightarrow \Spf \hat{R}^{J_{\min}}$.
The theta functions $\vartheta_p$
determine a canonical embedding 
$\cX^{J_{\min}} \subset \mathbb{A}^{\max(r,3)}
\times \Spf \hat{R}^{J_{\min}}$.
In particular, if $(Y,D)$ is positive, then we get an algebraic family 
$\cX \rightarrow \Spec R$ and the theta functions
$\vartheta_p$ 
determine a canonical embedding 
$\cX \subset \A^{\max(r,3)}
\times \Spec R$. 
\end{itemize}

\subsection{Deformation quantization}

We now discuss the notion of deformation quantization. There are two technical aspects to keep in mind: first, we work relative to a non-trivial base; and 
second, we work in general with formal schemes. We refer to \cite{MR1855264}, \cite{MR2183259}, \cite{MR2119140}, 
for general facts about deformation
quantization in algebraic geometry.

\begin{defn} \label{def_poisson}
A Poisson scheme over a scheme $S$ is a scheme 
$\pi \colon X \rightarrow S$ over $S$, equipped with a 
$\pi^{-1} \cO_S$-bilinear Poisson bracket, 
that is, a 
$\pi^{-1} \cO_S$-bilinear skew-symmetric
map of sheaves
\[\{-,-\} \colon \cO_X \times \cO_X
\rightarrow \cO_X\,,\]
which is a biderivation
\[\{a,bc\}=\{a,b\}c+\{a,c\}b \,,\]
and a Lie bracket 
\[ \{a,\{b,c\}\}+
\{b,\{c,a\}\}
+\{c,\{a,b\}\}
=0 \,.\]
\end{defn}

The two definitions below give two notions of deformation quantization of a Poisson scheme.

\begin{defn} \label{def_quant_sheaf}
Let $\pi \colon (X,\{-,-\})
\rightarrow S$ be a Poisson scheme 
over a scheme $S$.
A \emph{deformation quantization}
of 
$(X,\{-,-\})$
over $S$
is a sheaf $\cO_X^{\hbar}$
of associative 
flat $\pi^{-1}
\cO_S \hat{\otimes} \kk_{\hbar}$-algebras on $X$, complete
in the $\hbar$-adic topology, 
equipped with an isomorphism
$\cO_X^{\hbar} / \hbar 
\cO_X^{\hbar}
\simeq  \cO_{X}$,
such that for every $f$ and 
$g$ in $\cO_X$, and lifts
$\tilde{f}$ and
$\tilde{g}$ of $f$ and $g$
in $\cO_X^{\hbar}$, we have 
\[ [\tilde{f},\tilde{g}]
=i\hbar \{f,g\} \mod \hbar^2 \,,\]
 where 
$[\tilde{f},
\tilde{g}]\coloneqq \tilde{f}
\tilde{g}-\tilde{g}\tilde{f}$ is the commutator in 
$\cO_X^{\hbar}$.
\end{defn}

\begin{defn} \label{def_quant_affine}
Let $\pi \colon (X,\{-,-\})
\rightarrow S$ be a
Poisson scheme over a scheme $S$.
Assume that both 
$X$ and $S$ are affine.
A \emph{deformation quantization} of
$(X,\{-,-\})$ over $S$  
is a 
flat 
$H^0(S,
\cO_S) 
\hat{\otimes }\kk_{\hbar}$-algebra $A$, complete
in the $\hbar$-adic topology, 
equipped with an isomorphism
$A / \hbar A \simeq  H^0(X, \cO_{X})$,
such that for every $f$ and 
$g$ in $H^0(X,\cO_X)$,
and lifts $\tilde{f}$ and 
$\tilde{g}$ 
of $f$ and $g$ in $A$,
we have 
\[[\tilde{f},\tilde{g}]
=i\hbar \{f,g\} \mod \hbar^2 \,,\]
 where 
$[\tilde{f},
\tilde{g}]
\coloneqq \tilde{f} \tilde{g}-\tilde{g}
\tilde{f}$ is the commutator in 
$A$.
\end{defn}

The compatibility of these two definitions is guaranteed by the following lemma.

\begin{lem} \label{lem_comp_affine}
When both $X$ and $S$ are affine, the notions of deformation quantization given by Definitions \ref{def_quant_sheaf} and \ref{def_quant_affine} are equivalent. 
\end{lem}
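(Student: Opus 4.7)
The plan is to exhibit mutually inverse constructions between the two kinds of objects, using Ore localization (Definition \ref{def_loc}) in one direction and global sections in the other.

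Going from a sheaf $\cO_X^{\hbar}$ as in Definition \ref{def_quant_sheaf} to an algebra, I would set
\[A \coloneqq H^0(X,\cO_X^{\hbar}) = \varprojlim_j H^0(X,\cO_X^{\hbar}/\hbar^j\cO_X^{\hbar}).\]
The key input is that $X$ is affine. By flatness over $\pi^{-1}\cO_S\hat{\otimes}\kk_{\hbar}$, each successive quotient $\hbar^k\cO_X^{\hbar}/\hbar^{k+1}\cO_X^{\hbar}$ is isomorphic as an $\cO_X$-module to $\cO_X$, hence is quasi-coherent. A finite induction on $j$, using the short exact sequences
\[0 \to \hbar^{j-1}\cO_X^{\hbar}/\hbar^j\cO_X^{\hbar} \to \cO_X^{\hbar}/\hbar^j\cO_X^{\hbar} \to \cO_X^{\hbar}/\hbar^{j-1}\cO_X^{\hbar} \to 0\]
and the vanishing of $H^{>0}$ of quasi-coherent sheaves on the affine scheme $X$, shows that each $\cO_X^{\hbar}/\hbar^j\cO_X^{\hbar}$ has vanishing higher cohomology and that $A/\hbar^j A = H^0(X,\cO_X^{\hbar}/\hbar^j\cO_X^{\hbar})$. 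In particular $A$ is $\hbar$-adically complete, $A/\hbar A \simeq H^0(X,\cO_X)$, the commutator condition passes from stalks to global sections, and flatness of $A$ over $H^0(S,\cO_S)\hat{\otimes}\kk_{\hbar}$ follows from the flatness of each graded piece.

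Conversely, given $A$ as in Definition \ref{def_quant_affine}, I would apply Definition \ref{def_loc} level by level. For each $j$, the algebra $A_j \coloneqq A/\hbar^j A$ is an $\hbar$-nilpotent $H^0(S,\cO_S)\hat{\otimes}\kk_{\hbar}$-algebra whose reduction mod $\hbar$ is the commutative ring $H^0(X,\cO_X)$; Lemma \ref{lem_loc} then gives, for every multiplicative subset of $H^0(X,\cO_X)$, a well-defined Ore localization of $A_j$, producing a sheaf $\cO_X^{\hbar}/\hbar^j$ of associative $\pi^{-1}\cO_S\hat{\otimes}(\kk_{\hbar}/\hbar^j)$-algebras on $X = \Spec H^0(X,\cO_X)$ with $H^0(X,\cO_X^{\hbar}/\hbar^j) = A_j$. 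Taking the inverse limit yields
\[\cO_X^{\hbar} \coloneqq \varprojlim_j \cO_X^{\hbar}/\hbar^j,\]
a sheaf of $\pi^{-1}\cO_S\hat{\otimes}\kk_{\hbar}$-algebras. Flatness, the identification $\cO_X^{\hbar}/\hbar\cO_X^{\hbar} \simeq \cO_X$, and the commutator condition modulo $\hbar^2$ are inherited from the corresponding properties of $A$ because Ore localization is exact on $\hbar$-nilpotent algebras.

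Finally, I would check that the two constructions are inverse to each other. Starting from $A$, the sheaf $\cO_X^{\hbar}$ built above satisfies $H^0(X,\cO_X^{\hbar}/\hbar^j) = A_j$ by construction, and passing to the limit gives back $A$. Starting from $\cO_X^{\hbar}$, Ore-localizing $A = H^0(X,\cO_X^{\hbar})$ along a distinguished open $D(f) \subset X$ yields the same algebra as the sections $\cO_X^{\hbar}(D(f))$ at each finite level, because both are characterized by the universal property of inverting the preimage of $f \in H^0(X,\cO_X)$ in a $\hbar$-nilpotent algebra; a basis argument then identifies the two sheaves. The only genuine subtlety is the interchange of $\varprojlim_j$ with $H^0(X,-)$, which is where the affineness of $X$ is essential; the rest of the argument is a direct translation between the sheaf-theoretic and algebraic languages.
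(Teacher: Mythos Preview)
Your proposal is correct and follows exactly the approach sketched in the paper: global sections in one direction, Ore localization (Definition \ref{def_loc}) in the other. The paper's own proof is a two-line sketch stating precisely this, so your write-up is a faithful and more detailed elaboration of it.
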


\begin{proof}
One goes from a sheaf quantization to an algebra quantization by taking
global sections. One goes from an algebra quantization to a sheaf quantization by Ore localization; see
\cref{section_ore}.
\end{proof}

Definitions 
\ref{def_quant_sheaf}
and \ref{def_quant_affine}
and Lemma \ref{lem_comp_affine} 
have obvious 
analogues if one replaces schemes by formal schemes\footnote{or, in fact, any locally ringed space.}.

\subsection{Main results}
We fix $(Y,D)$ a Looijenga pair 
and we use notation introduced 
in
\cref{subsection_ghk}.
Our main result, Theorem
\ref{thm_1_ch3}, is the construction of a deformation quantization of the Gross-Hacking-Keel mirror family by a higher-genus 
deformation of the Gross-Hacking-Keel
construction.

\begin{thm} \label{thm_1_ch3}
Let $I$ be a monomial ideal of $R$
with radical $\fm_R$. Then there exists 
a flat
$T^D$-equivariant $R_I^{\hbar}$-algebra $A_I^{\hbar}$, such that 
$A_I^{\hbar}$ is a deformation quantization over 
$S_I$ of the Gross-Hacking-Keel mirror family $X_I \rightarrow S_I$, 
and $A_I^{\hbar}$ is given as  
$R_I^{\hbar}$-module by 
\[ A_I^{\hbar} = \bigoplus_{p \in B(\Z)}
R_I^{\hbar} \hat{\vartheta}_p \,,\]
where the algebra structure is determined by higher-genus log Gromov--Witten invariants of $(Y,D)$, with genus expansion parameter identified with the quantization parameter $\hbar$.
\end{thm}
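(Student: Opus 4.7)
The plan is to follow the Gross-Hacking-Keel strategy step by step, but everywhere replacing genus zero log Gromov-Witten invariants by their $\hbar$-refined higher genus analogues and replacing the classical scattering calculus by the quantum calculus developed in Section \ref{section_scattering}.

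First, I would associate to $(Y,D)$ (and to the chosen toric monoid $P$ with kink function $\varphi$) a canonical quantum scattering diagram $\mathfrak{D}^q_{(Y,D)}$ on $B_0$, whose rays and initial wall-functions are defined as generating series of higher genus log Gromov-Witten invariants of $(Y,D)$. Specializing at $q=1$ (equivalently $\hbar=0$) must recover the Gross-Hacking-Keel scattering diagram of \cite{MR3415066}. The main technical input is Theorem \ref{thm_consistency}, asserting the consistency of $\mathfrak{D}^q_{(Y,D)}$ modulo any monomial ideal $I$ of radical $\fm_R$: its proof (carried out in Section \ref{section_canonical}) reduces by corner blow-ups to the toric-model situation, where consistency is supplied by the main theorem of \cite{bousseau2018quantum_tropical}, playing the role of \cite{MR2667135} in the classical story. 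This consistency step is the main obstacle, and everything else essentially unwinds formally from it.

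Granted consistency modulo $I$, quantum broken lines are well-defined and their endpoint contributions give elements $\hat{\vartheta}_p \in \kk_\hbar[\cP]$ for every $p \in B(\Z)$, which I take as the generators of the free $R_I^\hbar$-module
\[ A_I^\hbar \coloneqq \bigoplus_{p \in B(\Z)} R_I^\hbar \, \hat{\vartheta}_p \,. \]
The multiplication is defined by the familiar broken-line formula
\[ \hat{\vartheta}_{p_1} \cdot \hat{\vartheta}_{p_2} \coloneqq \sum_{p \in B(\Z)} C^{\hbar}_{p_1,p_2,p}\, \hat{\vartheta}_p \,, \]
where $C^{\hbar}_{p_1,p_2,p} \in R_I^\hbar$ counts pairs of quantum broken lines with prescribed asymptotics $p_1, p_2$ and common endpoint of charge $p$, the noncommutativity arising from the $q$-twisted monoid algebra $\kk_q[\cP]$. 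Independence of the auxiliary basepoint, and associativity of the product, both reduce to the consistency of $\mathfrak{D}^q_{(Y,D)}$ via the standard path-ordered-product argument transported into the quantum setting of Section \ref{section_scattering}.

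It remains to identify this structure as a deformation quantization of the Gross-Hacking-Keel family $X_I \to S_I$. Reducing modulo $\hbar$ sets $q=1$, collapses the quantum scattering diagram to the classical one, and turns each quantum broken line into an ordinary one; consequently $A_I^\hbar / \hbar A_I^\hbar$ is canonically isomorphic, as $R_I$-algebra, to the Gross-Hacking-Keel algebra $A_I$. The relation
\[ [\hat{z}^m,\hat{z}^{m'}] = (q^{\frac{1}{2}\langle m,m' \rangle} - q^{-\frac{1}{2}\langle m,m' \rangle})\, \hat{z}^{m+m'} = i\hbar \langle m,m'\rangle\, \hat{z}^{m+m'} \mod \hbar^2 \]
in $\kk_\hbar[\cP]$ propagates through the broken line formula and yields $[\tilde f,\tilde g] = i\hbar \{f,g\} \mod \hbar^2$ on $A_I^\hbar$, matching Definition \ref{def_quant_affine}. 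Flatness of $A_I^\hbar$ over $R_I^\hbar$ follows from the fact that it is a free $R_I^\hbar$-module, and $T^D$-equivariance is inherited from the natural grading of each $\hat{\vartheta}_p$ by the tangency class determined by $p \in B(\Z)$ (just as in \cite{MR3415066}), since the higher genus log Gromov-Witten invariants respect this grading. The deformation-theoretic refinements (algebraicity of the $q$-dependence, extension over $J_{\min}$) will then be addressed in Theorems \ref{thm_2_ch3} and \ref{thm_q}.
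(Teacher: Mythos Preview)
Your proposal is correct and follows essentially the same approach as the paper: build the canonical quantum scattering diagram from higher genus log Gromov--Witten invariants, invoke Theorem~\ref{thm_consistency} for consistency (whose proof reduces via corner blow-ups and toric models to \cite{bousseau2018quantum_tropical}), use quantum broken lines to define the $\hat{\vartheta}_p$ and the structure constants, and then verify flatness, the classical limit, and $T^D$-equivariance.

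One point worth sharpening: you describe associativity as following from consistency ``via the standard path-ordered-product argument''. In the paper, the primary route to associativity (when $r \geqslant 3$) is not a direct verification of this sort but rather the realization of $A_I^{\hbar}$ as the algebra of global sections of the sheaf $\cO_{X_{I,\fD}^\circ}^{\hbar}$ obtained by gluing the local pieces $R_{\sigma,I}^{\hbar}$, $R_{\rho,I}^{\hbar}$ along the quantum scattering automorphisms (Theorem~\ref{thm_construction_ch3}); associativity is then automatic, and the nontrivial content becomes showing that the $\hat{\vartheta}_p$ form an $R_I^{\hbar}$-basis of these global sections, which is done by an $S_2$-type flatness argument lifting the case $I=J$. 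The broken-line product formula (Theorem~\ref{thm_product}) is then \emph{derived} from this description. For $r \leqslant 2$ the paper does proceed as you suggest, defining the product directly by the broken-line formula (Proposition~\ref{prop_algebra}) and reducing associativity to the $r \geqslant 3$ case via a corner blow-up. Your outline is compatible with all of this, but be aware that the sheaf-theoretic gluing is doing real work that a purely combinatorial ``path-ordered-product'' mantra does not obviously replace.
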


Taking the limit over all monomial ideals $I$ of $R$ with radical $\fm_R$, we get a deformation quantization of the formal family
\[\varinjlim_I X_I \rightarrow 
\varinjlim_I S_I \,.\]

The following theorem is 
a quantum version of
\cite[Theorem 0.2]{MR3415066}.

\begin{thm}\label{thm_2_ch3}
There is a unique smallest radical 
monomial $J_{\min}^{\hbar}
\subset R$ such that the following statements hold.
\begin{itemize}
\item For every monomial ideal $I$ of $R$ of radical containing $J_{\min}^{\hbar}$, there is a finitely generated $R_I^{\hbar}$-algebra structure on 
\[A_I^{\hbar}=\bigoplus_{p \in B(\Z)}R_I^{\hbar} \hat{\vartheta}_p \,,\] compatible with the 
$R_{I+\fm_R^k}^{\hbar}$-algebra structure on $A_{I+\fm_R^k}^{\hbar}$ given by Theorem 
\ref{thm_1_ch3} for all $k>0$.
\item The zero locus $V(J_{\min})
\subset R$ contains the union of the closed toric strata corresponding to faces $F$
of $\sigma_P$ such that there exists 
$1\leqslant j \leqslant r$ such that 
$[D_j]\notin F$. If $(Y,D)$ is positive, then $J_{\min}^{\hbar}=0$, that is,
$V(J_{\min}^{\hbar})=\Spec R$
and $A_0^{\hbar}$ is a deformation quantization of the mirror family 
$\cX \rightarrow \Spec R$.
\end{itemize} 
\end{thm}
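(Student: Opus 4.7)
The plan is to follow the reduction steps in the proof of Theorem 0.2 of \cite{MR3415066}, substituting at each stage the quantum scattering diagram and quantum broken lines constructed in Sections \ref{section_scattering}--\ref{section_canonical} for their genus zero counterparts. Theorem \ref{thm_1_ch3} already produces, for each monomial ideal $I$ with radical $\fm_R$, a flat $R_I^{\hbar}$-algebra $A_I^{\hbar}$; passing to the inverse limit over such $I$ yields a formal deformation quantization over $\Spf \hat{R}^{\fm_R}$. What remains is to identify the largest locus over which this formal algebra descends to a finitely generated $R_I^{\hbar}$-algebra, to show that the complement of this locus is cut out by a radical monomial ideal, and to trivialize this ideal in the positive case.

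First, I would define $J_{\min}^{\hbar}$ as the intersection of all radical monomial ideals $J \subset R$ with the following property: for every monomial ideal $I$ of $R$ with $\sqrt{I} \supseteq J$, the structure constants
\[ \hat{\vartheta}_p \cdot \hat{\vartheta}_q = \sum_{r \in B(\Z)} \hat{C}_{pq}^r \, \hat{\vartheta}_r \]
produced by quantum broken lines lie in $R_I^{\hbar}$, make $A_I^{\hbar}$ a finitely generated $R_I^{\hbar}$-algebra, and are compatible for all $k > 0$ with the algebra structure on $A_{I + \fm_R^k}^{\hbar}$ supplied by Theorem \ref{thm_1_ch3}. The family of such $J$ is closed under intersection, so $J_{\min}^{\hbar}$ is a well-defined radical monomial ideal satisfying the first bullet.

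Next, to show that $V(J_{\min}^{\hbar})$ contains the prescribed toric strata, I would use the fact that the quantum broken lines constructed in Section \ref{section_canonical} are supported on exactly the same curve classes $\beta \in NE(Y)$ as the classical broken lines of \cite{MR3415066}, with contributions that are Laurent polynomials in $q^{\pm 1/2}$ via the main theorem of \cite{bousseau2018quantum_tropical}. Hence the support of each $\hat{C}_{pq}^r$ in the monoid $P$ coincides with the support of the corresponding classical coefficient $C_{pq}^r$. The classical obstruction argument of \cite{MR3415066}, showing that finiteness fails over a stratum when some $[D_j]$ does not lie in $F$ because unbounded families of curve classes contribute to $\hat{\vartheta}_{v_j} \cdot \hat{\vartheta}_{-v_j}$, transports without change to the quantum setting and forces the required toric strata to lie in $V(J_{\min}^{\hbar})$.

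Finally, for the positive case I would exhibit an embedding $\cX_0^{\hbar} \hookrightarrow \A^{\max(r,3)} \times \Spec R^{\hbar}$ via the quantum theta functions $\hat{\vartheta}_{v_j}$, generating relations coming from the quantum multiplication rule. Positivity provides a strictly convex multivalued piecewise linear function $\varphi$ whose kinks lie in $P^\times$, which (as in Lemmas 6.10--6.11 of \cite{MR3415066}) bounds, for each $p, q, r$, the curve classes contributing to $\hat{C}_{pq}^r$ to a finite set. Combined with the support identification above, this gives $\hat{C}_{pq}^r \in R^{\hbar}$, polynomial relations among finitely many quantum theta functions, and $J_{\min}^{\hbar} = 0$; algebraicity of the structure constants in $q = e^{i\hbar}$ then follows from \cite{bousseau2018quantum_tropical}. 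The main obstacle is precisely the support comparison between quantum and classical broken lines: once this is in hand, the combinatorial/finiteness arguments of \cite{MR3415066} apply verbatim, and indeed yield $J_{\min}^{\hbar} = J_{\min}$.
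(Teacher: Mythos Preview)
Your definition of $J_{\min}^{\hbar}$ as an intersection of radical monomial ideals is fine and matches the paper's Proposition~\ref{prop_ideal}. The problems lie in your treatment of the second bullet.

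First, the support comparison you rely on is not available, and the paper explicitly flags it as open. You assert that the support of each quantum structure constant $\hat{C}_{pq}^r$ in $P$ coincides with that of the classical $C_{pq}^r$, and from this you conclude $J_{\min}^{\hbar}=J_{\min}$. But the quantum structure constants involve higher genus invariants $N_{g,\beta}^{Y/D}$ for all $g$, and there is no reason a priori why the nonvanishing locus of $\sum_{g}N_{g,\beta}^{Y/D}\hbar^{2g}$ in $\beta$ should coincide with that of $N_{0,\beta}^{Y/D}$. The result of \cite{bousseau2018quantum_tropical} gives $q$-integrality, not a support comparison. The paper's Remark after Proposition~\ref{prop_I_min} states precisely that whether $J_{\min}=J_{\min}^{\hbar}$ holds in general is unknown.

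Second, your argument for the containment of toric strata in $V(J_{\min}^{\hbar})$ has the logic reversed. You describe an ``obstruction argument'' showing that ``finiteness fails over a stratum when some $[D_j]$ does not lie in $F$'', but the theorem asserts that such strata \emph{are} contained in $V(J_{\min}^{\hbar})$, i.e.\ that finiteness \emph{holds} there. The paper's proof (Proposition~\ref{prop_I_min}) does not proceed via any curve-class obstruction; the key input is the $T^D$-equivariance of $A_I^{\hbar}$ established in Proposition~\ref{prop_torus}, which then allows the argument of Proposition~6.6 of \cite{MR3415066} to go through verbatim. You should replace the support-comparison strategy with this equivariance argument; it handles both the stratum containment and the positive case uniformly, without ever needing to compare quantum and classical supports.
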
 

The following result controls the dependence 
in $\hbar$ of the deformation quantization given by Theorem 
\ref{thm_2_ch3}: this dependence is algebraic 
in $q=e^{i\hbar}$.

\begin{thm} \label{thm_q}
Let $I$ be a monomial ideal of $R$
with radical containing $J_{\min}^{\hbar}$. Then there exists 
a flat $R_I^q$-algebra $A_I^q$
such that
\[ A_I^{\hbar} = 
A_I^q \hat{\otimes}_{\kk_q} \kk_{\hbar} \,,\]
where
$\kk_{\hbar}$ is viewed as a $\kk_q$-module via $q=e^{i \hbar}$.
\end{thm}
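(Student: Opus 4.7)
The plan is to retrace the construction of $A_I^{\hbar}$ from Theorems \ref{thm_1_ch3} and \ref{thm_2_ch3}, but to keep all data at the $\kk_q$-level rather than immediately extending to $\kk_{\hbar}$ via $q=e^{i\hbar}$. Recall that the $R_I^{\hbar}$-algebra structure on $A_I^{\hbar}$ is determined by structure constants $c_{p,p'}^r \in R_I^{\hbar}$ appearing in the expansions $\hat{\vartheta}_p\cdot\hat{\vartheta}_{p'} = \sum_r c_{p,p'}^r \hat{\vartheta}_r$, and these constants are obtained as finite sums (finite after reduction modulo $I$) of products of wall-function coefficients read along quantum broken lines in the canonical quantum scattering diagram of Section~\ref{section_canonical}. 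Each wall-function is itself a generating series of higher genus log Gromov-Witten invariants of $(Y,D)$ with $\hbar$ as genus-expansion parameter.

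The crucial input is the main theorem of \cite{bousseau2018quantum_tropical}, which states that, under the substitution $q = e^{i\hbar}$, every wall-function in the canonical quantum scattering diagram is a Laurent polynomial in $q^{\pm 1/2}$ with coefficients in $R$, via its identification with the refined tropical counts of \cite{bousseau2017tropical}. Consequently each structure constant $c_{p,p'}^r$ already lies in $R_I^q$, viewed as a subring of $R_I^{\hbar}$ via the injective change-of-variables map $q^{1/2}\mapsto e^{i\hbar/2}$.

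I would then \emph{define} $A_I^q \coloneqq \bigoplus_{p\in B(\Z)} R_I^q \hat{\vartheta}_p$ as a free (hence flat) $R_I^q$-module, with multiplication given by the refined structure constants above; $T^D$-equivariance is immediate from the construction. The associativity and unit identities for $A_I^q$ are relations in $R_I^q$ which become the associativity and unit identities for $A_I^{\hbar}$ after scalar extension along the injection $R_I^q \hookrightarrow R_I^{\hbar}$, and the latter already hold by Theorems \ref{thm_1_ch3} and \ref{thm_2_ch3}; by injectivity, they hold in $R_I^q$ as well. Finally the identification $R_I^{\hbar} = R_I^q \hat{\otimes}_{\kk_q}\kk_{\hbar}$ follows formally from the definitions of $\kk_q$, $\kk_{\hbar}$, and $R_I^{\hbar}$; combined with freeness of $A_I^q$ it yields $A_I^{\hbar} = A_I^q\hat{\otimes}_{\kk_q}\kk_{\hbar}$ as $R_I^{\hbar}$-algebras, the multiplication agreement being built into the construction.

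The main obstacle is concentrated entirely in the polynomiality step: a formal power series in $\hbar$ has a priori no reason to become algebraic after the substitution $q = e^{i\hbar}$, and no general deformation-theoretic argument can produce such algebraicity. The higher-genus/refined-tropical correspondence of \cite{bousseau2017tropical, bousseau2018quantum_tropical} is indispensable; once it is granted, the remainder of the proof is essentially bookkeeping.
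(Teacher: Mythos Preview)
Your proposal is correct and follows essentially the same approach as the paper: reduce to showing the structure constants $C_{p_1,p_2}^p$ lie in $R_I^q$, which via the broken-line formula of Theorem~\ref{thm_product} reduces to showing each wall function $\hat{f}_\fd$ has coefficients in $\kk_q$, and this is supplied by \cite{bousseau2018quantum_tropical}. The paper makes one step more explicit than you do: the relevant integrality input from \cite{bousseau2018quantum_tropical} is specifically the BPS-type result (Theorem~33 there), and it applies directly only to the scattering diagram $S(\hat{\fD}_\fm)$ on the smooth affine manifold $\bar{B}$, so one must first pass to a toric model, invoke the identification $\nu(\hat{\fD}^\can)=S(\hat{\fD}_\fm)$ of Proposition~\ref{prop_comp_ch3}, and use the explicit comparison formulas of Lemma~\ref{lem_isom_ch3} to transfer $q$-polynomiality back to the wall functions of $\hat{\fD}^\can$ itself.
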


The proof of Theorems 
\ref{thm_1_ch3}--\ref{thm_q}
is given in
\cref{section_scattering}--
\cref{section_ext}.
In \cref{section_scattering}, we explain how a consistent quantum scattering diagram can be used as input to a 
construction of quantum modified 
Mumford degeneration, giving a deformation quantization of the modified Mumford 
degeneration of 
\cite{MR3415066},
\cite{gross2016theta}, constructed from a
classical scattering diagram.
In \cref{section_canonical},
we explain how to construct a 
quantum scattering diagram from 
higher-genus log Gromov-Witten theory of a 
Looijenga pair and we prove its consistency using the main result of 
\cite{bousseau2018quantum_tropical}.
We finish the proof of Theorems 
\ref{thm_1_ch3}--\ref{thm_q} 
in \cref{section_ext}.

\section{Quantum modified Mumford degenerations}
\label{section_scattering}

In this section we explain how to construct a
quantization of the mirror family of a given Looijenga pair $(Y,D)$ starting from its tropicalization $(B, \Sigma)$ and a consistent quantum scattering diagram.

In \cref{subsection_building_blocks}
we describe the rings $R^{\hbar}_{\sigma,I}$ and 
$R^{\hbar}_{\rho,I}$ involved 
in the 
construction of the quantum version of 
modified Mumford degenerations.
In 
\cref{section_quantum_scattering_diagrams}
we review the notion of quantum scattering diagrams.
In 
\cref{subsection_gluing_ch3} we explain how 
a consistent quantum scattering diagram
gives a way to glue together the rings $R^{\hbar}_{\sigma,I}$ and 
$R^{\hbar}_{\rho,I}$ to produce
a quantum modified Mumford degeneration.
In 
\cref{section_broken_theta} we review the notions of quantum broken lines and theta functions and we use them in
\cref{section_deformation_quant_mirror}
to prove that the quantum modified Mumford 
degeneration is indeed a deformation
quantization of the modified Mumford 
degeneration of \cite{MR3415066}.
In \cref{section_algebra_structure} we express the structure constants of the 
quantum algebra of global sections in terms of quantum broken lines.

\subsection{Building blocks}
\label{subsection_building_blocks}
The goal of this section is 
to define non-commutative deformations 
$R_{\sigma,I}^{\hbar}$ and 
$R_{\rho,I}^{\hbar}$ of the rings 
$R_{\sigma,I}$ and 
$R_{\rho,I}$ defined in \cite[\S 2.1-2.2]{MR3415066}. 
The way to go from 
$R_{\sigma,I}$ to $R_{\sigma,I}^{\hbar}$
is fairly obvious.
The deformation $R_{\rho,I}^{\hbar}$
of $R_{\rho,I}$ is perhaps not so obvious.

We fix a Looijenga pair $(Y,D)$, 
its tropicalization $(B, \Sigma)$, 
a toric monoid $P$, a radical 
monomial ideal $J$ of $P$, and a 
$P_\R^{\mathrm{gp}}$-valued multivalued convex $\Sigma$-piecewise linear function
$\varphi$ on $B$.

For any locally constant sheaf $\cF$ on 
$B_0$ and any simply connected subset
$\tau$ of $B_0$, we write $\cF_\tau$ for the stalk of this local system at any point of 
$\tau$. We will constantly use this notation for $\tau$ a cone of $\Sigma$.

If $\tau$ is a cone of $\Sigma$, we define the localized fan $\tau^{-1} \Sigma$ as being the fan in $\Lambda_{\R, \tau}$ defined as follows.
\begin{itemize}
\item If $\tau$ is two-dimensional, then
$\tau^{-1} \Sigma$ consists just of the entire space $\Lambda_{\R, \tau}$.
\item If $\tau$ is one-dimensional, then 
$\tau^{-1} \Sigma$ consists of the tangent line 
of $\tau$ in $\Lambda_{\R, \tau}$
along with the two half-planes with boundary this tangent line.
\end{itemize}

For each $\tau$ cone of $\Sigma$, the 
$\Sigma$-piecewise $P$-convex function $\varphi \colon B_0 \rightarrow 
\B_{0,\varphi}$ determines a 
$\tau^{-1} \Sigma$-piecewise linear 
$P$-convex function 
$\varphi_\tau \colon 
\Lambda_{\R, \tau} \rightarrow \cP_{\R, \tau}$: if we choose 
$\sigma$ a two-dimensional cone of 
$\Sigma$ containing $\tau$, we have an identification 
$\cP_{\R,\tau}\simeq \Lambda_{\R,\tau} \oplus P_\R$,
and we define 
\[ \varphi_\tau \colon \Lambda_{\R,\tau}
\rightarrow \cP_{\R,\tau}=\Lambda_{\R,\tau} \oplus P_\R\]
\[ m \mapsto (m, \varphi|_\sigma(m))\,.\]
It follows from the definition of 
$\cP$ given in
\cref{section_tropicalization} that 
$\varphi_\tau$ is well defined, that is, independent of the choice of $\sigma$.
By construction, $\varphi_\tau 
\colon \Lambda_{\R,\tau}
\rightarrow \cP_{\R,\tau}$ is a section of the natural projection map
$\mathfrak{r} \colon \cP_{\R,\tau}
\rightarrow \Lambda_{\R,\tau}$
discussed in 
\cref{section_tropicalization}.

We define the toric monoid $P_{\varphi_\tau}
\subset \cP_{\tau}$ by 
\begin{equation*}
P_{\varphi_{\tau}} \coloneqq \{s \in \cP_{\tau}\,|\,\hbox{$s = p +
\varphi_{\tau}(m)$ for some $p\in P$, $m\in \Lambda_{\tau}$}\}.
\end{equation*}
If $\rho$ is a one-dimensional cone of $\Sigma$, bounding the 
two-dimensional cones 
$\sigma_+$ and $\sigma_-$ of 
$\Sigma$, we have $P_{\varphi_\rho}
\subset P_{\varphi_{\sigma_+}}$,
$P_{\varphi_\rho}
\subset P_{\varphi_{\sigma_-}}$, and  
\[ P_{\varphi_{\sigma_+}} \cap P_{\varphi_{\sigma_-}}
=P_{\varphi_\rho} \,.\]
The monoids $P_{\varphi_{\sigma_+}}$,
$P_{\varphi_{\sigma_-}}$ and  
$P_{\varphi_\rho}$ are represented in 
Figures 1-3.

\begin{figure}[h]
\centering
\setlength{\unitlength}{1cm}
\begin{picture}(6,5)
\put(3,1){\circle*{0.1}}
\put(3,1){\line(1,0){3}}
\put(3,1){\line(-1,0){3}}
\put(3,0.6){$\rho$}
\put(1.5,0.6){$\sigma_+$}
\put(4.5,0.6){$\sigma_-$}
\put(-1,3){$\varphi_{\sigma_+}$}
\put(6,4.5){$\varphi_{\sigma_-}$}
\put(3.1,2.75){$\varphi_{\rho}$}
\put(3,3){\circle*{0.1}}
\put(3,3){\line(-1,0){3}}
\put(3,3){\line(2,1){3}}
\put(3,3){\line(2,1){3}}
\multiput(0,3)(0.5,0){12}
{\circle*{0.1}}
\multiput(0,3.5)(0.5,0){12}
{\circle*{0.1}}
\multiput(0,4)(0.5,0){12}
{\circle*{0.1}}
\multiput(0,4.5)(0.5,0){12}
{\circle*{0.1}}
\end{picture}
\caption{$P_{\varphi_{\sigma_+}}$}
\end{figure}

\begin{figure}[h]
\centering
\setlength{\unitlength}{1cm}
\begin{picture}(6,5)
\put(3,1){\circle*{0.1}}
\put(3,1){\line(1,0){3}}
\put(3,1){\line(-1,0){3}}
\put(3,0.6){$\rho$}
\put(1.5,0.6){$\sigma_+$}
\put(4.5,0.6){$\sigma_-$}
\put(-1,3){$\varphi_{\sigma_+}$}
\put(6,4.5){$\varphi_{\sigma_-}$}
\put(3.1,2.75){$\varphi_{\rho}$}
\put(3,3){\circle*{0.1}}
\put(3,3){\line(-1,0){3}}
\put(3,3){\line(2,1){3}}
\put(3,3){\line(2,1){3}}
\multiput(0,3)(0.5,0){6}
{\circle*{0.1}}
\multiput(0,3.5)(0.5,0){9}
{\circle*{0.1}}
\multiput(0,4)(0.5,0){11}
{\circle*{0.1}}
\multiput(0,4.5)(0.5,0){12}
{\circle*{0.1}}
\multiput(0,2.5)(0.5,0){5}
{\circle*{0.1}}
\multiput(0,2)(0.5,0){3}
{\circle*{0.1}}
\multiput(0,1.5)(0.5,0){1}
{\circle*{0.1}}
\end{picture}
\caption{$P_{\varphi_{\sigma_-}}$}
\end{figure}

\begin{figure}[h]
\centering
\setlength{\unitlength}{1cm}
\begin{picture}(6,5)
\put(3,1){\circle*{0.1}}
\put(3,1){\line(1,0){3}}
\put(3,1){\line(-1,0){3}}
\put(3,0.6){$\rho$}
\put(1.5,0.6){$\sigma_+$}
\put(4.5,0.6){$\sigma_-$}
\put(-1,3){$\varphi_{\sigma_+}$}
\put(6,4.5){$\varphi_{\sigma_-}$}
\put(3.1,2.75){$\varphi_{\rho}$}
\put(3,3){\circle*{0.1}}
\put(3,3){\line(-1,0){3}}
\put(3,3){\line(2,1){3}}
\put(3,3){\line(2,1){3}}
\multiput(0,3)(0.5,0){6}
{\circle*{0.1}}
\multiput(0,3.5)(0.5,0){9}
{\circle*{0.1}}
\multiput(0,4)(0.5,0){11}
{\circle*{0.1}}
\multiput(0,4.5)(0.5,0){12}
{\circle*{0.1}}
\end{picture}
\caption{$P_{\varphi_{\rho}}$}
\end{figure}

For every $\sigma$ two-dimensional cone of 
$\Sigma$, we define $R_{\sigma,I}^{\hbar}
\coloneqq
\kk_{\hbar}[P_{\varphi_\sigma}]/I$,
a
deformation quantization of 
$R_{\sigma,I} \coloneqq 
\kk [P_{\varphi_\sigma}]/I$.
We have a natural trivialization
$P_{\varphi_\sigma}=P \oplus \Lambda_\sigma$ and so 
$R_{\sigma,I}^{\hbar}$
is simply the algebra of functions on a trivial family of two-dimensional quantum tori parametrized by $\Spec R_I$.

Let $\rho$ be a one-dimensional cone of 
$\Sigma$. Let $\kappa_{\rho,\varphi}
\in P$ be the kink of $\varphi$ across
$\rho$, so that $z^{\kappa_{\rho,\varphi}}
\in R_I$. Let $X$ be an invertible formal variable. We fix elements
$\hat{f}_{\rho^\out}
\in R_I^{\hbar}[X^{-1}]$ and
$\hat{f}_{\rho^\iin}
\in R_I^{\hbar}[X]$.

Let $R^{\hbar}_{\rho,I}$ be the
$R_I^{\hbar}$-algebra generated by
formal variables 
$X_+$, $X_-$ and $X$, with $X$ invertible, and with relations
\[ X X_+ = q X_+ X \,,  \]
\[ X X_- = q^{-1} X_- X \,,\]
\[ X_+ X_-=q^{\frac{1}{2} D_\rho^2}
\hat{z}^{\kappa_{\rho,\varphi}} \hat{f}_{\rho^\out}(q^{-1} X)
\hat{f}_{\rho^\iin}(X)
X^{-D_{\rho}^2} \,,\]
\[X_- X_+=q^{-\frac{1}{2} D_\rho^2}
\hat{z}^{\kappa_{\rho,\varphi}} \hat{f}_{\rho^\out}(X)
\hat{f}_{\rho^\iin}(qX)
X^{-D_{\rho}^2} \,,
\]
where $q=e^{i \hbar}$.
The $R_I^{\hbar}$-algebra $R_{\rho,I}^{\hbar}$ is flat as $R_I^{\hbar}$-module and so is a deformation quantization of 
\[R_{\rho,I}
\coloneqq R_I[X_+,X_-,X^{\pm}]/(X_+ X_-
-z^{\kappa_{\rho,\varphi}}
X^{-D_\rho^2} f_{\rho^\out}(X)
f_{\rho^\iin}(X)) \,.\]

Let $\sigma_+$ and $\sigma_-$ be the two-dimensional cones of $\Sigma$ bounding $\rho$, and let 
$\rho_+$ and $\rho_-$ be the other boundary rays of $\sigma_+$ and 
$\sigma_-$ respectively, 
such that $\rho_-$, $\rho$ and 
$\rho_+$ are in anticlockwise order.

The precise form of $R_{\rho,I}^{\hbar}$
is justified by the following 
proposition.

\begin{prop} \label{prop_R}
The map of $R_I^{\hbar}$-algebras
\[\tilde{\psi}_{\rho,-} \colon
R_I^{\hbar}
\langle X_+,X_-,X^\pm \rangle
\rightarrow R_{\sigma_-,I}^{\hbar} \]
defined by 
\begin{align*}
\tilde{\psi}_{\rho,-}(X)&=
\hat{z}^{\varphi_\rho(m_\rho)}\,,\\
\tilde{\psi}_{\rho,-}(X_-)&=
\hat{z}^{\varphi_\rho(m_{\rho_-})}\,,\\
\tilde{\psi}_{\rho,-}(X_+)&=
\hat{f}_{\rho^\iin}(\hat{z}^{\varphi_\rho(m_\rho)})
\hat{z}^{\varphi_\rho(m_{\rho_+})}
\hat{f}_{\rho^\out}(\hat{z}^{\varphi_\rho(m_\rho)}) \\
&=
\hat{z}^{\varphi_\rho(m_{\rho_+})}
\hat{f}_{\rho^\iin}(q \hat{z}^{\varphi_\rho(m_\rho)})
\hat{f}_{\rho^\out}(\hat{z}^{\varphi_\rho(m_\rho)}) \,,
\end{align*}
induces a map of 
$R_I^{\hbar}$-algebras
\[\hat{\psi}_{\rho,-} \colon
R_{\rho,I}^{\hbar}
\rightarrow R_{\sigma_-,I}^{\hbar}\,. \]

The map of $R_I^{\hbar}$-algebras
\[\tilde{\psi}_{\rho,+} \colon 
R_I^{\hbar}
\langle X_+,X_-,X^\pm \rangle
\rightarrow R_{\sigma_+,I}^{\hbar} \,,\]
defined by 
\begin{align*}
\tilde{\psi}_{\rho,+}(X)&=
\hat{z}^{\varphi_\rho(m_\rho)}\,,\\
\tilde{\psi}_{\rho,+}(X_-)&=
\hat{f}_{\rho^\out}(\hat{z}^{\varphi_\rho(m_\rho)})
\hat{z}^{\varphi_\rho(m_{\rho_-})}
\hat{f}_{\rho^\iin}(\hat{z}^{\varphi_\rho(m_\rho)})\\
&=
\hat{z}^{\varphi_\rho(m_{\rho_-})}
\hat{f}_{\rho^\out}(q^{-1} \hat{z}^{\varphi_\rho(m_\rho)})
\hat{f}_{\rho^\iin}(\hat{z}^{\varphi_\rho(m_\rho)}) \,,\\
\tilde{\psi}_{\rho,+}(X_+)&=\hat{z}^{\varphi_\rho(m_{\rho_+})}
\,
\end{align*}
induces a map of 
$R_I^{\hbar}$-algebras
\[\hat{\psi}_{\rho,+} \colon
R_{\rho,I}^{\hbar}
\rightarrow R_{\sigma_+,I}^{\hbar}\,. \]
\end{prop}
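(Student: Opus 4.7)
The plan is to verify directly that the four defining relations of $R_{\rho,I}^{\hbar}$ are sent to zero by the free-algebra map $\tilde{\psi}_{\rho,-}$, yielding the desired factorization through $R_{\rho,I}^{\hbar}$. Since $\tilde{\psi}_{\rho,+}$ is obtained from $\tilde{\psi}_{\rho,-}$ by swapping $\rho_+ \leftrightarrow \rho_-$ and $\sigma_+ \leftrightarrow \sigma_-$ (with the corresponding sign flip in the skew form), I concentrate on $\tilde{\psi}_{\rho,-}$, whose images all live in the two-dimensional quantum torus $R_{\sigma_-,I}^{\hbar}$.

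The geometric input consists of three identities. Working in the chart $\psi_j$ of Section \ref{section_tropicalization} at $\rho = \rho_j$, one has $m_{\rho_-} = (1,0)$, $m_\rho = (0,1)$ and $m_{\rho_+} = (-1,-D_\rho^2)$, giving the skew form values $\langle m_\rho, m_{\rho_+}\rangle = 1$, $\langle m_\rho, m_{\rho_-}\rangle = -1$, $\langle m_{\rho_+}, m_{\rho_-}\rangle = D_\rho^2$, together with the toric linear relation $m_{\rho_-} + m_{\rho_+} + D_\rho^2 m_\rho = 0$. Lifting to $\cP_\rho$, the definition of the kink $\kappa_{\rho,\varphi}$ combined with the convention $\delta_\rho(m_{\rho_+}) = 1$ then yields
\[
\varphi_\rho(m_{\rho_-}) + \varphi_\rho(m_{\rho_+}) \;=\; \kappa_{\rho,\varphi} \;-\; D_\rho^2\,\varphi_\rho(m_\rho)
\]
in $\cP_\rho$.

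The commutation relations $X X_\pm = q^{\pm 1} X_\pm X$ are then essentially immediate: the power series $\hat{f}_{\rho^{\iin}}$ and $\hat{f}_{\rho^{\out}}$ are functions of $X = \hat{z}^{\varphi_\rho(m_\rho)}$ and so commute with $X$, leaving only the commutator of $\hat{z}^{\varphi_\rho(m_\rho)}$ with $\hat{z}^{\varphi_\rho(m_{\rho_\pm})}$, which produces exactly $q^{\pm 1}$ by the skew form values above. For the two mixed relations, the workhorse is the identity
\[
\hat{f}_{\rho^{\out}}(X)\, \hat{z}^{\varphi_\rho(m_{\rho_-})} \;=\; \hat{z}^{\varphi_\rho(m_{\rho_-})}\, \hat{f}_{\rho^{\out}}(q^{-1}X),
\]
obtained termwise from $X\, \hat{z}^{\varphi_\rho(m_{\rho_-})} = q^{-1}\, \hat{z}^{\varphi_\rho(m_{\rho_-})}\, X$. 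Applying it to collapse the middle of $\tilde{\psi}_{\rho,-}(X_+ X_-)$ isolates the product $\hat{z}^{\varphi_\rho(m_{\rho_+})}\, \hat{z}^{\varphi_\rho(m_{\rho_-})}$, which by the skew form value $D_\rho^2$ and the kink identity equals $q^{\frac{1}{2}D_\rho^2}\, \hat{z}^{\kappa_{\rho,\varphi}}\, X^{-D_\rho^2}$; centrality of $\hat{z}^{\kappa_{\rho,\varphi}}$ and mutual commutativity of $\hat{f}_{\rho^{\iin}}(X)$, $\hat{f}_{\rho^{\out}}(q^{-1}X)$ and $X^{-D_\rho^2}$ then reassemble the answer into the right-hand side prescribed by the definition of $R_{\rho,I}^{\hbar}$. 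The relation for $X_- X_+$ follows from the mirror computation.

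The equivalence of the two displayed formulae for $\tilde{\psi}_{\rho,-}(X_+)$ is a one-line consequence of $X\, \hat{z}^{\varphi_\rho(m_{\rho_+})} = q\, \hat{z}^{\varphi_\rho(m_{\rho_+})}\, X$, which allows one to commute $\hat{z}^{\varphi_\rho(m_{\rho_+})}$ past $\hat{f}_{\rho^{\iin}}(X)$ at the cost of shifting $X \mapsto q X$. I do not anticipate any genuine obstacle in the proof: its entire content is that the somewhat ad hoc prefactors $q^{\pm \frac{1}{2} D_\rho^2}$ and the $q^{\mp 1}$-shifts of $\hat{f}_{\rho^{\out}}$ in the definition of $R_{\rho,I}^{\hbar}$ are precisely calibrated to absorb the quantum torus commutators produced by the skew form and by the kink of $\varphi$ across $\rho$.
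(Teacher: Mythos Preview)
Your approach is essentially the same as the paper's: identify the skew-form values $\langle m_\rho, m_{\rho_\pm}\rangle = \pm 1$, $\langle m_{\rho_+}, m_{\rho_-}\rangle = D_\rho^2$ and the kink identity $\varphi_\rho(m_{\rho_-}) + \varphi_\rho(m_{\rho_+}) = \kappa_{\rho,\varphi} - D_\rho^2\,\varphi_\rho(m_\rho)$, then verify the four relations by the commutation trick you describe. The paper carries out all four checks for \emph{both} $\tilde{\psi}_{\rho,-}$ and $\tilde{\psi}_{\rho,+}$ explicitly rather than appealing to symmetry.

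One small imprecision: your symmetry claim for $\tilde{\psi}_{\rho,+}$ is not quite right as stated. Swapping $\rho_+ \leftrightarrow \rho_-$ and flipping the sign of the skew form alone does not interchange the two maps, because in $\tilde{\psi}_{\rho,-}(X_+)$ the dressing is $\hat{f}_{\rho^{\iin}}\cdot(\,\cdot\,)\cdot\hat{f}_{\rho^{\out}}$, whereas in $\tilde{\psi}_{\rho,+}(X_-)$ it is $\hat{f}_{\rho^{\out}}\cdot(\,\cdot\,)\cdot\hat{f}_{\rho^{\iin}}$. The symmetry only works if you \emph{also} swap $\iin \leftrightarrow \out$ (and then use that $\hat{f}_{\rho^{\iin}}$ and $\hat{f}_{\rho^{\out}}$, being functions of $X$ alone, commute with each other so the order on the right-hand sides of the $X_\pm X_\mp$ relations is immaterial). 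This is harmless, but you should either state the full symmetry or, as the paper does, simply run the second computation.
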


\begin{proof}
We have to check that 
$\tilde{\psi}_{\rho,-}$ and
$\tilde{\psi}_{\rho,+}$
map the relations defining 
$R_{\rho,I}^{\hbar}$ to zero.

We have 
$\langle m_\rho, m_{\rho_+} \rangle=1$ and
$\langle m_\rho, m_{\rho_-} \rangle=-1$.
It follows that 
\[ \tilde{\psi}_{\rho,-}(X X_+ -qX_+ X)
=0\,,
\]
\[ \tilde{\psi}_{\rho,+}(X X_+ -qX_+ X) =0 \,,\]
and 
\[\tilde{\psi}_{\rho,-}(X X_- -q^{-1}X_-X)
=0\,,\]
\[
\tilde{\psi}_{\rho,+}(X X_- -q^{-1}X_-X)
=0 \,.\]

Furthermore, we have 
\[m_{\rho_-}+D_\rho^2 m_\rho + m_{\rho_+}
=0\] 
so 
\[\langle m_{\rho_+}, m_{\rho_-} \rangle =D_\rho^2\] 
and 
\[\varphi_\rho(m_{\rho_-})
+\varphi_\rho(m_{\rho_+})
=\kappa_{\rho,\varphi}-D_\rho^2 \varphi_\rho(m_\rho)\,.\]

It follows that 
\begin{align*}
\tilde{\psi}_{\rho,-}(X_+ X_-)
&=
\hat{f}_{\rho^\iin}(\hat{z}^{\varphi_\rho(m_\rho)})
\hat{z}^{\varphi_\rho(m_{\rho_+})}
\hat{f}_{\rho^\out}(\hat{z}^{\varphi_\rho(m_\rho)})
\hat{z}^{\varphi_\rho(m_{\rho_-})}\\
&=
q^{\frac{1}{2}D_\rho^2}
\hat{f}_{\rho^\iin}(\hat{z}^{\varphi_\rho(m_\rho)})
\hat{f}_{\rho^\out}(q^{-1} \hat{z}^{\varphi_\rho(m_\rho)})
\hat{z}^{\kappa_{\rho,\varphi}-D_\rho^2 \varphi_\rho(m_\rho)} \\
&=q^{\frac{1}{2} D_\rho^2}
\hat{z}^{\kappa_{\rho,\varphi}} \tilde{\psi}_{\rho,-}\left(
\hat{f}_{\rho^\iin}(X)
\hat{f}_{\rho^\out}(q^{-1} X)X^{-D_{\rho}^2}
\right) \,, \\
\end{align*}

\begin{align*}
\tilde{\psi}_{\rho,+}(X_+ X_-)
&=
\hat{z}^{\varphi_\rho(m_{\rho_+})}
\hat{f}_{\rho^\out}(\hat{z}^{\varphi_\rho(m_\rho)})\hat{z}^{\varphi_\rho(m_{\rho_-})}
\hat{f}_{\rho^\iin}(\hat{z}^{\varphi_\rho(m_\rho)})\\
&=q^{\frac{1}{2}D_\rho^2}
\hat{f}_{\rho^\iin}(\hat{z}^{\varphi_\rho(m_\rho)})
\hat{f}_{\rho^\out}(q^{-1} \hat{z}^{\varphi_\rho(m_\rho)})
\hat{z}^{\kappa_{\rho,\varphi}-D_\rho^2 \varphi_\rho(m_\rho)} \\
&=q^{\frac{1}{2} D_\rho^2}
\hat{z}^{\kappa_{\rho,\varphi}} \tilde{\psi}_{\rho,+}\left(
\hat{f}_{\rho^\iin}(X)
\hat{f}_{\rho^\out}(q^{-1} X)X^{-D_{\rho}^2}
\right) \,,
\end{align*}
and
\begin{align*}
\tilde{\psi}_{\rho,-}(X_-X_+)
&=
\hat{z}^{\varphi_\rho(m_{\rho_-})}
f_{\rho^\iin}(\hat{z}^{\varphi_\rho(m_\rho)})
\hat{z}^{\varphi_\rho(m_{\rho_+})}
f_{\rho^\out}(\hat{z}^{\varphi_\rho(m_\rho)})
\\
&=q^{-\frac{1}{2}D_\rho^2}
\hat{z}^{\kappa_{\rho,\varphi}-D_\rho^2 \varphi_\rho(m_\rho)}
f_{\rho^\iin}(\hat{z}^{q \varphi_\rho(m_\rho)})
f_{\rho^\out}( \hat{z}^{\varphi_\rho(m_\rho)})
\\
&= q^{-\frac{1}{2} D_\rho^2}
\hat{z}^{\kappa_{\rho,\varphi}} \tilde{\psi}_{\rho,-} 
\left(
\hat{f}_{\rho^\iin}(qX)
\hat{f}_{\rho^\out}(X)
X^{-D_{\rho}^2} \right)\,,
\end{align*}

\begin{align*}
\tilde{\psi}_{\rho,+}(X_-X_+)
&=\hat{f}_{\rho^\out}(\hat{z}^{\varphi_\rho(m_\rho)})
\hat{z}^{\varphi_\rho(m_{\rho_-})}
f_{\rho^\iin}(\hat{z}^{\varphi_\rho(m_\rho)})
\hat{z}^{\varphi_\rho(m_{\rho_+})}
\\
&=q^{-\frac{1}{2}D_\rho^2}\hat{f}_\rho( \hat{z}^{\varphi_\rho(m_\rho)})
f_{\rho^\iin}(q \hat{z}^{\varphi_\rho(m_\rho)})
\hat{z}^{\kappa_{\rho,\varphi}-D_\rho^2 \varphi_\rho(m_\rho)}
\\
&= q^{-\frac{1}{2} D_\rho^2}
\hat{z}^{\kappa_{\rho,\varphi}} \tilde{\psi}_{\rho,+} 
\left( 
\hat{f}_{\rho^\iin}(qX)
\hat{f}_{\rho^\out}(X)
X^{-D_{\rho}^2} \right)\,.
\end{align*}
\end{proof}

In the special case where $D_{\rho}^2=0$
and 
$\hat{f}_{\rho^\iin}
=1$, our description of $R_{\rho,I}^{\hbar}$ by generators and relations coincides with the description given by Soibelman in \cite[\S 7.5]{MR2596639} of a local model for deformation quantization of a neighborhood of a focus-focus singularity.

The algebra $R_{\sigma,I}^{\hbar}$
is a deformation quantization of 
$R_{\sigma,I}$, 
and the algebra $R_{\rho,I}^{\hbar}$
is a deformation quantization of 
$R_{\rho,I}$. The maps
$\hat{\psi}_{\rho,+}$ and 
$\hat{\psi}_{\rho,-}$ are quantizations of the maps $\psi_{\rho,-}$
and $\psi_{\rho,+}$
defined by 
\cite[formula (2.8)]{MR3415066}. Following \cite{MR3415066}, we denote 
$U_{\sigma,I}
\coloneqq \Spec R_{\sigma,I}$
and $U_{\rho,I}
\coloneqq \Spec R_{\rho,I}$.
If $\rho$ is a one-dimensional cone of $\Sigma$,
and $\sigma_+$ and $\sigma_-$
are the two-dimensional cones of 
$\Sigma$ bounding $\rho$, then the maps 
$\psi_{\rho,-}$ and 
$\psi_{\rho,+}$ induce
open immersions 
\[ U_{\sigma -,I}
\hookrightarrow U_{\rho,I} \]
and 
\[ U_{\sigma +,I}
\hookrightarrow U_{\rho,I}\,.\]

Using Ore localization
(see Definition \ref{def_loc}), we can 
produce from $R_{\sigma,I}^{\hbar}$ and 
$R_{\rho,I}^{\hbar}$ some sheaves
of flat $\kk_{\hbar}$-algebras 
$\cO_{U_{\sigma,I}}^{\hbar}$ and 
$\cO_{U_{\rho,I}}^{\hbar}$
on $U_{\sigma,I}$ and 
$U_{\rho,I}$, such that 
\[ \cO_{U_{\sigma,I}}^{\hbar}/
\hbar \cO_{U_{\sigma,I}}^{\hbar}
\simeq \cO_{U_{\sigma,I}}\]
and 
\[ \cO_{U_{\rho,I}}^{\hbar}/
\hbar \cO_{U_{\rho,I}}^{\hbar}
\simeq \cO_{U_{\rho,I}}\]
respectively.

\subsection{Quantum scattering diagrams}
\label{section_quantum_scattering_diagrams}

Quantum scattering diagrams have been studied by Filippini and Stoppa \cite{MR3383167} in dimension two
and by Mandel
\cite{mandel2015scattering} in higher dimensions. Mandel \cite{mandel2015scattering} also studied quantum broken lines and quantum theta functions. Both 
\cite{MR3383167} and \cite{mandel2015scattering}
work with smooth integral affine manifolds.
We need to make some changes to include the case we care about, where the integral affine manifold is the tropicalization 
$B$ of a Looijenga pair and has a singularity at the origin with a non-trivial monodromy around it. 

As in the previous section, 
we fix $(Y,D)$ a Looijenga pair, 
its tropicalization
$(B, \Sigma)$, a toric monoid $P$, a radical 
monomial ideal $J$ of $P$, and a 
$P_\R^{\mathrm{gp}}$-valued multivalued
convex $\Sigma$-piecewise linear 
function $\varphi$ on $B$.
Recall from 
\cref{section_tropicalization} that we then have an exact sequence
\[ 0 \rightarrow \underline{P}^{\gp}
\rightarrow \cP \xrightarrow{\mathfrak{r}} 
\Lambda \rightarrow 0\,\] 
of locally constant sheaves on $B_0$.

We explained in  \cref{subsection_building_blocks} how to define for every cone $\tau$ of $\Sigma$ 
a toric monoid 
$P_{\varphi_\tau}$.
We denote by 
\[ \kk_{\hbar}\widehat{[P_{\varphi_{\tau}}]}\]
the $J$-adic completion of the
$\kk_{\hbar}$-algebra
$\kk_{\hbar}[P_{\varphi_\tau}]$.
The map $\mathfrak{r} \colon \cP \rightarrow \Lambda$
induces a morphism of monoids 
$\mathfrak{r} \colon P_{\varphi_\tau}
\rightarrow \Lambda_\tau$.

\begin{defn}
\label{def_quantum_scattering}
A \emph{quantum scattering diagram} $\hat{\fD}$ for the data 
$(B,\Sigma)$, $P$, $J$ and $\varphi$
is a set 
\[\hat{\fD}=\{(\fd,\hat{H}_{\fd})\}\]
where
\begin{itemize}
\item $\fd \subset B$ is a ray of rational slope in $B$ with endpoint the origin 
$0 \in B$.
\item Let 
$\tau_\fd$ be the smallest cone of $\Sigma$
containing $\fd$ and let $m_\fd \in \Lambda_{\tau_\fd}$ be the primitive generator of $\fd$ pointing away from the origin. Then we have
either 
\[ \hat{H}_\fd = \sum_{\substack{p \in P_{\varphi_{\tau_\fd}} \\ \mathfrak{r}(p) \in \Z_{< 0} m_\fd}} H_p \hat{z}^p \in \kk_{\hbar}\widehat{[P_{\varphi_{\tau_\fd}}]}  \,,\]
or 
\[ \hat{H}_\fd = \sum_{\substack{p \in P_{\varphi_{\tau_\fd}} \\ \mathfrak{r}(p) \in \Z_{> 0} m_\fd}} H_p \hat{z}^p \in \kk_{\hbar}\widehat{[P_{\varphi_{\tau_\fd}}]}  \,.\]
In the first case, we say that the ray 
$(\fd, \hat{H}_{\fd})$ 
is \emph{outgoing}, and 
in the second case, we say that the 
ray 
$(\fd, \hat{H}_{\fd})$ is \emph{ingoing}.
\item Let $\tau_{\fd}$ be the smallest cone of $\Sigma$
containing $\fd$. If $\dim \tau_\fd =2$, or if 
$\dim \tau_\fd=1$ and 
$\kappa_{\tau_\fd,\varphi} \notin J$, then
$\hat{H}_\fd =0 \mod J$.
\item For any ideal $I \subset P$ of radical $J$, there are only finitely many rays 
$(\fd, \hat{H}_\fd)$
such that $\hat{H}_\fd \neq 0 \mod I$.

\end{itemize}
\end{defn}

Given a ray 
$(\fd, \hat{H}_\fd)$ of a quantum scattering diagram, we call 
$\hat{H}_\fd$ the Hamiltonian attached to $\rho$. This terminology is justified by
\cref{section_aut_ch3}, where we attach to 
$(\fd, \hat{H}_\fd)$ the automorphism
$\hat{\Phi}_{\hat{H}_{\fd}}$ given by the time-one evolution according to the quantum Hamiltonian 
$\hat{H}_\fd$.

\subsection{Quantum automorphisms}
\label{section_aut_ch3}

Let $(\fd,\hat{H}_{\fd})$ be a ray of a quantum scattering diagram $\hat{\fD}$ for the data 
$(B,\Sigma)$, $P$, $J$ and $\varphi$.
Let $\tau_{\fd}$ be the smallest cone of 
$\Sigma$ containing $\fd$ and let 
$m_\fd \in \Lambda_{\tau_\fd}$
be the primitive generator of $\fd$ 
pointing away from the origin.
Denote $m(\hat{H}_\fd)=m_\fd$
if $(\fd,\hat{H}_\fd)$ is outgoing 
and $m(\hat{H}_\fd)=-m_\fd$
if $(\fd,\hat{H}_\fd)$ is ingoing.
Writing
\[\hat{H}_{\fd}=\sum_{p \in P_{\varphi_{\tau_\fd}}} H_p \hat{z}^p \in 
\kk_{\hbar}\widehat{[P_{\varphi_{\tau_\fd}}]} \,,\]
we denote 
\[\hat{f}_{\fd}\coloneqq \exp
\left( 
\sum_{\substack{p \in P_{\varphi_{\tau_\fd}} \\ \mathfrak{r}(p) =\ell
m(\hat{H}_\fd)
}}
(q^\ell-1)H_p \hat{z}^p 
\right) \in \kk_{\hbar}\widehat{[P_{\varphi_{\tau_\fd}}]}\,, \]
where $q=e^{i \hbar}$. Remark that, by our definition of
$m(\hat{H}_\fd)$, we have $\ell \leqslant 0$ when writing 
$\mathfrak{r}(p)=\ell m(\hat{H}_\fd)$.

We write
\[\hat{f}_\fd = \sum_{p \in P_{\varphi_{\tau_\fd}}} 
f_{p}\hat{z}^p \,.\]
For every $j \in \Z$, we define
\[ \hat{f}_\fd(q^j \hat{z})
\coloneqq \sum_{
\substack{p \in P_{\varphi_{\tau_\fd}} \\ \mathfrak{r}(p) =\ell 
m(\hat{H}_\fd)
}} q^{\ell j} f_{p} \hat{z}^p
\in \kk_{\hbar}\widehat{[P_{\varphi_{\tau_\fd}}]}\,,\]
where $q=e^{i\hbar}$.

\begin{lem} \label{lem_autom}
The automorphism 
$\hat{\Phi}_{\hat{H}_\fd}$ of 
$\kk_{\hbar}\widehat{[P_{\varphi_{\tau_\fd}}]}$
given by conjugation by $\exp \left(\hat{H}_\fd \right)$,
\[ \hat{z}^p \mapsto 
\exp \left( \hat{H}_\fd \right)
\hat{z}^p \exp 
\left( -\hat{H}_\fd \right) \,,\]
is equal to 
\[
\hat{z}^p \mapsto
\begin{dcases}
\hat{z}^p \prod_{j=0}^{\langle m(\hat{H}_\fd), \mathfrak{r}(p) \rangle -1}
\hat{f}_\fd(q^j \hat{z}) & \text{if  } \langle m(\hat{H}_\fd), \mathfrak{r}(p) \rangle \geqslant 0 \\
\hat{z}^p \prod_{j=0}^{|\langle 
m(\hat{H}_\fd)
,\mathfrak{r}(p) \rangle| -1} 
\hat{f}_{\fd}( q^{-j-1}\hat{z})^{-1} & \text{if  } 
\langle m(\hat{H}_\fd), \mathfrak{r}(p) \rangle <0 \,.
\end{dcases}\]
\end{lem}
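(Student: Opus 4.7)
The plan is to reduce the computation to a manipulation in a commutative subalgebra, exploiting one key structural observation: every monomial $\hat{z}^{p'}$ appearing in $\hat{H}_\fd$ satisfies $\mathfrak{r}(p') \in \Z\, m(\hat{H}_\fd)$, so any two such monomials commute in $\kk_{\hbar}\widehat{[P_{\varphi_{\tau_\fd}}]}$. This is because the skew form on $\cP$ has kernel $\underline{P}^{\gp}$ and descends to the alternating form on $\Lambda$, which vanishes on pairs of parallel vectors. Decomposing $\hat{H}_\fd = \sum_{\ell} H^{(\ell)}$, where $H^{(\ell)}$ collects the monomials with $\mathfrak{r}(p') = \ell\, m(\hat{H}_\fd)$, the $H^{(\ell)}$ pairwise commute and $\exp(\hat{H}_\fd) = \prod_\ell \exp(H^{(\ell)})$.

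Next I would compute the adjoint action explicitly. Setting $a \coloneqq \langle m(\hat{H}_\fd), \mathfrak{r}(p)\rangle$, the relation $\hat{z}^{p'}\hat{z}^p = q^{\langle p',p\rangle} \hat{z}^p \hat{z}^{p'}$, together with $\langle p', p\rangle = \ell\, a$ for $p'$ contributing to $H^{(\ell)}$, gives $H^{(\ell)} \hat{z}^p = q^{\ell a} \hat{z}^p H^{(\ell)}$. Expanding termwise then yields $\exp(H^{(\ell)})\, \hat{z}^p = \hat{z}^p \exp(q^{\ell a} H^{(\ell)})$, and moving $\hat{z}^p$ successively past each commuting factor of $\exp(\hat{H}_\fd)$ and $\exp(-\hat{H}_\fd)$ produces
\[\hat{\Phi}_{\hat{H}_\fd}(\hat{z}^p) \;=\; \hat{z}^p \exp\!\left(\sum_\ell (q^{\ell a}-1)\, H^{(\ell)}\right).\]

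Finally I would match this with the claimed products of $\hat{f}_\fd$. Writing $\hat{f}_\fd = \exp(F)$ with $F \coloneqq \sum_\ell (q^\ell-1) H^{(\ell)}$, the rescaling $\hat{z}\mapsto q^j \hat{z}$ is a ring endomorphism of the commutative subalgebra containing $F$, so $\hat{f}_\fd(q^j\hat{z}) = \exp(F(q^j\hat{z}))$. For $a \geqslant 0$ the geometric identity $\sum_{j=0}^{a-1} q^{j\ell}(q^\ell - 1) = q^{a\ell} - 1$ gives $\prod_{j=0}^{a-1} \hat{f}_\fd(q^j\hat{z}) = \exp\bigl(\sum_\ell (q^{a\ell}-1)H^{(\ell)}\bigr)$, matching the formula above. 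For $a < 0$, applying the analogous calculation to the inverses $\hat{f}_\fd(q^{-j-1}\hat{z})^{-1}$, and using $\sum_{j=0}^{|a|-1} q^{(-j-1)\ell}(q^\ell - 1) = 1 - q^{a\ell}$ together with the sign produced by inversion, recovers the same exponent.

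I expect the only real difficulty to be bookkeeping: tracking the sign conventions that distinguish outgoing from ingoing rays (which enter only through the sign of $m(\hat{H}_\fd)$, hence of $\ell$), and checking that the exponential and infinite-product manipulations are legitimate in the $J$-adically completed algebra. The latter is guaranteed by the finiteness condition in Definition \ref{def_quantum_scattering} and by the fact that $\ell \neq 0$ in all sums, so the denominators $q^\ell - 1$ appearing when computing the geometric series are formal units in $\kk_{\hbar}$.
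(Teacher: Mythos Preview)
Your proposal is correct and follows essentially the same route as the paper: compute the conjugation using the relation $\hat{z}^{p'}\hat{z}^p = q^{\ell a}\,\hat{z}^p\hat{z}^{p'}$ for monomials in $\hat{H}_\fd$, obtain $\hat{z}^p\exp\bigl(\sum_\ell (q^{\ell a}-1)H^{(\ell)}\bigr)$, and then telescope via the geometric-sum identity to recognise the product of shifted $\hat{f}_\fd$'s. One small correction to your final remark: $q^\ell-1 = i\ell\hbar + O(\hbar^2)$ is \emph{not} a unit in $\kk_\hbar$, but this is harmless since the identity $\sum_{j=0}^{a-1} q^{j\ell}(q^\ell-1)=q^{a\ell}-1$ is a polynomial identity and no division is actually required.
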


\begin{proof}
Using 
$\hat{z}^{p'} \hat{z}^p
=q^{\langle \mathfrak{r}(p'), \mathfrak{r}(p) \rangle}
\hat{z}^p \hat{z}^{p'}$, we get
\[
\exp \left( \hat{H}_\fd \right)
\hat{z}^p \exp 
\left( -\hat{H}_\fd \right) 
= \hat{z}^p \exp \left( 
\sum_{
\substack{p' \in P_{\varphi_{\tau_\fd}} \\ \mathfrak{r}(p') =\ell m(\hat{H}_\fd)}} (q^{\ell \langle m(\hat{H}_\fd),\mathfrak{r}(p)
\rangle}-1)H_{p'} \hat{z}^{p'}
\right) \,.\]
If $\langle m(\hat{H}_\fd), \mathfrak{r}(p) \rangle \geqslant 0$, 
this can be written
\[ 
 \hat{z}^p \exp \left( 
\sum_{
\substack{p' \in P_{\varphi_{\tau_\fd}} \\ \mathfrak{r}(p') =\ell m(\hat{H}_\fd)}} \frac{1-q^{\ell \langle m(\hat{H}_\fd), \mathfrak{r}(p)
\rangle}}{1-q^\ell}(q^\ell-1)H_{p'} \hat{z}^{p'}
\right)\] 
\[=\hat{z}^p \exp \left( 
\sum_{
\substack{p' \in P_{\varphi_{\tau_\fd}} \\ \mathfrak{r}(p') =\ell m(\hat{H}_\fd)}} \sum_{j=0}^{\langle m(\hat{H}_\fd), \mathfrak{r}(p)
\rangle -1}
q^{\ell j} 
(q^\ell-1)H_{p'} \hat{z}^{p'}
\right)\] 
\[=\hat{z}^p \prod_{j=0}^{\langle m(\hat{H}_\fd), \mathfrak{r}(p) \rangle -1}
\hat{f}_\fd(q^j \hat{z})\,.\]
If $\langle m(\hat{H}_\fd), \mathfrak{r}(p) \rangle < 0$, 
this can be written
\[ 
 \hat{z}^p \exp \left( -
\sum_{
\substack{p' \in P_{\varphi_{\tau_\fd}} \\ \mathfrak{r}(p') =\ell m(\hat{H}_\fd)}} 
\frac{1-q^{-\ell |\langle m(\hat{H}_\fd), \mathfrak{r}(p)
\rangle|}}{1-q^{-\ell}}q^{-\ell}(q^\ell-1)H_{p'} \hat{z}^{p'}
\right)\] 
\[=\hat{z}^p \exp \left( -
\sum_{
\substack{p' \in P_{\varphi_{\tau_\fd}} \\ \mathfrak{r}(p') =\ell m(\hat{H}_\fd)}} 
\sum_{j=0}^{|\langle m(\hat{H}_\fd), \mathfrak{r}(p)
\rangle| -1}
(q^{-j-1})^\ell 
(q^\ell-1)H_{p'} \hat{z}^{p'}
\right)\] 
\[=\hat{z}^p \prod_{j=0}^{|\langle m(\hat{H}_\fd) , \mathfrak{r}(p) \rangle| -1} 
\hat{f}_{\fd}( q^{-j-1}\hat{z})^{-1} \,.\]
\end{proof}

One can equivalently write $\hat{\Phi}_{\hat{H}_\fd}$ as
\[
\hat{z}^p \mapsto 
\begin{dcases}
\left( \prod_{j=0}^{\langle m(\hat{H}_\fd), \mathfrak{r}(p) \rangle -1}
\hat{f}_\rho(q^{-j-1} z) \right) \hat{z}^p & \text{if  } \langle m(\hat{H}_\fd), \mathfrak{r}(p) \rangle \geqslant 0 \\
\left( \prod_{j=0}^{|\langle m(\hat{H}_\fd),
\mathfrak{r}(p) \rangle| -1} 
\hat{f}_\rho( q^{j}z)^{-1} \right) \hat{z}^p & \text{if  } 
\langle m(\hat{H}_\fd), \mathfrak{r}(p) \rangle <0 \,.
\end{dcases}\]

A direct application of the definition of 
$\hat{f}_\fd$ gives the following Lemma.

\begin{lem}\label{lem_ex_ch3}
If
\[\hat{H}=i \sum_{\ell \geqslant 1} \frac{(-1)^{\ell-1}}{\ell}\frac{\hat{z}^{-\ell \varphi (m_\fd)}}{2 \sin \left( \frac{\ell \hbar}{2} \right)}  
=
-\sum_{\ell \geqslant 1} \frac{(-1)^{\ell-1}}{\ell}\frac{\hat{z}^{-\ell\varphi(m_\fd)}}{q^{\frac{\ell}{2}}
-q^{-\frac{\ell}{2}}} \,,\]
where $q=e^{i\hbar}$,
we have $m(\hat{H})=m_\fd$ and
\[\hat{f} = \exp \left( - 
\sum_{\ell \geqslant 1} \frac{(-1)^{\ell-1}}{\ell}
\frac{q^{-\ell}-1}{q^{\frac{\ell}{2}}
-q^{-\frac{\ell}{2}}} \hat{z}^{-\ell \varphi (m_\fd)}
\right)= 1+ q^{-\frac{1}{2}}\hat{z}^{-\varphi(m_\fd)} \,.\]

If
\[\hat{H}=i \sum_{\ell \geqslant 1} \frac{(-1)^{\ell-1}}{\ell}\frac{\hat{z}^{\ell\varphi (m_\fd)}}{2 \sin \left( \frac{\ell \hbar}{2} \right)}  
=
-\sum_{\ell \geqslant 1} \frac{(-1)^{\ell-1}}{\ell}\frac{\hat{z}^{\ell \varphi(m_\fd)}}{q^{\frac{\ell}{2}}
-q^{-\frac{\ell}{2}}} \,,\]
where $q=e^{i\hbar}$,
we have $m(\hat{H})=-m_\fd$ and
\[\hat{f} = \exp \left( - 
\sum_{\ell \geqslant 1} \frac{(-1)^{\ell-1}}{\ell}
\frac{q^{-\ell}-1}{q^{\frac{\ell}{2}}
-q^{-\frac{\ell}{2}}} \hat{z}^{\ell \varphi (m_\fd)}
\right)= 1+ q^{-\frac{1}{2}}\hat{z}^{\varphi(m_\fd)} \,.\]
\end{lem}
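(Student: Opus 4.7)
The plan is to verify both identities by direct substitution into the definition of $\hat{f}$, reducing everything to a single application of the series $\log(1+u)=\sum_{\ell\geqslant 1}(-1)^{\ell-1}u^{\ell}/\ell$. I treat the first identity; the second is identical after changing a sign on the exponent.

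First I identify $m(\hat{H})$. Since $\varphi$ is a section of the projection $\mathfrak{r}\colon\cP\to\Lambda$, we have $\mathfrak{r}(-\ell\varphi(m_\fd))=-\ell\, m_\fd$, so the only nonzero Fourier coefficients $H_p$ sit at exponents $p$ with $\mathfrak{r}(p)\in\Z_{<0}\,m_\fd$. The convention of Section \ref{section_aut_ch3} writes $\mathfrak{r}(p)=\ell\,m(\hat{H})$ with $\ell\leqslant 0$, which forces $m(\hat{H})=m_\fd$.

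Next I plug into the defining formula for $\hat{f}_\fd$. With $p=-\ell\varphi(m_\fd)$ so that $\mathfrak{r}(p)=(-\ell)\cdot m(\hat{H})$, one gets
\[
\hat{f}=\exp\!\left(\sum_{\ell\geqslant 1}(q^{-\ell}-1)\,\Bigl(-\tfrac{(-1)^{\ell-1}}{\ell}\cdot\tfrac{1}{q^{\ell/2}-q^{-\ell/2}}\Bigr)\hat{z}^{-\ell\varphi(m_\fd)}\right).
\]
The elementary simplification
\[
\frac{q^{-\ell}-1}{q^{\ell/2}-q^{-\ell/2}}=\frac{q^{-\ell/2}(q^{-\ell/2}-q^{\ell/2})}{q^{\ell/2}-q^{-\ell/2}}=-q^{-\ell/2}
\]
then turns the argument of the exponential into $\sum_{\ell\geqslant 1}\tfrac{(-1)^{\ell-1}}{\ell}\,q^{-\ell/2}\hat{z}^{-\ell\varphi(m_\fd)}$.

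Finally, since $\langle \mathfrak{r}(\varphi(m_\fd)),\mathfrak{r}(\varphi(m_\fd))\rangle=\langle m_\fd,m_\fd\rangle=0$ by skew-symmetry, the elements $\hat{z}^{-\ell\varphi(m_\fd)}$ are mutually commuting powers of $\hat{z}^{-\varphi(m_\fd)}$, and the product law gives $q^{-\ell/2}\hat{z}^{-\ell\varphi(m_\fd)}=\bigl(q^{-1/2}\hat{z}^{-\varphi(m_\fd)}\bigr)^{\ell}$. Setting $u=q^{-1/2}\hat{z}^{-\varphi(m_\fd)}$, the argument of the exponential is exactly $\log(1+u)$ in the $J$-adic completion, and exponentiating yields $\hat{f}=1+q^{-1/2}\hat{z}^{-\varphi(m_\fd)}$ as claimed. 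The second statement is obtained by repeating the argument verbatim with $\varphi(m_\fd)$ in place of $-\varphi(m_\fd)$, the only change being that now $\mathfrak{r}(p)\in\Z_{>0}\,m_\fd$ forces $m(\hat{H})=-m_\fd$. There is no real obstacle here; the only point worth checking is the trivial commutativity of the monomials $\hat{z}^{\pm\ell\varphi(m_\fd)}$, which justifies applying the scalar Taylor series $\log(1+u)$ inside the noncommutative completion.
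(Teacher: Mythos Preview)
Your proof is correct and follows exactly the approach the paper intends: the paper's own proof is the single sentence ``A direct application of the definition of $\hat{f}_\fd$ gives the following Lemma,'' and you have carried out precisely that direct application, correctly identifying $m(\hat{H})$, substituting into the formula $\hat{f}_\fd=\exp\bigl(\sum (q^\ell-1)H_p\hat{z}^p\bigr)$ with the convention $\ell\leqslant 0$, simplifying via $(q^{-\ell}-1)/(q^{\ell/2}-q^{-\ell/2})=-q^{-\ell/2}$, and recognizing the resulting series as $\log(1+u)$.
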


\subsection{Gluing}
\label{subsection_gluing_ch3}

We fix a quantum scattering diagram $\hat{\fD}$
for the data $(B,\Sigma)$, $P$, $J$ and $\varphi$, and an ideal $I$ of radical $J$.

Let $\rho$ be a one-dimensional cone of 
$\Sigma$, bounding the two-dimensional cones 
$\sigma_+$ and $\sigma_-$, such that 
$\sigma_-$, $\rho$, $\sigma_+$ are 
in anticlockwise order.
Identifying $X$ with $\hat{z}^{\varphi_\rho
(m_\rho)}$,
we define $\hat{f}_{\rho^\out} \in R_{I}^{\hbar}[X^{-1}]$ by
\[ \hat{f}_{\rho^\out} \coloneqq \prod_{
\substack{\fd \in \hat{\fD}, \fd=\rho
\\ \text{outgoing}}}
\hat{f}_\fd \mod I \,,\]
where the product is over the outgoing 
rays of $\hat{\fD}$ of support $\rho$,
and we define 
$\hat{f}_{\rho^\iin} \in R_{I}^{\hbar}[X]$ by
\[ \hat{f}_{\rho^\iin} \coloneqq \prod_{
\substack{\fd \in \hat{\fD}, \fd=\rho
\\ \text{ingoing}}}
\hat{f}_\fd \mod I \,,\]
where the product is over the ingoing 
rays of $\hat{\fD}$ of support $\rho$.

By  \cref{subsection_building_blocks}, we then have $R_I^{\hbar}$-algebras 
$R^{\hbar}_{\sigma_+,I}$,
$R^{\hbar}_{\sigma_-,I}$,
$R^{\hbar}_{\rho,I}$.

Let $(\fd, \hat{H}_{\fd})$ be a ray of 
$\hat{\fD}$ 
such that $\tau_\fd=\sigma$ is a
two-dimensional cone of 
$\Sigma$.
Let $m_\fd \in \Lambda_{\tau_\fd}$ be the primitive generator of $\fd$ pointing away from the origin.
Let $\gamma$ be a path in 
$B_0$ which crosses $\fd$ transversally at time $t_0$. We define
\[ \hat{\theta}_{\gamma, \fd} 
\colon R^{\hbar}_{\sigma, I}
\rightarrow R^{\hbar}_{\sigma,I} \,,\]
\[ \hat{z}^p \mapsto \hat{\Phi}_{\hat{H}_\fd}^\epsilon (\hat{z}^p) \,,\]
where $\epsilon \in \{ \pm 1\}$
is the sign of 
$-\langle m(\hat{H}_\fd), \gamma'(t_0) \rangle$.

Let $\hat{\fD}_I \subset \hat{\fD}$ be the finite set of rays $(\fd, \hat{H}_\fd)$ with 
$\hat{H}_\fd \neq 0 \mod I$,
that is,
$\hat{f}_\fd \neq 1 \mod I$.
If $\gamma$ is a path in $B_0$
entirely contained in the interior of a two-dimensional cone $\sigma$ of $\Sigma$, and crossing elements of $\fD_I$ transversally, we define 
\[ \hat{\theta}_{\gamma, \hat{\fD}_I}
\coloneqq \hat{\theta}_{\gamma, \fd_n} \circ 
\dots \circ \hat{\theta}_{\gamma, \fd_1} \,, \]
where $\gamma$ crosses the elements
$\fd_1, \dots, \fd_n$ of $\hat{\fD}_I$
in the given order.

For every $\sigma$ two-dimensional cone of $\Sigma$, bounded by rays $\rho_R$ and $\rho_L$, such that 
$\rho_R,\sigma,\rho_L$ are in  
anticlockwise order, we choose a path 
$\gamma_\sigma \colon [0,1]
\rightarrow B_0$ whose image is 
entirely contained in the interior of 
$\sigma$, with 
$\gamma(0)$ close to $\rho_R$ and $\gamma(1)$ close to $\rho_L$, such that $\gamma_\sigma$ crosses every ray of $\hat{\fD}_I$ contained in $\sigma$ 
transversally exactly once.
Let 
\[ \hat{\theta}_{\gamma_\sigma,\hat{\fD}_I}
\colon R_{\sigma,I}^{\hbar}
\rightarrow R_{\sigma,I}^{\hbar} \]
be the corresponding automorphism.
In the classical limit, $\hat{\theta}_{\gamma, \hat{\fD}_I}$
induces an automorphism
$\theta_{\gamma,\fD_I}$ of $U_{\sigma,I}$.
Gluing together the open sets 
$U_{\sigma,I} \subset U_{\rho_R,I}$ and 
$U_{\sigma,I} \subset U_{\rho_L,I}$ along these automorphisms, we get the scheme $X_{I,\fD}^\circ$ defined in
\cite{MR3415066}. 

Recall from the end of 
\cref{subsection_building_blocks}
that by Ore localization the algebras 
$R_{\sigma,I}^{\hbar}$ and 
$R_{\rho,I}^{\hbar}$
produce sheaves 
$\cO_{U_{\sigma,I}}^{\hbar}$
and $\cO_{U_{\rho,I}}^{\hbar}$
on $U_{\sigma,I}$ and 
$U_{\rho,I}$
respectively.
Using 
$\hat{\theta}_{\gamma_\sigma, \hat{\fD}_I}$, we can glue together the sheaves $\cO_{U_{\rho,I}}^{\hbar}$
to get a sheaf of $R_I^{\hbar}$-algebras
$\cO_{X_{I,\fD}^\circ}^{\hbar}$ on 
$X_{I,\fD}^\circ$.

From the fact that the sheaves
$\cO_{U_{\rho,I}}^{\hbar}$
are deformation quantizations of 
$U_{\rho,I}$, we deduce that the sheaf
$\cO_{X_{I,\fD}^\circ}^{\hbar}$
is a deformation quantization of 
$X_{I,\fD}^\circ$.
In particular, we have $\cO^{\hbar}_{X_{I,\fD}^\circ}/
\hbar 
\cO^{\hbar}_{X_{I,\fD}^\circ}
= \cO_{X_{I,\fD}^\circ}$
and $\cO^{\hbar}_{X_{I,\fD}^\circ}$
is a sheaf a flat 
$R_I^{\hbar}$-algebras.

Let $\rho$ be a one-dimensional cone
of $\Sigma$.
Let $\sigma_+$ and 
$\sigma_-$ be the two two-dimensional cones 
of $\Sigma$ bounding $\rho$, and let 
$\rho_+$ and $\rho_-$ be the other boundary rays of $\sigma_+$ and $\sigma_-$
respectively, such that 
$\rho_-$, $\rho$ and
$\rho_+$ are in anticlockwise order.
According to \cite[Remark 2.6]{MR3415066},
we have, in $U_{\rho,I}$,
\[ U_{\rho_-,I} \cap U_{\rho_+,I}
\simeq (\mathbb{G}_m)^2 \times \Spec (R_I)_{z^{\kappa_{\rho,\varphi}}}\,,\]
where $(R_I)_{z^{\kappa_{\rho,\varphi}}}$
is the localization of $R_I$ defined by inverting $z^{\kappa_{\rho,\varphi}}$.
Similarly, the restriction of 
$\cO_{X_{I,\fD}^\circ}^{\hbar}$
to $U_{\rho_-,I} \cap U_{\rho_+,I}$
is the Ore localization of 
$\kk_{\hbar}[M] \hat{\otimes} (R_I)_{z^{\kappa_{\rho,\varphi}}}$,
where $M=\Z^2$ is the character lattice of 
$(\mathbb{G}_m)^2$, equipped with the standard unimodular integral symplectic pairing. We have a natural identification 
$M=\Lambda_\rho$. Restricted to 
$\kk_{\hbar}[M] \hat{\otimes} (R_I)_{z^{\kappa_{\rho,\varphi}}}$, and assuming that 
$\hat{f}_{\rho^\iin} = 1 \mod \hat{z}^{\kappa_{\rho,\varphi}}$
and
\mbox{$\hat{f}_{\rho^\out} = 1 \mod \hat{z}^{\kappa_{\rho,\varphi}}$}, the expression $\hat{\psi}_{\rho_+} \circ \hat{\psi}_{\rho_-}^{-1}$ makes sense\footnote{Without restriction,
$\hat{\psi}_{\rho_-}$ is not invertible and so $\hat{\psi}_{\rho_-}^{-1}$ does not make sense. } and is given by 
\[(\hat{\psi}_{\rho_+} \circ \hat{\psi}_{\rho_-}^{-1}) 
(\hat{z}^{\varphi_\rho(m_\rho)})=
\hat{z}^{\varphi_\rho(m_\rho)} \,,\]
\[(\hat{\psi}_{\rho_+} \circ \hat{\psi}_{\rho_-}^{-1}) 
(\hat{z}^{\varphi_\rho(m_{\rho_-})})
= 
\hat{f}_{\rho^\out}
(\hat{z}^{\varphi_\rho(m_\rho)} ) 
\hat{z}^{\varphi_\rho(m_{\rho_-})}
\hat{f}_{\rho^\iin}
(\hat{z}^{\varphi_\rho(m_\rho)} )
\,,\]
\[(\hat{\psi}_{\rho_+} \circ \hat{\psi}_{\rho_-}^{-1})
(\hat{z}^{\varphi_\rho(m_{\rho_+})}) 
=
\hat{f}_{\rho^\iin}^{-1}
(\hat{z}^{\varphi_\rho(m_\rho)} )
\hat{z}^{\varphi_\rho(m_{\rho_+})}
\hat{f}_{\rho^\out}^{-1}
(\hat{z}^{\varphi_\rho(m_\rho)} )\,.\]
As $\langle m_{\rho},m_{\rho_-}\rangle
=-1$ and 
$\langle m_{\rho},m_{\rho_+} \rangle=1$,
this implies that $\hat{\psi}_{\rho_+} \circ \hat{\psi}_{\rho_-}^{-1}$
coincides with the transformation
\[\hat{\theta}_{\gamma,\rho}
= \prod_{\fd \in \fD, \fd=\rho}
\hat{\theta}_{\gamma,\fd},,\] 
where $\hat{\theta}_{\gamma,\fd}$
is defined by the same formulas as above and with $\gamma$ a path intersecting $\rho$ at a single point and going from 
$\sigma_-$ to $\sigma_+$.

\subsection{Result of the gluing for $I=J$.}
\label{gluing_I=J}

Assume $r \geqslant 3$
and $\kappa_{\rho,\varphi} \in J$ for every $\rho$ one-dimensional cone of 
$\Sigma$.
The Lemma
\ref{lem_special_fiber} below
gives an explicit 
description of $\cO^{\hbar}_{X_{I,\fD}^\circ}$ for $I=J$.

Denote by $\kk[\Sigma]$ the 
$\kk$-algebra with a $\kk$-basis
$\{z^m \,|m \in B(\Z)\}$ with multiplication given by
\[z^m \cdot z^{m'} =
\begin{dcases}
z^{m+m'} & \text{if $m$ and $m'$ lie in a common cone of $\Sigma$} \\
0 & \text{otherwise.} 
\end{dcases}\]
Let $0$ be the closed point of
$\Spec \kk[\Sigma]$ whose ideal is generated by 
$\{z^m \,|m \neq 0\}$.
Denote $R_J[\Sigma]
\coloneqq R_J \otimes_{\kk} \kk[\Sigma]$.
According to 
\cite[Lemma 2.12]{MR3415066}, we have
\[ X_J^\circ \simeq  
(\Spec R_J[\Sigma])-((\Spec R_J)
\times \{0\})\,.\]

Denote by $\kk_{\hbar}[\Sigma]$ the 
$\kk_{\hbar}$-algebra with a $\kk_{\hbar}$-basis
$\{\hat{z}^m \,|m \in B(\Z)\}$ with multiplication given by
\[\hat{z}^m \cdot \hat{z}^{m'} =
\begin{dcases}
q^{\frac{1}{2} \langle m,m'\rangle}
\hat{z}^{m+m'} & \text{if $m$ and $m'$ lie in a common cone of $\Sigma$} \\
0 & \text{otherwise.} 
\end{dcases}\]
Denote $R_J^{\hbar}[\Sigma]
\coloneqq R_J \hat{\otimes}_{\kk} 
\kk_{\hbar}[\Sigma]$.

\begin{lem}\label{lem_special_fiber}
Assume $r \geqslant 3$
and $\kappa_{\rho,\varphi} \in J$ for every $\rho$ one-dimensional cone of 
$\Sigma$.
Then $\Gamma (X_{J,\fD}^\circ, 
\cO_{X_{J,\fD}^\circ}^{\hbar})
=R_J^{\hbar}[\Sigma]$,
and the sheaf $\cO_{X_{J,\fD}^\circ}^{\hbar}$
is the restriction to 
$X_J^\circ$ of the Ore localization
(see \cref{section_ore}) 
of $R_J^{\hbar}[\Sigma]$
over $\Spec R_J[\Sigma]$.
\end{lem}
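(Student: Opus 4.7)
The plan is to adapt the proof of Lemma 2.12 of \cite{MR3415066} to the quantum setting, exploiting the fact that under the hypothesis $\kappa_{\rho,\varphi}\in J$ for every one-dimensional cone $\rho$ of $\Sigma$, both the scattering diagram and the ring structure degenerate substantially modulo $J$. First I would examine the local algebras. For any two-dimensional cone $\sigma$, the algebra $R_{\sigma,J}^{\hbar}=\kk_{\hbar}[P_{\varphi_\sigma}]/J$ is just the quantum torus $R_J^{\hbar}[\Lambda_\sigma]$. For a one-dimensional cone $\rho$, since $\hat{z}^{\kappa_{\rho,\varphi}}\equiv 0\pmod{J}$, the relations defining $R_{\rho,J}^{\hbar}$ collapse to $XX_+=qX_+X$, $XX_-=q^{-1}X_-X$, and $X_+X_-=X_-X_+=0$, so that $R_{\rho,J}^{\hbar}$ becomes the pushout of the two quantum half-space algebras $R_{\sigma_\pm,J}^{\hbar}$ along the quantum torus $\kk_{\hbar}[X^{\pm}]$, with $X_+$ and $X_-$ mutually annihilating.

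Next I would check that the scattering automorphisms trivialize. By Definition \ref{def_quantum_scattering}, every ray $(\fd,\hat{H}_\fd)$ whose support $\tau_\fd$ is a two-dimensional cone has $\hat{H}_\fd\equiv 0\pmod{J}$, so the corresponding $\hat{\theta}_{\gamma,\fd}$ reduces to the identity; hence each gluing automorphism $\hat{\theta}_{\gamma_\sigma,\hat{\fD}_J}$ is trivial. Consequently the sheaf $\cO_{X_{J,\fD}^\circ}^{\hbar}$ is obtained by trivially gluing the local sheaves $\cO_{U_{\rho,J}}^{\hbar}$ built from the algebras above by Ore localization (Definition \ref{def_loc}).

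Then I would construct the global-section identification. Any element $\sum_m a_m \hat{z}^m\in R_J^{\hbar}[\Sigma]$ decomposes according to the cones of $\Sigma$; the summand supported on the closure of a two-dimensional cone $\sigma$ maps naturally to $R_{\sigma,J}^{\hbar}$ after identifying $P_{\varphi_\sigma}/J\simeq (P/J)\oplus\Lambda_\sigma$, and the summands supported on neighbouring cones $\sigma_\pm$ are compatible in $R_{\rho,J}^{\hbar}$ precisely because $X_+X_-=0$ matches the rule $\hat{z}^m\cdot\hat{z}^{m'}=0$ when $m,m'$ do not lie in a common cone. This produces a map $R_J^{\hbar}[\Sigma]\hookrightarrow \Gamma(X_{J,\fD}^\circ,\cO_{X_{J,\fD}^\circ}^{\hbar})$, which is an isomorphism because the classical analogue (GHK Lemma 2.12) holds modulo $\hbar$, the two sides are both $\hbar$-adically complete and $\kk_{\hbar}$-flat (by Nakayama with $\hbar$), and the hypothesis $r\geqslant 3$ ensures the hole over $0$ does not create extra global sections. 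The sheaf statement then follows at once from the characterization of $\cO_{U_{\sigma,J}}^{\hbar}$ and $\cO_{U_{\rho,J}}^{\hbar}$ via Ore localization of the corresponding algebras.

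The main obstacle is bookkeeping the $q$-factors $q^{\frac{1}{2}\langle m,m'\rangle}$ on $\kk_{\hbar}[\Sigma]$ and verifying they agree with those inherited from $\kk_{\hbar}[P_{\varphi_\sigma}]$ under the splitting induced by $\varphi$; this amounts to noting that the skew-pairing on $\cP$ descends to the symplectic pairing on $\Lambda$ after quotienting by $\underline{P}^{\gp}$, so the two multiplications coincide on the relevant monomials. Once this compatibility is secured, the quantum gluing reproduces $R_J^{\hbar}[\Sigma]$ exactly in parallel to the classical picture.
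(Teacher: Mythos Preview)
Your proposal is correct and follows essentially the same approach as the paper: both note that rays interior to two-dimensional cones have $\hat{H}_\fd\equiv 0\bmod J$ so the gluing automorphisms trivialize, and that $\kappa_{\rho,\varphi}\in J$ forces $X_+X_-=X_-X_+=0$ in $R_{\rho,J}^{\hbar}$, whence the maps $\hat\psi_{\rho,\pm}$ simplify and the pieces glue to $R_J^{\hbar}[\Sigma]$. The only difference is cosmetic: you finish by reducing to the classical GHK Lemma~2.12 via $\hbar$-flatness and Nakayama, while the paper concludes directly after writing down the simplified maps $\hat\psi_{\rho,\pm}$.
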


\begin{proof}
By definition of a quantum scattering diagram, 
if $\fd$ is contained in the interior of a two-dimensional cone of $\Sigma$, we have 
$\hat{H}_\fd = 0 \mod J$ and 
so the corresponding automorphism $
\hat{\Phi}_{\hat{H}_\fd}$ is the identity.
As we are assuming $\kappa_{\rho,\varphi}
\in J$, $R_{\rho,J}^{\hbar}$ is the $R_J^{\hbar}$-algebra generated by formal variables 
$X_+$, $X_-$ and $X$, with $X$ invertible, and with relations 
\[XX_+ =q XX_+ \,,\]
\[XX_-=q^{-1}X_- X \,,\]
\[X_+ X_-=X_- X_+=0 \,,\]
where $q=e^{i \hbar}$.
Let $\sigma_+$ and 
$\sigma_-$ be the two two-dimensional cones 
of $\Sigma$ bounding $\rho$, and let 
$\rho_+$ and $\rho_-$ be the other boundary rays of $\sigma_+$ and $\sigma_-$
respectively, such that 
$\rho_-$, $\rho$ and
$\rho_+$ are in anticlockwise order.

From 
$\varphi_\rho(m_{\rho_-})
+\varphi_\rho(m_{\rho_+})
=\kappa_{\rho, \varphi}-D_\rho^2 \varphi_\rho(m_\rho)$ and 
$\kappa_{\rho, \varphi} \in J$, we deduce that $\hat{z}^{\varphi_\rho(m_{\rho_-})}
\hat{z}^{\varphi_\rho(m_{\rho_+})}
=0$ in $R_{\rho,I}^{\hbar}$, 
$R_{\sigma_-I}^{\hbar}$ and 
$R_{\sigma_+,I}^{\hbar}$.
As $\hat{z}^{\varphi_\rho(m_{\rho_-})}$
is invertible in $R_{\sigma_-I}^{\hbar}$,
we have $\hat{z}^{\varphi_\rho(m_{\rho_+})}
=0$ in $R_{\sigma_-I}^{\hbar}$. 
Similarly, as 
$\hat{z}^{\varphi_\rho(m_{\rho_+})}$
is invertible in $R_{\sigma_+I}^{\hbar}$,
we have $\hat{z}^{\varphi_\rho(m_{\rho_-})}
=0$ in $R_{\sigma_+I}^{\hbar}$.

So the map
$\hat{\psi}_{\rho,-} \colon
R_{\rho,J}^{\hbar}
\rightarrow R_{\sigma_-,J}^{\hbar}$ is given by 
$\hat{\psi}_{\rho,-}(X)=
\hat{z}^{\varphi_\rho(m_\rho)}$,
$\hat{\psi}_{\rho,-}(X_-)=
\hat{z}^{\varphi_\rho(m_{\rho_-})}$,
$\hat{\psi}_{\rho,-}(X_+)=0$.
Similarly, 
the map
$\hat{\psi}_{\rho,+} \colon
R_{\rho,J}^{\hbar}
\rightarrow R_{\sigma_+,J}^{\hbar}$ is given by 
$\hat{\psi}_{\rho,+}(X)=
\hat{z}^{\varphi_\rho(m_\rho)}$,
$\hat{\psi}_{\rho,+}(X_-)=0$,
$\hat{\psi}_{\rho,+}(X_+)=
\hat{z}^{\varphi_\rho(m_{\rho_+})}$.
The result follows.
\end{proof}

\subsection{Quantum broken lines and theta functions}

\label{section_broken_theta}

We fix $(Y,D)$ a Looijenga pair, 
its tropicalization $(B, \Sigma)$, 
a toric monoid $P$, a radical 
monomial ideal $J$ of $P$, $\varphi$ a 
$P_\R^{\mathrm{gp}}$-valued multivalued convex $\Sigma$-piecewise linear function
$\varphi$ on $B$, and $\hat{\fD}$
a quantum scattering diagram for the data 
$(B,\Sigma)$, $P$, $J$ and $\varphi$.

Quantum broken lines and quantum theta functions have been studied by Mandel
\cite{mandel2015scattering}, for smooth integral affine manifolds.
We make below the easy combination of the 
notion of quantum broken lines and theta functions used by 
\cite{mandel2015scattering} with the notion of 
classical broken lines and theta functions  used in \cite[\S 2.3]{MR3415066}
for
the tropicalization 
$B$ of a Looijenga pair.

\begin{defn}
\label{defn_broken_line_ch3}
A \emph{quantum broken line} 
of charge $p \in B_0(\Z)$
with endpoint $Q$ in 
$B_0$
is a proper continuous 
piecewise integral affine map
\[ \gamma \colon
(-\infty, 0]
\rightarrow B_0 \]
with only finitely 
many domains of linearity,
together with, for each 
$L \subset (-\infty, 0]$
a maximal
connected domain of linearity
of $\gamma$,
a choice of monomial
$m_L=c_L \hat{z}^{p_L}$
where 
$c_L \in \kk_{\hbar}^{*}$
and
$p_L \in 
\Gamma (L, \gamma^{-1}(\cP)|_L)$, such that
the following statements hold. 
\begin{itemize}
\item For each $L$
and $t \in L$, we have 
$-\mathfrak{r}(p_L)=\gamma'(t)$, that is,
the direction of the line is determined by the monomial attached to it.
\item We have $\gamma(0)=Q
\in B_0$.
\item For the unique unbounded domain of linearity $L$, 
$\gamma|_L$ goes off for 
$t \rightarrow -\infty$ 
to infinity in a two-dimensional cone $\sigma$
of $\Sigma$
containing
$p$ and 
$m_L=\hat{z}^{\varphi_\sigma(p)}$, that is,
the charge $p$ is the asymptotic direction of the broken line.
\item Let $t \in (-\infty,0)$
be a point at which $\gamma$ is not linear, passing from the domain of
linearity $L$ to 
the domain of linearity $L'$.
Let $\tau$ be a cone of $\Sigma$ 
containing $\gamma(t)$.
Let 
$(\fd_1,\hat{H}_{\fd_1}),\dots,
(\fd_N,\hat{H}_{\fd_N})$ be the rays of 
$\hat{\fD}$ that contain $\gamma(t)$.
Then $\gamma$ passes from one side of these rays to the other side at time $t$.

Expand the product of 
\[  \prod_{\substack{
1 \leqslant k \leqslant N \\
\langle m(H_{\fd_k}),
\mathfrak{r}(p_L)
\rangle>0}}
\prod_{j=0}^{\langle m(H_{\fd_k}), \mathfrak{r}(p_L) \rangle -1}
\hat{f}_{\fd_k}(q^j \hat{z})\]
and
\[ \prod_{\substack{
1 \leqslant k'
\leqslant N \\
\langle m(H_{\fd_{k'}}),
\mathfrak{r}(p_L)
\rangle<0}} 
\prod_{j'=0}^{|\langle m(H_{\fd_{k'}}), \mathfrak{r}(p_L) \rangle| -1}
\hat{f}_{\fd_{k'}}(q^{-j'-1} \hat{z}) \,,\]
as a formal power series in 
$\kk_{\hbar}\widehat{[P_{\varphi_{\tau}}]}$.
Then there is a term $c \hat{z}^s$ in this sum with 
\[m_{L'}=m_L \cdot (c\hat{z}^s) \,.\]
\end{itemize}
\end{defn}

Let $Q \in B - \Supp_I(\hat{\fD})$
be in the interior of a two-dimensional cone 
$\sigma$ of $\Sigma$. Let $\gamma$ be a quantum broken line 
with endpoint $Q$. We denote by
$\Mono(\gamma) \in \kk_{\hbar}[P_{\varphi_\sigma}]$ the monomial attached to the last domain of linearity of $\gamma$.

The following finiteness result is formally identical to \cite[Lemma 2.25]{MR3415066}.

\begin{lem} \label{lem_finite}
Let $Q \in B - \Supp_I(\hat{\fD})$
be in the interior of a two-dimensional cone 
$\sigma$ of $\Sigma$. Fix $p \in B_0(\Z)$.
Let $I$ be an ideal of radical $J$.
Assume that $\kappa_{\rho,\varphi}
\in J$ for at least one ray $\rho$ of 
$\Sigma$. Then the following statements hold.
\begin{itemize}
\item The collection of quantum broken lines
$\gamma$ of charge $p$ with endpoint $Q$ and such that 
$\Mono(\gamma) \notin I \kk_{\hbar}
[P_{\varphi_\sigma}]$ is finite. 
\item If one boundary ray of the connected component of $B-\Supp_I(\hat{\fD})$ containing $Q$ is a ray $\rho$ of
$\Sigma$, then for every quantum broken line 
$\gamma$ of charge $p$ with endpoint $Q$, we have $\Mono(\gamma) \in \kk_{\hbar}[P_{\varphi_\rho}]$.
\end{itemize}
\end{lem}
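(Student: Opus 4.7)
The approach is to reduce the statement to Lemma 2.25 of Gross-Hacking-Keel \cite{MR3415066} by observing that a quantum broken line $\gamma$ is essentially determined by its combinatorial skeleton---the continuous map $\gamma$ together with the integer exponents $p_L \in \cP_L$---plus the scalar coefficients $c_L \in \kk_\hbar^*$. This skeleton coincides with the data of a classical broken line for the classical scattering diagram $\fD$ obtained by taking the $q \to 1$ limit of $\hat{\fD}$: the bending condition in Definition \ref{defn_broken_line_ch3} constrains the increments $s = p_{L'} - p_L$ to lie in the support of the product $\prod_k \prod_j \hat{f}_{\fd_k}(q^{\pm j}\hat{z})$, and this support coincides with that of the corresponding classical product, since the $q$-deformation affects only the coefficients and not the exponents.

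For the first assertion, I would observe that the hypothesis $\Mono(\gamma) \notin I\, \kk_\hbar[P_{\varphi_\sigma}]$ is equivalent to the classical hypothesis $z^{p_L} \notin I\, \kk[P_{\varphi_\sigma}]$ on the final exponent, because $I$ is a monomial ideal generated by elements of $\kk[P]$ and each $c_L$ is a unit in $\kk_\hbar$. Projection to the classical skeleton therefore gives an injection from those quantum broken lines contributing nontrivially modulo $I$ into the set of classical broken lines satisfying the hypothesis of the classical Lemma 2.25; injectivity holds because, for a fixed skeleton, the quantum coefficients $c_L$ are uniquely reconstructed through successive expansion of the products of scattering functions. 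Finiteness in the quantum setting then follows immediately from classical finiteness.

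For the second assertion, I would argue geometrically in the same spirit as \cite{MR3415066}. If the ray $\rho$ bounds the component of $B - \Supp_I(\hat{\fD})$ containing $Q$, then every bending of $\gamma$ must occur on the far side of $\rho$ from $Q$, since the support of $\hat{\fD}$ modulo $I$ does not meet this chamber. Inducting along the linear domains from the unbounded end to the endpoint $Q$, the initial exponent $\varphi_\sigma(p)$ lies in $P_{\varphi_\rho}$ by the chamber constraint, and each bending increment $s$ satisfies $\mathfrak{r}(s) \in \Z\, m_{\fd_k}$ for the ray $\fd_k$ being crossed, hence $p_{L'}$ stays in $P_{\varphi_\rho}$ throughout.

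The only potential subtlety is verifying the compatibility of supports between quantum and classical scattering functions, i.e.\ that the nonzero exponents of $\hat{f}_\fd$ in $P_{\varphi_{\tau_\fd}}$ project to $\Z_{\neq 0}\, m_\fd \subset \Lambda_{\tau_\fd}$, matching the classical case. This is built into Definition \ref{def_quantum_scattering}, so once noted the proof is essentially formal and identical in structure to the classical argument of Gross-Hacking-Keel.
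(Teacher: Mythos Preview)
Your approach aligns with the paper's, which simply states that the proof is identical to that of Lemma 2.25 in \cite{MR3415066}; your explanation of \emph{why}---that the exponent data of a quantum broken line is constrained exactly as in the classical case, and the classical argument is purely about exponents---is the correct justification.

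One small imprecision: you assert that the support of the quantum product $\prod_k\prod_j \hat f_{\fd_k}(q^{\pm j}\hat z)$ coincides with that of the classical product, and hence that quantum broken lines inject into classical ones. But a coefficient in $\kk_\hbar$ can be nonzero while vanishing at $\hbar=0$, so a quantum skeleton need not underlie a genuine classical broken line. The fix is immediate: the proof of Lemma 2.25 in \cite{MR3415066} actually bounds the set of admissible exponent sequences (via convexity of $\varphi$ and the hypothesis $\kappa_{\rho,\varphi}\in J$), not merely the classical broken lines realizing them, and that bound applies to quantum skeletons unchanged. This is what the paper means by ``identical.'' Your sketch of the second assertion is also a bit loose---the initial exponent is $\varphi_{\sigma_p}(p)$ for the cone $\sigma_p$ containing $p$, not $\varphi_\sigma(p)$, and the induction must track parallel transport of $p_L$ across cones---but again the classical argument covers this and transfers verbatim.
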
 

\begin{proof}
Identical to the proof of
\cite[Lemma 2.25]{MR3415066}.
\end{proof}

Let $Q \in B - \Supp_I(\hat{\fD})$
be in the interior of a two-dimensional cone 
$\sigma$ of $\Sigma$. Fix $p \in B_0(\Z)$.
Let $I$ be an ideal of radical $J$.
We define
\[ \Lift_Q(p) \coloneqq \sum_\gamma \Mono(\gamma)
 \in \kk_{\hbar} [P_{\varphi_\sigma}]/I \,, \]
where the sum is over all the quantum
broken lines $\gamma$ of charge $p$ with endpoint $Q$.
According to Lemma 
\ref{lem_finite}, there are only finitely many such $\gamma$ with $\Mono(\gamma)
\notin I \kk_{\hbar}
[P_{\varphi_\sigma}]$ and so $\Lift_Q(p)$ is well defined.

The following definition is formally identical to  
\cite[Definition 2.26]{MR3415066}.

\begin{defn} \label{defn_consistency_ch3}
Assume that $\kappa_{\rho,\varphi}
\in J$ for at least one one-dimensional cone $\rho$ of 
$\Sigma$. We say that a quantum scattering diagram
$\hat{\fD}$ for the data $(B,\Sigma)$, $P$, $J$ and
$\varphi$
is \emph{consistent}
if for every ideal $I$ of $P$ of radical $J$ and for all $p \in B_0(\Z)$, the following holds. Let $Q \in B_0$ be chosen so that the line joining the origin and $Q$ has irrational slope, and $Q' \in B_0$ similarly.

\begin{itemize}
\item If $Q$ and $Q'$ are contained in a common two-dimensional cone 
$\sigma$ of $\Sigma$, then we have 
\[\Lift_{Q'}(p)=\hat{\theta}_{\gamma, \hat{\fD}_I}(\Lift_Q(p))\]
in $R_{\sigma,I}^{\hbar}$, for every 
$\gamma$ path in the interior of $\sigma$ connecting $Q$ and $Q'$, and intersecting transversely the rays of $\hat{\fD}$.
\item If $Q_-$ is contained in a two-dimensional cone $\sigma_-$ of 
$\Sigma$, and $Q_+$ is contained in a two-dimensional cone $\sigma_+$ of $\Sigma$, such that $\sigma_+$ and $\sigma_-$ intersect along a one-dimensional cone $\rho$ of $\Sigma$, and furthermore $Q_-$ and $Q_+$ are contained in connected components of 
$B - \Supp_I(\hat{\fD})$ whose closures contain $\rho$, then 
$\Lift_{Q_+}(p) \in R_{\sigma_+, I}^{\hbar}$
and 
$\Lift_{Q_-}(p) \in R_{\sigma_-, I}^{\hbar}$ are both images under 
$\hat{\psi}_{\rho,+}$ and
$\hat{\psi}_{\rho,-}$ respectively of a single element 
$\Lift_\rho(p) \in R_{\rho,I}^{\hbar}$.
\end{itemize}
 
\end{defn}

The following construction is formally identical to  
\cite[Construction 2.27]{MR3415066}.
Suppose that $D$ has 
$r \geqslant 3$ irreducible components,
and that 
$\hat{\fD}$ is a consistent quantum scattering diagram for the data 
$(B,\Sigma)$, $P$, $J$ and 
$\varphi$. Assume that 
$\kappa_{\rho,\varphi} \in J$ for all 
one-dimensional cones $\rho$ of $\Sigma$.
Let $I$ be an ideal of $P$ of radical 
$J$. We construct below an element 
\[ \hat{\vartheta}_p \in \Gamma (X^\circ_{I,\fD},
\cO_{X^\circ_{I,\fD}}^{\hbar})\]
for each $p \in B(\Z)=B_0(\Z) \cup \{ 0\}$.

We first define $\hat{\vartheta}_0 \coloneqq 1$. 
Let $p \in B_0(\Z)$. 
Recall that $X^\circ_{I,\fD}$ is defined by gluing together schemes $U_{\rho,I}$, indexed by $\rho$ rays of $\Sigma$, 
and that 
$\cO_{X^\circ_{I,\fD}}^{\hbar}$ is defined 
by gluing together sheaves $\cO_{U_{\rho,I}}^{\hbar}$ on $U_{\rho,I}$, such that 
$\Gamma (U_{\rho,I},\cO_{X^\circ_{I,\fD}}^{\hbar})=R_{\rho,I}^{\hbar}$. 
So, to define $\hat{\vartheta}_p$, it is enough to define elements of $R_{\rho,I}^{\hbar}$ compatible with the gluing functions.
But, by definition, the consistency of $\hat{\fD}$ gives us such elements $\Lift_\rho(p) \in 
R_{\rho,I}^{\hbar}$.

The quantum theta functions 
$\hat{\vartheta}_p \in \Gamma (X^\circ_{I,\fD},
\cO_{X^\circ_{I,\fD}}^{\hbar})$
reduce in the classical limit to the theta 
functions 
$\vartheta_p \in \Gamma (X^\circ_{I,\fD},
\cO_{X^\circ_{I,\fD}})$
defined in \cite{MR3415066}.

\subsection{Deformation quantization of the mirror family}
\label{section_deformation_quant_mirror}

Suppose $D$ has $r \geqslant 3$ irreducible components, and let 
$\varphi$ be a $P_\R^{\mathrm{gp}}$-valued convex $\Sigma$-piecewise linear function on $B$ such that
$\kappa_{\rho,\varphi} \in J$ for all 
one-dimensional cones $\rho$ of $\Sigma$.
Let $\hat{\fD}$ be a consistent quantum scattering diagram for the data $(B,\Sigma)$, $P$, $J$ and $\varphi$. 
Let $I$ be an ideal of $P$ of radical $J$.

Denote by
\[ X_{I,\fD} \coloneqq \Spec \Gamma
(X^\circ_{I,\fD},\cO_{X^\circ_{I,\fD}})\]
the affinization of $X^\circ_{I,\fD}$
and $j \colon X_{I,\fD}^\circ \rightarrow X_{I,\fD}$
the affinization morphism. 
It is proved in \cite[ Theorem 2.28]{MR3415066}
that $j$ is an open immersion, that
$j_{*} \cO_{X^\circ_{I,\fD}} 
= \cO_{X_{I,\fD}}$, and that 
$X_I$ is flat over $R_I$.
More precisely, the 
$R_I$-algebra 
\[ A_I \coloneqq \Gamma
(X^\circ_{I,\fD},\cO_{X^\circ_{I,\fD}})
= \Gamma
(X_{I,\fD},\cO_{X_{I,\fD}})\]
is free as $R_I$-module and the set of theta functions 
$\vartheta_p$, $p \in B(\Z)$ is an $R_I$-module basis of $A_I$.

\begin{thm}
\label{thm_construction_ch3}
Suppose $D$ has $r \geqslant 3$ irreducible components, and let 
$\varphi$ be a $P_\R^{\mathrm{gp}}$-valued convex $\Sigma$-piecewise linear function on $B$ such that
$\kappa_{\rho,\varphi} \in J$ for all 
one-dimensional cones $\rho$ of $\Sigma$.
Let $\hat{\fD}$ be a consistent quantum scattering diagram for the data $(B,\Sigma)$, $P$, $J$ and $\varphi$. Let $I$ be an ideal of $P$ of radical $J$. 
Then the following statements hold.
\begin{itemize}
\item The sheaf $\cO^{\hbar}
_{X_{I,\fD}} \coloneqq j_{*}
\cO^{\hbar}_{X^\circ_{I,\fD}}$ of $R_I^{\hbar}$-algebras is a deformation quantization of 
$X_{I,\fD}$ over $R_I$ in the sense of
Definition \ref{def_quant_sheaf}.
\item The  
$R_I^{\hbar}$-algebra 
\[A_I^{\hbar} \coloneqq \Gamma (X_{I,\fD}^\circ, \cO_{X_{I,\fD}^\circ}^{\hbar})
=\Gamma (X_{I,\fD}, \cO_{X_{I,\fD}}^{\hbar})\] is a
deformation quantization of $X_{I,\fD}$
over $R_I$ in the sense of Definition 
\ref{def_quant_affine}.
\item The $R_I^{\hbar}$-algebra 
$A_I^{\hbar}$ is free as $R_I^{\hbar}$-module.
\item  The set of quantum theta functions 
\[\{ \vartheta_p^{\hbar} | \, p\in B(\Z) \}\]
is an $R_I^{\hbar}$-module basis for $A_I^{\hbar}$. 
\end{itemize}
\end{thm}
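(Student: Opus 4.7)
The plan is to follow the classical proof of Theorem 2.28 of \cite{MR3415066} and upgrade each step to the quantum setting, using that $\cO^{\hbar}_{X^\circ_{I,\fD}}$ has already been constructed as a deformation quantization of $\cO_{X^\circ_{I,\fD}}$ in the gluing of Section \ref{subsection_gluing_ch3}, and that the consistency of $\hat{\fD}$ provides the global sections $\hat{\vartheta}_p$ via the elements $\Lift_\rho(p) \in R_{\rho,I}^{\hbar}$. The main work is to establish the basis property of quantum theta functions; flatness in $\hbar$ and all the algebra-theoretic claims then follow by Nakayama-type arguments combined with the corresponding classical results.

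First I would handle the base case $I = J$. Under the assumption $\kappa_{\rho,\varphi}\in J$ for every ray $\rho$, Lemma \ref{lem_special_fiber} identifies $\Gamma(X_{J,\fD}^\circ, \cO^{\hbar}_{X_{J,\fD}^\circ})$ with $R_J^{\hbar}[\Sigma]$, which is free as an $R_J^{\hbar}$-module with basis $\{\hat{z}^m : m \in B(\Z)\}$. By Definition \ref{defn_broken_line_ch3}, for $I = J$ the only quantum broken line contributing to $\Lift_Q(p)$ is the straight one (all rays of $\hat{\fD}$ not along a ray of $\Sigma$ have $\hat{H}_\fd = 0 \bmod J$), so $\hat{\vartheta}_p$ reduces exactly to the monomial $\hat{z}^m$ for $m$ the unique point of $B(\Z)$ corresponding to $p$. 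This gives both the freeness and the basis claim for $I = J$.

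For general ideals $I$ of radical $J$, I would proceed by induction on the length of $R_I$ as a $R_J$-module, using a chain of ideals $I = I_0 \subsetneq I_1 \subsetneq \cdots \subsetneq I_N = J$ with each $I_k/I_{k+1}$ of length one. The short exact sequence $0 \to (I_{k+1}/I_k) \to R_{I_k}^{\hbar} \to R_{I_{k+1}}^{\hbar} \to 0$ tensors compatibly with the construction on each open piece $R_{\sigma,I}^{\hbar}$ and $R_{\rho,I}^{\hbar}$, since these are flat over $R_I^{\hbar}$ by construction. Applying the five lemma to the resulting sequence
\begin{equation*}
0 \to A^{\hbar}_{I_k, \fD} \otimes (I_{k+1}/I_k) \to A^{\hbar}_{I_k,\fD} \to A^{\hbar}_{I_{k+1},\fD} \to 0
\end{equation*}
together with the finiteness statement of Lemma \ref{lem_finite} (which guarantees that $\Lift_Q(p)$ has only finitely many contributing broken lines mod $I$), one deduces inductively that $\{\hat{\vartheta}_p\}_{p \in B(\Z)}$ is an $R_I^{\hbar}$-module basis of $A_I^{\hbar}$ and that $A_I^{\hbar}$ is flat over $R_I^{\hbar}$.

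Finally, for the sheaf statement and the identification with the affinization, I would apply the classical Theorem 2.28 of \cite{MR3415066} mod $\hbar$ to get that $j$ is an open immersion with $j_{*}\cO_{X_{I,\fD}^\circ} = \cO_{X_{I,\fD}}$, and then lift: since $A_I^{\hbar}$ is $\hbar$-complete and flat over $R_I^{\hbar}$ with $A_I^{\hbar}/\hbar = A_I$, taking global sections of $\cO^{\hbar}_{X_{I,\fD}^\circ}$ commutes with reduction mod $\hbar$, so $\Gamma(X^\circ_{I,\fD}, \cO^{\hbar}_{X^\circ_{I,\fD}}) = A_I^{\hbar}$ deformation-quantizes $\Gamma(X^\circ_{I,\fD}, \cO_{X^\circ_{I,\fD}}) = A_I$. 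The equivalence of Lemma \ref{lem_comp_affine} then turns this into the sheaf quantization $\cO^{\hbar}_{X_{I,\fD}} = j_{*}\cO^{\hbar}_{X^\circ_{I,\fD}}$. The expected main obstacle is the inductive step showing that no unexpected global sections appear at a higher order in $I$ that would break the theta-function basis; this is precisely controlled by the consistency of $\hat{\fD}$ and the broken-line finiteness Lemma \ref{lem_finite}, which is why the consistency result of Theorem \ref{thm_consistency} (proved in Section \ref{section_canonical}) is the real engine behind this theorem.
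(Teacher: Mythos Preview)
Your base case $I=J$ matches the paper's proof exactly. For general $I$, however, there is a genuine gap in your inductive step: you assert the short exact sequence
\[
0 \to A^{\hbar}_{J} \otimes (I_{k+1}/I_k) \to A^{\hbar}_{I_k} \to A^{\hbar}_{I_{k+1}} \to 0,
\]
but while the analogous sequence is exact on each local piece $R_{\sigma,I}^{\hbar}$, $R_{\rho,I}^{\hbar}$ by flatness, passing to global sections over $X^\circ_{I_k,\fD}$ only yields left exactness. Surjectivity of $A^{\hbar}_{I_k} \to A^{\hbar}_{I_{k+1}}$ is precisely the statement that every global section over the smaller thickening lifts, and this is the heart of the theorem, not a formal consequence of local flatness or of the five lemma.

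The paper closes this gap with a geometric input you do not invoke: the fibers of $X_{J,\fD} \to \Spec R_J$ satisfy Serre's condition $S_2$ (imported from \cite{MR1923963} via \cite{MR3415066}), and the complement $X_{J,\fD} \setminus X^\circ_{J,\fD}$ is finite over the base, hence of codimension $\geq 2$ in each fiber. Together these give $j_*j^*\cO^{\hbar}_{X_{J,\fD}} = \cO^{\hbar}_{X_{J,\fD}}$ on the central fiber. The paper then proves a dedicated lemma---the quantum analogue of Lemma 2.29 of \cite{MR3415066}---which runs an induction on the powers of the nilpotent ideal $\cI$ defining $S_0 \subset S$, using the $S_2$ identity at each step to show that $j_*\cO^{\hbar}_{X^\circ_I}$ is flat over $\cO_S \hat{\otimes} \kk_{\hbar}$. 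The existence of the quantum theta functions enters only to verify the surjectivity hypothesis of that lemma (the map $j_*\cO^{\hbar}_{X^\circ_I} \to \cO^{\hbar}_{X_{J,\fD}}$ onto the central fiber). Your closing remark that the absence of ``unexpected global sections'' is controlled by consistency and Lemma \ref{lem_finite} misidentifies the mechanism: consistency guarantees the $\hat{\vartheta}_p$ exist as global sections, but it is the $S_2$/codimension-two property of the central fiber, not consistency, that forces them to span.
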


\begin{proof}
We follow the structure of the proof of \cite[Theorem 2.28]{MR3415066}.

We first prove the result for $I=J$.
As $r \geqslant 3$ and 
$\kappa_{\rho,\varphi}
\in J$ for all one-dimensional cones
$\rho$ of $\Sigma$, the only broken line 
contributing to 
$\Lift_Q(p)$, 
for every $Q$
in $B_0$ and 
$p \in B_0(\Z)$, is the straight line 
of endpoint $Q$ and direction $p$, and this provides a non-zero 
contribution only if $Q$ and $p$ lie in the same two-dimensional cone of $\Sigma$.
Combined with Lemma
\ref{lem_special_fiber}, this implies
that the map
\[\bigoplus_{p \in B(\Z)} R_J^{\hbar}
\, \hat{\vartheta}_p \rightarrow
A_J^{\hbar} \coloneqq
\Gamma (X_{J,\fD}^\circ, 
\cO_{X_{J,\fD}^\circ}^{\hbar})
=R_J^{\hbar}[\Sigma]
\]
is given by 
\[ \hat{\vartheta}_p \mapsto \hat{z}^p \,\]
and so is an isomorphism.

We now treat the case of a general ideal $I$ of $P$ of radical $J$.
By construction,
$\cO_{X_{I,\fD}^\circ}^{\hbar}$
is a deformation quantization of 
$X^\circ_{I,\fD}$ over $R_I$. In particular, $\cO_{X_{I,\fD}^\circ}^{\hbar}$
is a sheaf in flat $R_I^{\hbar}$-algebras.
As used in \cite{MR3415066}, the fibers of
$X_{J,\fD} \rightarrow \Spec R_J$ satisfy Serre's condition $S_2$ by 
\cite{MR1923963}.
We have $\cO_{X_{J,\fD}}^{\hbar}
\simeq \cO_{X_{J,\fD}} \hat{\otimes}
\kk_{\hbar}$
as $\kk_{\hbar}$-module and so it follows that 
$j_* j^{*} \cO_{X_{J,\fD}}^{\hbar}
=\cO_{X_{J,\fD}}^{\hbar}$.
The existence of quantum theta functions 
$\hat{\vartheta}_p$ guarantees that the natural map
\[ \cO_{X_{I,\fD}}^{\hbar}
\coloneqq j_*
\cO_{X_{I,\fD}}^{\hbar}
\rightarrow j_* j^{*} \cO_{X_{J,\fD}}^{\hbar} = \cO_{X_{J,\fD}}^{\hbar}\]
is surjective.
So the result follows from the following Lemma, analogous to
\cite[Lemma 2.29]{MR3415066}.

\begin{lem}
Let $X_0/S_0$ be a flat family 
of surfaces such whose fibers satisfy 
Serre's condition $S_2$. 
Let $j \colon X_0^\circ \subset X_0$
be the inclusion of an open subset
such that the complement has finite fiber. 
Let $S_0 \subset S$ be an infinitesimal thickening of $S_0$, and $X/S$ a flat deformation of $X_0/S_0$, inducing a flat deformation $X^\circ/S$ of $X_0^\circ/S_0$.
Let $\cO_{X_0}^{\hbar}$
be a deformation quantization of $X_0/S_0$
such that $\cO_{X_0}^{\hbar} \simeq
\cO_{X_0} \hat{\otimes} \kk_{\hbar}$
as $\cO_{S_0} \hat{\otimes} \kk_{\hbar}$-module, and so $j_* j^{*}\cO_{X_0}^{\hbar}=\cO_{X_0}^{\hbar}$ by the relative $S_2$ condition satisfied by $X_0/S_0$.
Let $\cO_{X^\circ}^{\hbar}$ be a deformation quantization of $X^\circ/S$, restricting 
to $j^{*}\cO_{X_0}^{\hbar}$ over $X_0^\circ$.
If the natural map
\[\cO_{X}^{\hbar} \coloneqq 
j_{*} \cO_{X^\circ}^{\hbar} \rightarrow
j_{*} j^{*} \cO_{X_0}^{\hbar}=\cO_{X_0}^{\hbar}\]
is surjective, then $\cO_{X}^{\hbar}$
is a deformation quantization of 
$X/S$. 
\end{lem}

\begin{proof}
We have to prove that $\cO_X^{\hbar}$ 
is flat over $\cO_S \hat{\otimes} \kk_{\hbar}$.

Let $\cI \subset \cO_S$ be the nilpotent ideal defining $S_0 \subset S$. 
Let $X_n$, $X^\circ_n$, $S_n$ be the $n$th-order infinitesimal thickening of $X_0$,
$X_0^\circ$, $S_0$ in $S$, that is,
$\cO_{X_n}=\cO_{X}/
\cI^{n+1}$,
$\cO_{X^\circ_n}=\cO_{X^\circ}/
\cI^{n+1}$ and $\cO_{S_n}=\cO_S/\cI^{n+1}$.

We define $\cO_{X_n}^{\hbar} \coloneqq 
j_{*} \cO_{X^\circ_n}^{\hbar}$. We show by
induction on $n$ that $\cO_{X_n}^{\hbar}$
is flat over $\cO_{S_n} \hat{\otimes} \kk_{\hbar}$.

For $n=0$, we have
$j_{*} \cO_{X^\circ_0}^{\hbar}=j_{*}j^{*} \cO_{X_0}^{\hbar}=
\cO_{X_0}^{\hbar}$, which is flat over $\cO_{S_0} \hat{\otimes} \kk_{\hbar}$ by assumption.

Assume that the induction hypothesis is true for $n-1$. Since 
$\cO_{X^\circ_n}^{\hbar}$ is flat over 
$\cO_{S_n} \hat{\otimes} \kk_{\hbar}$, 
we have an exact sequence
\[ 0 \rightarrow \cI^n/\cI^{n+1}
\otimes \cO_{X_0^\circ}^{\hbar}
\rightarrow 
\cO_{X_n^\circ}^{\hbar}
\rightarrow 
\cO_{X_{n-1}^\circ}^{\hbar}
\rightarrow 0 \,.\]
Applying $j_*$, we get an exact sequence
\[ 0 \rightarrow j_*(\cI^n/\cI^{n+1}
\otimes j^* \cO_{X_0}^{\hbar})
\rightarrow  
\cO_{X_n}^{\hbar}
\rightarrow 
\cO_{X_{n-1}}^{\hbar} \,.\]
We have $j_*(\cI^n/\cI^{n+1}
\otimes j^* \cO_{X_0}^{\hbar})
=\cI^n/\cI^{n+1}
\otimes \cO_{X_0}^{\hbar}$.

By assumption, the natural map 
$\cO_X^{\hbar} \rightarrow j_* j^* \cO_{X_0}^{\hbar}=\cO_{X_0}^{\hbar}$ is surjective. By the induction hypothesis, we have $\cO_{X_{n-1}}^{\hbar}/\cI
=\cO_{X_0}^{\hbar}$. As $\cI$ is nilpotent, it follows that the map 
$\cO_{X_n}^{\hbar} \rightarrow \cO_{X_{n-1}}^{\hbar}$ is surjective.
So we have an exact sequence
\[ 0 \rightarrow \cI^n/\cI^{n+1} \otimes\cO_{X_0}^{\hbar}
\rightarrow  
\cO_{X_n}^{\hbar}
\rightarrow 
\cO_{X_{n-1}}^{\hbar} 
\rightarrow 0\,,\]
implying that $\cO_{X_n}^{\hbar}$
is flat over $\cO_{S_n} \hat{\otimes} \kk_{\hbar}$.
\end{proof}

\end{proof}

\subsection{The algebra structure}
\label{section_algebra_structure}
This section is a 
$q$-deformed version of \cite[\S 2.4]{MR3415066}.

We saw in the previous section that the 
$R_I^{\hbar}$-algebra 
\[ A_I^{\hbar} \coloneqq \Gamma(X_{I,\fD}^\circ, \cO_{X_{I,\fD}^\circ}^{\hbar})\] 
is free 
as $R_I^{\hbar}$-module, admitting a basis of quantum theta functions 
$\hat{\vartheta}_p$, $p \in B(\Z)$.
Theorem \ref{thm_product} below gives a combinatorial expression for the structure constants of the algebra $A_I^{\hbar}$ in the basis of quantum 
theta functions.

If $\gamma$ is a quantum broken line of endpoint 
$Q$ in a cone $\tau$ of $ \Sigma$,
we can write the monomial 
$\Mono(\gamma)$
attached to the segment ending at $Q$
as 
\[ \Mono(\gamma) = c(\gamma) \hat{z}^{\varphi_{\tau} (s(\gamma))} \]
with $c(\gamma)
\in \kk_{\hbar}[P_{\varphi_\tau}]$
and $s(\gamma) 
\in \Lambda_{\tau}$.

\begin{thm} 
\label{thm_product}
Let $p \in B(\Z)$ and let $z \in B 
-\Supp_I(\hat{\fD}^\can)$ be  very close to $p$.
For every $p_1$,
$p_2 \in B(\Z)$, the structure constants $C_{p_1, p_2}^p
\in R_I^{\hbar}$ in the 
product expansion
\[ \hat{\vartheta}_{p_1}
\hat{\vartheta}_{p_2}
= \sum_{p \in B(\Z)} C_{p_1, p_2}^p \hat{\vartheta}_p\]
are given by
\[ C_{p_1,p_2}^p
= \sum_{\gamma_1, \gamma_2}
c(\gamma_1) c(\gamma_2) 
q^{\frac{1}{2} \langle
s(\gamma_1), s(\gamma_2) \rangle} \,,\]
where the sum is over all broken lines $\gamma_1$ and $\gamma_2$ of asymptotic charges $p_1$ and $p_2$,
satisfying $s(\gamma_1)+s(\gamma_2)=p$, and both ending at the point $z \in B_0$. 
\end{thm}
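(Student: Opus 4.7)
The plan is to follow the structure of the proof of Theorem 2.34 in \cite{MR3415066}, keeping careful track of the $q$-factors coming from the noncommutative multiplication $\hat{z}^m\hat{z}^{m'}=q^{\frac{1}{2}\langle m,m'\rangle}\hat{z}^{m+m'}$. By Theorem \ref{thm_construction_ch3}, the quantum theta functions $\{\hat{\vartheta}_p\}_{p\in B(\Z)}$ form an $R_I^{\hbar}$-basis of $A_I^{\hbar}$, so the coefficients $C_{p_1,p_2}^p\in R_I^{\hbar}$ are well-defined by the expansion $\hat{\vartheta}_{p_1}\hat{\vartheta}_{p_2}=\sum_{p}C_{p_1,p_2}^p\hat{\vartheta}_p$.

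The first step is to evaluate both sides of this identity at the point $z$, in the ring $R^{\hbar}_{\tau,I}=\kk_{\hbar}[P_{\varphi_\tau}]/I$, where $\tau$ denotes the two-dimensional cone of $\Sigma$ containing $z$. By definition of the quantum theta functions via the gluing construction of Section~\ref{subsection_gluing_ch3} and consistency of $\hat{\fD}^{\can}$, we have $\hat{\vartheta}_{p}|_z=\Lift_z(p)$. Multiplying the two sums over broken lines and using that $c(\gamma_i)\in\kk_{\hbar}[P]$ lies in the center (since the skew-form vanishes on $\underline{P}^{\gp}$), we obtain
\begin{equation*}
\Lift_z(p_1)\,\Lift_z(p_2)=\sum_{\gamma_1,\gamma_2} c(\gamma_1)c(\gamma_2)\,q^{\frac{1}{2}\langle s(\gamma_1),s(\gamma_2)\rangle}\,\hat{z}^{\varphi_\tau(s(\gamma_1)+s(\gamma_2))},
\end{equation*}
where the sum runs over pairs of quantum broken lines of asymptotic charges $p_1,p_2$ with common endpoint $z$.

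The second step is to extract the coefficient of $\hat{z}^{\varphi_\tau(p)}$ on both sides of $\Lift_z(p_1)\Lift_z(p_2)=\sum_{p'}C_{p_1,p_2}^{p'}\Lift_z(p')$. The crucial geometric input, which is where the hypothesis that $z$ is very close to $p$ enters, is the identity
\begin{equation*}
[\hat{z}^{\varphi_\tau(p)}]\,\Lift_z(p')=\delta_{p,p'}.
\end{equation*}
Indeed, any broken line contributing a monomial proportional to $\hat{z}^{\varphi_\tau(p)}$ must have its final straight segment pointing into $z$ from direction $p$; once $z$ is chosen close enough to $p$ (in particular, in the cone $\tau$ containing $p$ and inside the connected component of $B-\Supp_I(\hat{\fD}^{\can})$ adjacent to $p$), the only such broken line is the straight line of asymptotic charge $p$ itself, which contributes the monomial $\hat{z}^{\varphi_\tau(p)}$ with coefficient $1$. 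No broken line of asymptotic charge $p'\neq p$ can produce a final monomial $\hat{z}^{\varphi_\tau(p)}$ at such a $z$.

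The main obstacle is precisely this last geometric statement, but it is formally identical to the classical (genus zero) argument in \cite{MR3415066}, since the $q$-deformation affects only the coefficients attached to broken lines and the algebra structure, not the underlying combinatorics of broken lines and their supports. Combining the two coefficient extractions yields
\begin{equation*}
C_{p_1,p_2}^p=\sum_{\gamma_1,\gamma_2}c(\gamma_1)c(\gamma_2)\,q^{\frac{1}{2}\langle s(\gamma_1),s(\gamma_2)\rangle},
\end{equation*}
where the sum is over pairs of broken lines with common endpoint $z$, asymptotic charges $p_1,p_2$, and $s(\gamma_1)+s(\gamma_2)=p$, as claimed.
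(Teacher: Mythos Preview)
Your proof is correct and follows essentially the same approach as the paper's: restrict the identity $\hat{\vartheta}_{p_1}\hat{\vartheta}_{p_2}=\sum_{p'}C_{p_1,p_2}^{p'}\hat{\vartheta}_{p'}$ to the local ring at $z$, expand both lifts as sums over broken lines, and use the multiplication rule $\hat{z}^{\varphi_\tau(s(\gamma_1))}\hat{z}^{\varphi_\tau(s(\gamma_2))}=q^{\frac{1}{2}\langle s(\gamma_1),s(\gamma_2)\rangle}\hat{z}^{\varphi_\tau(s(\gamma_1)+s(\gamma_2))}$. You in fact make one step more explicit than the paper does: the paper only records $\Lift_z(p)=\hat{z}^{\varphi_\tau(p)}$ and then asserts that ``the result follows'', whereas you state and justify the full coefficient-extraction identity $[\hat{z}^{\varphi_\tau(p)}]\Lift_z(p')=\delta_{p,p'}$, which is what is actually needed to read off $C_{p_1,p_2}^p$ from the right-hand side.
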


\begin{proof}
Let $\tau$ be the smallest cone of $\Sigma$ containing 
$p$. Working in the algebra
$\kk_{\hbar}[P_{\varphi_\tau}]/I$, we have 
\[\Lift_z(p_1) \Lift_z(p_2) =
\sum_{p \in B(\Z)} C^p_{p_1,p_2} \Lift_z(p) \,.\]
By definition, we have 
\[ \Lift_z(p_1)
= \sum_{\gamma_1} c(\gamma_1) 
\hat{z}^{\varphi_\tau(s(\gamma_1))}\,,\]
and 
\[ \Lift_z(p_2)
= \sum_{\gamma_2} c(\gamma_2) 
\hat{z}^{\varphi_\tau(s(\gamma_2))}\,.\]
As $p$ and $z$ belong to the cone $\tau$, the only quantum broken line of charge $p$ ending at $z$ is the straight line $z+\R_{\geqslant 0}$
equipped with the monomial 
$\hat{z}^{\varphi_\tau(p)}$, and so 
we have 
\[\Lift_z(p) = \hat{z}^{\varphi_\tau(p)} \,.\]
The result then follows from the multiplication rule
\[\hat{z}^{\varphi_\tau(s(\gamma_1))}\hat{z}^{\varphi_\tau(s(\gamma_2))}
=q^{\frac{1}{2}
\langle s(\gamma_1), s(\gamma_2)\rangle} \hat{z}^{\varphi_\tau(p)}\,.\]
\end{proof}

In the formula given by the previous theorem, the non-commutativity of the product of the quantum theta functions comes from the twist by the power of $q$,
\[ q^{\frac{1}{2} \langle
s(\gamma_1), s(\gamma_2) \rangle} \,, \]
which is obviously not symmetric in 
$\gamma_1$ and $\gamma_2$ as 
$\langle -,- \rangle$ is skew-symmetric.

Taking the classical limit 
$\hbar \rightarrow 0$, we get an explicit 
formula for the Poisson bracket of classical theta functions, which could have been written and proved in \cite{MR3415066}.

\begin{cor} \label{cor_poisson}
Let $p \in B(\Z)$ and let 
$z \in B- \Supp_I(\fD^\can)$ be very close to $p$.
For every 
$p_1, p_2 \in B(\Z)$, the Poisson bracket of the classical theta functions 
$\vartheta_{p_1}$ and 
$\vartheta_{p_2}$ is
given by 
\[ \{ \vartheta_{p_1}, \vartheta_{p_2} \} 
= \sum_{p \in B(\Z)} P^p_{p_1,p_2}
\vartheta_p \,,\]
where 
\[P^p_{p_1,p_2} \coloneqq \sum_{\gamma_1, \gamma_2} \langle s(\gamma_1), s(\gamma_2)
\rangle c(\gamma_1) c(\gamma_2) \,,\]
where the sum is over all broken lines 
$\gamma_1$ and $\gamma_2$ of asymptotic charges $p_1$ and $p_2$, satisfying
$s(\gamma_1)+s(\gamma_2)=p$, and both ending at the point $z \in B_0$. 
\end{cor}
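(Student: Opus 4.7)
My plan is to derive the Poisson bracket formula as the semiclassical limit of the quantum product formula in Theorem \ref{thm_product}. Since the quantum algebra $A_I^{\hbar}$ is a deformation quantization of the classical algebra $A_I$ (by Theorem \ref{thm_construction_ch3}), the Poisson bracket of two classical theta functions $\vartheta_{p_1}$, $\vartheta_{p_2}$ can be recovered as
\[ \{\vartheta_{p_1}, \vartheta_{p_2}\} = \lim_{\hbar \to 0} \frac{1}{i\hbar}\bigl[\hat{\vartheta}_{p_1}, \hat{\vartheta}_{p_2}\bigr] \,,\]
so the task is simply to extract the coefficient of $\hbar$ in the commutator via Theorem \ref{thm_product}.

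Applying Theorem \ref{thm_product} to both orderings, I get
\[ \bigl[\hat{\vartheta}_{p_1}, \hat{\vartheta}_{p_2}\bigr] = \sum_{p \in B(\Z)} \bigl(C_{p_1,p_2}^p - C_{p_2,p_1}^p\bigr)\hat{\vartheta}_p \,.\]
The only difference between $C_{p_1,p_2}^p$ and $C_{p_2,p_1}^p$ is the twist factor: summing over the same pairs $(\gamma_1,\gamma_2)$ (with asymptotic charges $p_1,p_2$ respectively and $s(\gamma_1)+s(\gamma_2)=p$), and using the skew-symmetry $\langle s(\gamma_2),s(\gamma_1)\rangle = -\langle s(\gamma_1),s(\gamma_2)\rangle$, I obtain
\[ C_{p_1,p_2}^p - C_{p_2,p_1}^p = \sum_{\gamma_1,\gamma_2} c(\gamma_1) c(\gamma_2) \bigl(q^{\frac{1}{2}\langle s(\gamma_1),s(\gamma_2)\rangle} - q^{-\frac{1}{2}\langle s(\gamma_1),s(\gamma_2)\rangle}\bigr) \,.\]
Here I use that the $c(\gamma_i)$ lie in the center of the relevant algebra (they sit in $\kk_{\hbar}[P]$, whose generators are central because the skew-symmetric form on $\cP$ has kernel $\underline{P}^\gp$), so reordering them causes no additional $q$-twist.

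Substituting $q = e^{i\hbar}$ and expanding gives
\[ q^{\frac{1}{2}\langle s(\gamma_1),s(\gamma_2)\rangle} - q^{-\frac{1}{2}\langle s(\gamma_1),s(\gamma_2)\rangle} = 2i\sin\!\bigl(\tfrac{\hbar}{2}\langle s(\gamma_1),s(\gamma_2)\rangle\bigr) = i\hbar\,\langle s(\gamma_1),s(\gamma_2)\rangle + \cO(\hbar^3) \,.\]
Dividing by $i\hbar$ and taking $\hbar \to 0$ (which reduces each $c(\gamma)$ to its classical value and each quantum broken line to a classical broken line in the sense of \cite{MR3415066}), I arrive at
\[ \{\vartheta_{p_1},\vartheta_{p_2}\} = \sum_{p \in B(\Z)} \Biggl(\sum_{\gamma_1,\gamma_2} \langle s(\gamma_1),s(\gamma_2)\rangle \, c(\gamma_1) c(\gamma_2)\Biggr) \vartheta_p \,,\]
which is the claimed formula. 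There is no serious obstacle here: the argument is a mechanical extraction of the linear $\hbar$-term from Theorem \ref{thm_product}, and the only point that deserves a sentence of care is the centrality of the $c(\gamma_i)$, which ensures the $q^{\pm \frac{1}{2}\langle\cdot,\cdot\rangle}$ factors are the sole source of asymmetry in the structure constants.
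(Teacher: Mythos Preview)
Your proof is correct and follows exactly the approach the paper indicates: the corollary is stated immediately after Theorem \ref{thm_product} with the remark that it is obtained by ``taking the classical limit $\hbar \rightarrow 0$'', and the paper gives no further argument. Your expansion of the commutator via the structure constants, together with the observation that the $c(\gamma_i)$ are central (since they lie in the image of $\kk_{\hbar}[P]$, on which the skew form vanishes), is precisely the computation that justifies this one-line claim.
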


\section{The canonical quantum scattering diagram}
\label{section_canonical}
In this section we construct a quantum deformation of the 
canonical scattering diagram constructed in 
\cite[\S 3]{MR3415066} and we prove its consistency. 
In \cref{section_log_gw_ch3} we 
give the definition of a family of higher-genus log Gromov--Witten invariants of a Looijenga pair. In 
\cref{section_defn_ch3}
 we use these invariants to construct the quantum
canonical scattering diagram
of a Looijenga pair and we state its consistency in 
Theorem \ref{thm_consistency}.
The proof of 
Theorem \ref{thm_consistency} occupies  
\cref{subsection_GS_locus}--
\cref{section_end_proof_ch3}, 
and follows the general structure of the proof given in the classical case by 
\cite{MR3415066}, the use of 
\cite{MR2667135}
being replaced 
by the use of \cite{bousseau2018quantum_tropical}.

\subsection{Log Gromov--Witten invariants}
\label{section_log_gw_ch3}

We fix a Looijenga pair $(Y,D)$, its tropicalization $(B, \Sigma)$, 
a toric monoid $P$ and a morphism $\eta \colon NE(Y)
\rightarrow P$ of monoids.
Let $\varphi$ be the unique (up to addition of a linear function) 
$P_\R^{\mathrm{gp}}$-valued multivalued convex $\Sigma$-piecewise 
linear function on $B$
such that $\kappa_{\rho, \varphi}
=\eta([D_\rho])$ for every $\rho$
one-dimensional cone of $\Sigma$, where 
$[D_\rho] \in NE(Y)$ is the class of the divisor $D_\rho$ dual to $\rho$.

Let $\fd \subset B$ be a ray with endpoint the origin and with rational slope. 
Let $\tau_{\fd} \in \Sigma$ be the smallest cone containing 
$\fd$ and let $m_{\fd} \in \Lambda_{\tau_\fd}$ be the primitive generator of $\fd$
pointing away from the origin.

Let us first assume that $\tau=\sigma$ is a two-dimensional cone of $\Sigma$.
The ray $\fd$ is then contained in the interior of $\sigma$. Let 
$\rho_R$ and $\rho_L$ be the two rays of 
$\Sigma$ bounding $\sigma$.
Let $m_{\rho_R}, m_{\rho_L} \in \Lambda_\sigma$ be primitive generators of
$\rho_R$, $\rho_L$ pointing away from the origin. 
As $\sigma$ is isomorphic as integral affine manifold to the standard positive quadrant
$(\R_{\geqslant 0})^2$ of $\R^2$, there exists a unique decomposition
\[ m_\fd = n_R m_{\rho_R}+n_L m_{\rho_{L}}\]
with $n_R$ and $n_L$ positive integers.
Let $NE(Y)_\fd$ be the set of classes
$\beta \in NE(Y)$ such that there exists a positive integer $\ell_\beta$
such that 
\begin{align*}
\beta \cdot D_{\rho_R} &=\ell_\beta n_R \,,\\
\beta \cdot D_{\rho_L}&=\ell_\beta n_L \,,\\
\beta \cdot D_\rho&=0 \,,
\end{align*}
for every one-dimensional cone $\rho$ of $\Sigma$ distinct of 
$\rho_R$ and $\rho_L$. 

If $\tau = \rho$ is a 
one-dimensional cone of $\Sigma$,
we define $NE(Y)_\fd$ as being the set of classes 
$\beta \in NE(Y)$ such that there exists a positive integer 
$\ell_\beta$ such that 
\[\beta \cdot D_{\rho}=\ell_\beta \,,\]
and 
\[\beta \cdot D_{\rho'}=0 \,,\]
for every one-dimensional cone $\rho'$ of $\Sigma$ distinct 
from $\rho$.

The upshot of the preceding discussion is that, for any ray $\fd$ with endpoint the origin and of rational slope, we have defined a subset $NE(Y)_\fd$ of $NE(Y)$.

We equip $Y$ with the divisorial 
log structure defined by the normal crossing divisor $D$. The resulting log scheme is log smooth. As reviewed in 
\cref{section_tropicalization}, integral points $p \in B(\Z)$ of the tropicalization
naturally define tangency conditions for stable log maps to $Y$.

For every $\beta \in NE(Y)_\fd$, let 
$\overline{M}_{g}(Y/D,\beta)$ be the moduli space of genus -$g$ stable log maps to $(Y,D)$,
of class $\beta$, and satisfying 
the tangency condition $\ell_\beta m_\fd
\in B(\Z)$.
By the work of Gross and Siebert 
\cite{MR3011419}
and of Abramovich and Chen
\cite{MR3224717, MR3257836},
$\overline{M}_{g}(Y/D,\beta)$  
is a proper Deligne--Mumford stack
of virtual dimension $g$
and admits a virtual fundamental class
\[ [\overline{M}_{g} (Y/D,\beta)]^{\virt}
\in A_g(\overline{M}_{g} (Y/D,
\beta), \Q)\,.\]
If $\pi \colon \cC \rightarrow 
\overline{M}_{g} (Y/D, \beta)$ 
is the universal curve,
of relative dualizing sheaf 
$\omega_\pi$, then the Hodge bundle
\[ \E \coloneqq \pi_{*}\omega_\pi \]
is a rank-$g$ vector bundle over 
$\overline{M}_{g} (Y/D, \beta)$. 
Its Chern classes 
are classically 
\cite{MR717614} called the lambda classes,
\[\lambda_j \coloneqq c_j(\E) \,,\]
for $j=0, \dots, g$.
We define genus-$g$ 
log Gromov--Witten invariants of $(Y,D)$ by  
\[N_{g,\beta}^{Y/D} \coloneqq
\int_{[\overline{M}_{g} (Y/D, \beta)]^{\virt}} 
(-1)^g \lambda_g  \in \Q \,.\]

\subsection{Definition}
\label{section_defn_ch3}

Using the higher-genus log Gromov--Witten invariants defined in the previous section, we can define a natural deformation of the 
canonical scattering diagram defined in  \cite[\S 3.1]{MR3415066}.

\begin{defn} \label{def_canonical_scattering}
We define $\hat{\fD}^{\can}$ as being the set of pairs $(\fd, \hat{H}_\fd)$, where $\fd$ is a ray of rational slope in $B$ with endpoint the origin, and, denoting $\tau_\fd$ the smallest cone of $\Sigma$ containing $\fd$, and 
$m_\fd \in \Lambda_{\tau_\fd}$ the 
primitive generator of $\fd$ 
pointing away from the origin, $\hat{H}_\fd$ is given by 
\[ \hat{H}_{\fd} \coloneqq 
\left(\frac{i}{\hbar} \right) \sum_{\beta \in NE(Y)_{\fd}} 
\left( \sum_{g \geqslant 0} N_{g,\beta}^{Y/D} \hbar^{2g} \right) 
\hat{z}^{\eta(\beta)- \varphi_{\tau_\fd}(\ell_\beta m_\fd)} \in \kk_{\hbar}\widehat{[P_{\varphi_{\tau_\fd}}]}\,.\]
\end{defn}

The following lemma is formally almost identical to 
\cite[Lemma 3.5]{MR3415066}.

\begin{lem}\label{lem_assumptions_ch3}
Let $J$ be a radical ideal of $P$. Suppose that the map 
$\eta \colon NE(Y) \rightarrow P$
satisfies the following conditions
\begin{itemize}
\item If $\fd$ is contained in the interior
of a two-dimensional cone of $\Sigma$, then 
$\eta(\beta) \in J$ for every $\beta \in NE(Y)_\fd$ such that $N_{g,\beta} \neq 0$ for some $g$.
\item If $\fd$ is a ray $\rho$ of $\Sigma$ 
and 
$\kappa_{\rho,\varphi} \notin J$,
then $\eta(\beta) \in J$ for every $\beta \in NE(Y)_\fd$ such that $N_{g,\beta} \neq 0$ for some $g$.
\item For any ideal $I$ in $P$ of radical $J$, there are only finitely may classes 
$\beta \in NE(Y)$ such that 
$N_{g,\beta} \neq 0$ for some $g$ and such that
$\eta (\beta) \notin I$.
\end{itemize}
Then $\hat{\fD}^{\can}$ is a quantum scattering diagram for the data $(B,\Sigma)$, 
$P$, $J$ and $\varphi$. Furthermore, the quantum scattering diagram 
$\hat{\fD}^{\can}$ has only outgoing
rays.
\end{lem}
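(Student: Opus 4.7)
The plan is to verify one by one the conditions in Definition \ref{def_quantum_scattering}, together with the outgoing assertion. The argument is a genus-by-genus parallel of the proof of Lemma 3.5 of \cite{MR3415066}: only the scalar coefficients of monomials are deformed, from $N_{0,\beta}^{Y/D}$ to the full generating series $\sum_{g \geqslant 0} N_{g,\beta}^{Y/D} \hbar^{2g}$, while the combinatorics of monomial exponents, ray directions, and $J$-adic valuations is unchanged.

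First I would pin down the monomial exponents. For each $\beta \in NE(Y)_\fd$ with nontrivial contribution, set
\[ p_\beta \coloneqq \eta(\beta) - \varphi_{\tau_\fd}(\ell_\beta m_\fd) \in \cP^{\mathrm{gp}}_{\tau_\fd}\,. \]
The first task is to verify that $p_\beta \in P_{\varphi_{\tau_\fd}}$, so that $\hat{z}^{p_\beta}$ is a legitimate element of $\kk_{\hbar}\widehat{[P_{\varphi_{\tau_\fd}}]}$. By the definition of $P_{\varphi_{\tau_\fd}}$ this reduces to showing that the discrepancy between $\eta(\beta)$ and the appropriate piecewise linear correction lies in $P$. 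Using that the kinks of $\varphi$ are $\kappa_{\rho, \varphi} = \eta([D_\rho])$ together with the intersection-number conditions built into the definition of $NE(Y)_\fd$, convexity of $\varphi$ gives the required membership. I expect this step, the exact analogue of the classical statement in \cite{MR3415066}, to be the main technical input; however, it is unchanged by the higher-genus deformation, since only the monomial exponent, not its coefficient, is involved.

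Next I would check that every ray is outgoing. Because $\mathfrak{r} \colon \cP \to \Lambda$ vanishes on $\underline{P}^{\mathrm{gp}}$ and sends $\varphi_{\tau_\fd}(m)$ to $m$ (as $\varphi$ is a section of $\pi$), one computes
\[ \mathfrak{r}(p_\beta) = -\ell_\beta m_\fd \in \Z_{<0}\, m_\fd\,, \]
using $\ell_\beta > 0$. This simultaneously places $\hat{H}_\fd$ in the outgoing case of Definition \ref{def_quantum_scattering} and proves the final clause of the lemma.

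Finally, the vanishing and finiteness bullets follow directly from the hypotheses on $\eta$. Under the first two hypotheses, whenever $\dim \tau_\fd = 2$, or $\dim \tau_\fd = 1$ with $\kappa_{\tau_\fd, \varphi} \notin J$, every contributing $\beta$ satisfies $\eta(\beta) \in J$; hence each monomial appearing in $\hat{H}_\fd$ is divisible by $\hat{z}^{\eta(\beta)} \in J \cdot \kk_{\hbar}\widehat{[P_{\varphi_{\tau_\fd}}]}$, so $\hat{H}_\fd \equiv 0 \pmod{J}$. Under the third hypothesis, for any ideal $I$ of radical $J$ only finitely many classes $\beta \in NE(Y)$ with $N_{g,\beta}^{Y/D} \neq 0$ for some $g$ satisfy $\eta(\beta) \notin I$, and each such $\beta$ picks out a single ray $\fd$ (the one generated by its tangency data), so only finitely many rays survive modulo $I$.
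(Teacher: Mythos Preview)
Your proof is correct and follows essentially the same approach as the paper's own proof, which is extremely terse: the paper simply says the hypotheses guarantee the finiteness requirements in Definition \ref{def_quantum_scattering}, and then computes $\mathfrak{r}(\eta(\beta)-\varphi_{\tau_\fd}(\ell_\beta m_\fd)) = -\ell_\beta m_\fd \in \Z_{<0} m_\fd$ to get the outgoing claim. Your writeup expands this into the individual checks, which is fine.

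One small correction: your justification of the membership $p_\beta \in P_{\varphi_{\tau_\fd}}$ via ``convexity of $\varphi$'' is not quite the right reason. The point is simpler: in both cases ($\tau_\fd$ two-dimensional, or $\tau_\fd = \rho$ one-dimensional with $m_\fd = m_\rho$) the function $\varphi_{\tau_\fd}$ is \emph{linear} along the line spanned by $m_\fd$, so $-\varphi_{\tau_\fd}(\ell_\beta m_\fd) = \varphi_{\tau_\fd}(-\ell_\beta m_\fd)$ and hence $p_\beta = \eta(\beta) + \varphi_{\tau_\fd}(-\ell_\beta m_\fd)$ with $\eta(\beta) \in P$, giving membership directly. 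This same decomposition also makes the $J$-divisibility step immediate. Convexity is not needed here.
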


\begin{proof}
The assumptions guarantee the finiteness requirements in the definition of a quantum scattering diagram: see  
\cref{section_quantum_scattering_diagrams}.
The ray $(\fd, \hat{H}_{\fd})$
is outgoing because 
$\mathfrak{r}(\eta(\beta)-\varphi_{\tau_\fd}(\ell_\beta 
m_{\fd}))
= -\ell_\beta m_\fd \in \Z_{<0} m_\fd$.
\end{proof}

\begin{lem}
The classical limit of the canonical quantum  
scattering diagram $\hat{\fD}^\can$ is the canonical scattering diagram defined in \cite[\S 3.1]{MR3415066}.
\end{lem}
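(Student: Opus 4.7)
The plan is to expand the quantum Hamiltonian $\hat{H}_\fd$ and the associated multiplicative function $\hat{f}_\fd$ as power series in $\hbar$, and verify that $\lim_{\hbar \to 0} \hat{f}_\fd$ reproduces the scattering function attached to $\fd$ in the canonical scattering diagram of \cite{MR3415066}. Since the data of a scattering diagram is, at the level of ray-attached functions, encoded by $\hat{f}_\fd$ (as recalled in Section \ref{section_aut_ch3}), and since the $\hbar \to 0$ (equivalently $q \to 1$) limit sends $\kk_\hbar\widehat{[P_{\varphi_{\tau_\fd}}]}$ to the commutative algebra $\kk\widehat{[P_{\varphi_{\tau_\fd}}]}$ underlying the classical construction, this reduction of the problem is automatic.

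The core step is a direct power series computation. For each $\beta \in NE(Y)_\fd$, write $p_\beta \coloneqq \eta(\beta) - \varphi_{\tau_\fd}(\ell_\beta m_\fd)$, so that $\mathfrak{r}(p_\beta) = -\ell_\beta m_\fd = -\ell_\beta m(\hat{H}_\fd)$. Substituting the definition of $\hat{H}_\fd$ into the formula for $\hat{f}_\fd$ from Section \ref{section_aut_ch3} gives
\[ \hat{f}_\fd = \exp\!\left( \sum_{\beta \in NE(Y)_\fd} (q^{-\ell_\beta} - 1)\, H_{p_\beta}\, \hat{z}^{p_\beta} \right), \qquad H_{p_\beta} = \frac{i}{\hbar}\sum_{g \geq 0} N_{g,\beta}^{Y/D}\, \hbar^{2g}. \]
Expanding $q^{-\ell_\beta} - 1 = e^{-i\ell_\beta \hbar} - 1 = -i\ell_\beta \hbar + O(\hbar^2)$, the singular prefactor $i/\hbar$ in $H_{p_\beta}$ is cancelled exactly by the leading order of $q^{-\ell_\beta} - 1$, and I obtain
\[ (q^{-\ell_\beta} - 1)\, H_{p_\beta} = \ell_\beta\, N_{0,\beta}^{Y/D} + O(\hbar^2). \]
The contributions of all higher genus invariants $N_{g,\beta}^{Y/D}$ with $g \geq 1$ come multiplied by $\hbar^{2g}$, which after multiplication by $\hbar \cdot \hbar^{-1}$ still leaves an $O(\hbar^2)$ factor; these therefore vanish in the limit. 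Passing to $\hbar \to 0$ thus yields
\[ \lim_{\hbar \to 0} \hat{f}_\fd \;=\; \exp\!\left( \sum_{\beta \in NE(Y)_\fd} \ell_\beta\, N_{0,\beta}^{Y/D}\, z^{\eta(\beta) - \varphi_{\tau_\fd}(\ell_\beta m_\fd)} \right). \]

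To conclude, I compare this expression with the definition of the canonical classical scattering diagram in Section 3.1 of \cite{MR3415066}, where the function attached to $\fd$ has precisely this exponential form in terms of genus zero log Gromov-Witten invariants (using the identification of \cite{MR3415066}'s ad hoc invariants with log Gromov-Witten invariants noted in the introductory footnote and justified by the end of Section 4 of \cite{bousseau2017tropical}). The proof is then complete. The only subtlety, rather than a serious obstacle, is the careful bookkeeping of signs and of the factor $\ell_\beta$ in the exponent: the conventions of Section \ref{section_defn_ch3} (placement of $i/\hbar$, signs in $\varphi_{\tau_\fd}(\ell_\beta m_\fd)$, outgoing orientation $m(\hat{H}_\fd) = m_\fd$) are calibrated so that these match on the nose, as the computation above verifies.
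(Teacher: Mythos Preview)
Your argument is essentially correct and follows the same route as the paper: reduce to genus zero by taking $\hbar\to 0$, then identify the resulting genus-zero log Gromov--Witten invariants with the ad hoc relative invariants used in \cite{MR3415066}. The paper's proof treats the first step as immediate and focuses on the second, citing Lemma~15 of \cite{bousseau2018quantum_tropical} together with log birational invariance \cite{abramovich2013invariance}, rather than the footnote reference to \cite{bousseau2017tropical} that you use; these point to the same identification.

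One minor correction: your remainder estimate should be $O(\hbar)$, not $O(\hbar^2)$, since the quadratic term in the expansion of $q^{-\ell_\beta}-1$ contributes $-\tfrac{i\ell_\beta^2}{2}N_{0,\beta}^{Y/D}\,\hbar$. This does not affect the $\hbar\to 0$ limit, so the conclusion stands.
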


\begin{proof}
It follows from an analogue of the cycle arguments detailed in
\cite[Proposition 11]{MR3904449} and 
\cite[Lemma 15]{bousseau2018quantum_tropical},
and from the log birational invariance of 
logarithmic Gromov--Witten invariants
\cite{MR3778185},
that the relative genus-$0$ Gromov--Witten invariants of non-compact surfaces used in \cite{MR3415066}
coincide with the genus-$0$ log Gromov--Witten invariants $N_{0,\beta}^{Y/D}$.
More precisely, given a stable log map
$f \colon C \rightarrow Y$ defining a point in $\overline{M}_0(Y/D,\beta)$, we claim that no component of $C$ is mapped inside $D$. Indeed, if it were not the case, one could argue at the tropical level:
knowing the asymptotic behavior of the tropical map to the tropicalization $B$ of $Y$ imposed by the tangency condition $\ell_\beta m_\fd$, and repeatedly using the tropical balancing condition \cite[Proposition 1.15]{MR3011419}, we  would get that $C$ needs to contain a cycle of components mapping surjectively to $D$, contradicting the genus-$0$ assumption.

By Lemma \ref{lem_autom}, the quantum automorphism $\hat{\Phi}_{\hat{H}_\fd}$
coincides in the classical limit 
$\hbar \rightarrow 0$, $q=e^{i \hbar} \rightarrow 1$
with the automorphism
\[ z^p \mapsto z^p f_\fd^{\langle m(\hat{H}_\fd),\mathfrak{r}(p)\rangle} \]
of \cite{MR3415066}, where, using 
$\mathfrak{r}(\eta(\beta)- \varphi_{\tau_\fd}(\ell_\beta m_\fd))=-\ell_{\beta} m_\fd$, we have
\[ f_\fd = \lim_{\hbar \rightarrow 0}
\hat{f}_\fd=\lim_{\hbar \rightarrow 0}
\exp \left( \left(\frac{i}{\hbar} \right) \sum_{\beta \in NE(Y)_{\fd}} (e^{-i\ell_\beta \hbar}-1) 
\left( \sum_{g \geqslant 0} N_{g,\beta}^{Y/D} \hbar^{2g} \right) 
\hat{z}^{\eta(\beta)- \varphi_{\tau_\fd}(\ell_\beta m_\fd)} \right)\]
that is,
\[f_\fd=\exp \left(\sum_{\beta \in NE(Y)_{\fd}} \ell_\beta N_{0,\beta}^{Y/D} 
\hat{z}^{\eta(\beta)- \varphi_{\tau_\fd}(\ell_\beta m_\fd)} \right)\,, \]
which coincides with 
\cite[Definition 3.3]{MR3415066}.
\end{proof}

\subsection{Consistency}
The following result states that the quantum scattering diagram 
$\hat{\fD}^{\can}$, defined in  
\cref{section_defn_ch3}, is consistent in the sense of
\cref{section_broken_theta}.

\begin{thm}
\label{thm_consistency}
Suppose that 
\begin{itemize}
\item for any class $\beta \in NE(Y)$ such that $N_{g,\beta} \neq 0$ for some $g$, we have $\eta(\beta) \in J$;
\item for any ideal $I$ of $P$
of radical $J$, there are only finitely many classes $\beta \in NE(Y)$ such that $N_{g,\beta} \neq 0$ for some $g$ and 
$\eta(\beta) \notin I$;
\item $\eta([D_\rho]) \in J$
for at least one boundary component 
$D_\rho \subset D$. 
\end{itemize}
Then the canonical quantum scattering 
diagram 
$\hat{\fD}^\can$ is 
consistent.
\end{thm}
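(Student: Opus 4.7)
The proof follows the reduction strategy used by Gross-Hacking-Keel in the classical setting of \cite{MR3415066}, with the central input from \cite{MR2667135} replaced by its quantum refinement, the main theorem of \cite{bousseau2018quantum_tropical}. Consistency is to be checked for every ideal $I$ of radical $J$ and every asymptotic charge $p \in B_0(\Z)$; by the finiteness hypothesis on $\eta$ in the statement of the theorem, only finitely many rays of $\hat{\fD}^{\can}$ contribute modulo $I$, so the problem reduces to verifying the two conditions of Definition \ref{defn_consistency_ch3} in a finite situation.

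First I would reduce to the case where $(Y,D)$ admits a toric model. Proposition 1.3 of \cite{MR3415066} provides a sequence of corner blow-ups $(\tilde{Y},\tilde{D}) \to (Y,D)$ after which a toric model exists. A corner blow-up does not change the interior $U$, and on the tropical side it only refines the fan $\Sigma$ without altering the integral affine manifold $B$. Combined with the deformation invariance of log Gromov-Witten invariants (Lemma \ref{lem_defo_inv_ch3}), this allows me to assume from now on that a toric model $p \colon (Y,D) \to (\bar{Y}, \bar{D})$ is given, obtained as a sequence of boundary blow-ups at smooth points of $\bar{D}$ with exceptional curves $E_1,\dots,E_n$ lying over rays $\bar{\rho}_{j(k)}$ of the toric fan of $\bar{Y}$.

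Next I would push the quantum canonical scattering diagram forward along $p$ to the tropicalization $\bar{B}$ of $(\bar{Y},\bar{D})$, and compare it with the scattering diagram $\hat{\fD}^{\iin}$ generated by a set of explicit initial rays. Classically, in \cite{MR3415066} the initial rays sit on the $\bar{\rho}_{j(k)}$ and the attached functions are dilogarithms whose coefficients count multiple covers of the proper transforms of the $E_k$. In the quantum setting, the refined multiple-cover contribution, computed by localization along a $(-1)$-curve together with the standard $\lambda_g$ Hodge integral on $\overline{M}_{g,1}$, produces exactly the quantum dilogarithm of Lemma \ref{lem_ex_ch3},
\[ \hat{f}_k = 1 + q^{-1/2}\, \hat{z}^{\eta([E_k]) - \varphi(m_{\bar{\rho}_{j(k)}})}, \]
so that $\hat{\fD}^{\iin}$ is precisely the initial data to which \cite{bousseau2018quantum_tropical} applies.

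Finally, I would invoke the main theorem of \cite{bousseau2018quantum_tropical}: the unique consistent completion of $\hat{\fD}^{\iin}$ has, on each outgoing ray, Hamiltonians given by generating series of higher genus log Gromov-Witten invariants of $(Y,D)$. Matching these term by term with the Hamiltonians defining $\hat{\fD}^{\can}$ yields the two conditions of Definition \ref{defn_consistency_ch3} and hence consistency. The main obstacle will be the identification performed in the third step: in the classical case this is a direct multiple-cover calculation along a $(-1)$-curve, but in the quantum case one needs the full $\lambda_g$ Hodge integral evaluation together with careful control of the log structure along $\bar{D}$ under the toric degeneration that reduces the problem to the quantum tropical vertex. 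Once that identification is in hand, the concluding appeal to \cite{bousseau2018quantum_tropical} is formal, mirroring the appeal to \cite{MR2667135} in the classical GHK argument.
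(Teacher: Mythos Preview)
Your outline follows the same reduction strategy as the paper, and the overall architecture is correct. However, there is a genuine gap in your final step. Matching the Hamiltonians of the consistent completion $S(\hat{\fD}^{\iin})$ on $\bar{B}$ with those of $\nu(\hat{\fD}^{\can})$ establishes that $\nu(\hat{\fD}^{\can})$ is consistent \emph{on the smooth affine manifold} $\bar{B}$, but the consistency conditions of Definition~\ref{defn_consistency_ch3} are stated for $\hat{\fD}^{\can}$ on $B$, which has a singularity at the origin. The rings $R_{\rho,I}^{\hbar}$ and maps $\hat{\psi}_{\rho,\pm}$ on the $B$ side are not the same as their counterparts $\bar{R}_{\bar{\rho},I}^{\hbar}$ and $\hat{\psi}_{\bar{\rho},\pm}$ on $\bar{B}$: the kinks $\kappa_{\rho,\varphi}$ and the self-intersections $D_\rho^2$ differ, precisely because of the exceptional divisors $E_k$. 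One must construct explicit isomorphisms $p_\rho \colon R_{\rho,I}^{\hbar} \to \bar{R}_{\bar{\rho},I}^{\hbar}$ intertwining all the structure maps and inducing a bijection on quantum broken lines; this is the quantum analogue of Lemma~3.27 of \cite{MR3415066}, and checking it requires a short but genuine $q$-identity balancing the outgoing factor $1 + q^{-1/2}\hat{z}^{E_j}X^{-1}$ on $B$ against the ingoing factor $1 + q^{-1/2}\hat{z}^{-E_j}\bar{X}$ on $\bar{B}$. Without this, consistency on $\bar{B}$ does not automatically yield the second condition of Definition~\ref{defn_consistency_ch3}, which is phrased in terms of $R_{\rho,I}^{\hbar}$.

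You also misidentify the main obstacle. The $\lambda_g$ multiple-cover integral along a $(-1)$-curve that you flag is already computed in the literature (Lemma~23 of \cite{bousseau2018quantum_tropical}, resting on Bryan--Pandharipande \cite{MR2115262}) and can simply be cited. The work specific to this theorem lies rather in the comparison just described, and in Step~II of the GHK reduction (changing the monoid $P$ so that the monomials $\hat{z}^{\eta([E_j])-\varphi_\rho(m_\rho)}$ become invertible), which you omit but which is needed before the push-forward $\nu$ and the inversion of the exceptional-divisor contributions make sense.
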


Let us review the various steps taken by \cite{MR3415066}  to prove the consistency 
of the canonical scattering diagram in the classical case.
\begin{itemize}
\item Step I. We can replace $(Y,D)$ by a 
corner blow-up of $(Y,D)$.
\item Step II. Changing the monoid $P$.
\item Step III. Reduction to the Gross-Siebert locus.
\item Step IV. Pushing the singularities at infinity.
\item Step V. $\bar{\fD}$ satisfies the required compatibility condition.
\end{itemize}

Step I (see \cite[Proposition 3.10]{MR3415066}) is easy in the classical case. The quantum case is similar: the scattering diagram changes only in a trivial way under corner blow-up and we will not say more.

Step II (see \cite[Proposition 3.12]{MR3415066}) is more subtle and involves some regrouping of monomials in the comparison of the broken lines for two different monoids. 
Exactly the same regrouping operation deals with the quantum case too.

Step III in \cite{MR3415066}  requires an understanding of 
genus-0 multicover contributions of exceptional divisors of a toric model. We explain in 
\cref{subsection_GS_locus} how the quantum analogue is obtained from the knowledge of higher-genus multicover contributions. 

Step IV in \cite{MR3415066} is the reduction of the consistency of 
$\fD^{\can}$ to the consistency of a scattering diagram 
$\nu(\fD^{\can})$ on an 
integral affine manifold without singularities. We explain in 
\cref{subsection_pushing},
\cref{section_comparing_ch3} and
\cref{section_end_proof_ch3} how the consistency of the quantum scattering diagram  $\hat{\fD}^{\can}$ can be reduced to the consistency of a
quantum scattering diagram 
$\nu(\hat{\fD}^{\can})$ on an integral affine 
manifold without singularities.

Step V in \cite{MR3415066} 
is the proof of consistency of 
$\nu(\fD^\can)$ and
ultimately relies on the main result of \cite{MR2667135}.
We explain in 
\cref{section_consistency_ch3} how its $q$-analogue,
the consistency of 
$\nu(\hat{\fD}^\can)$, 
ultimately relies on the main result of \cite{bousseau2018quantum_tropical}.

\subsection{Reduction to the Gross-Siebert locus.}
\label{subsection_GS_locus}
We start by recalling some notation of 
\cite{bousseau2018quantum_tropical}.

Let $\fm =(m_1, \dots, m_n)$ be an $n$-tuple of primitive non-zero vectors of 
$M=\Z^2$. 
We add extra rays to the fan given by the rays  
$-\R_{\geqslant 0} m_1, \dots, -
\R_{\geqslant 0} m_n$ such that the resulting fan defines a smooth projective toric surface $\bar{Y}_\fm$.
 The choice of the added
rays will be irrelevant for us
(ultimately because of the log birational invariance result in logarithmic Gromov--Witten theory proved in 
\cite{MR3778185}) and so is not included in the notation. 
Denote by $\partial \bar{Y}_\fm$ the anticanonical toric divisor of $\bar{Y}_\fm$, and let $D_{m_1}, \dots, D_{m_n}$
be the irreducible components of  $\partial \bar{Y}_\fm$ dual to the rays 
$-\R_{\geqslant 0} m_1, \dots, -
\R_{\geqslant 0} m_n$.

For every $j=1, \dots, n$, we blow up a point
$x_j$ in general position on the toric divisor $D_{m_j}$. 
Remark that it is possible to have 
$\R_{\geqslant 0} m_j = \R_{\geqslant 0} m_{j'}$, and so $D_{m_j}=D_{m_{j'}}$, for $j \neq j'$, and that in this case we blow
up several distinct points on the same toric divisor. We denote $Y_{\fm}$ the resulting
projective surface and 
$\pi \colon 
Y_{\fm} \rightarrow 
\bar{Y}_\fm$ the blow-up morphism.
Let $E_j \coloneqq \pi^{-1}(x_j)$ be the exceptional divisor over $x_j$.
We denote 
$\partial Y_{\fm}$ the strict transform of $\partial \bar{Y}_\fm$.

Using Steps I and II and the deformation invariance of log Gromov--Witten invariants in log smooth families,
we can make the following assumptions
(see \cite[Assumptions 3.13]{MR3415066}).
\begin{itemize}
\item There exists an $n$-tuple
$\fm=(m_1,\dots,m_n)$ of primitive non-zero vectors 
of $M=\Z^2$ such that $(Y,D)=(Y_\fm,\partial Y_\fm)$.
\item The map $\eta \colon NE(Y) \rightarrow P$ is an
inclusion and $P^{\times}=\{0\}$.
\item There is an ample divisor $H$ on 
$\overline{Y}$ such that there is a face of $P$ whose intersection with 
$NE(Y)$ is the face $NE(Y) \cap (p^{*}H)^\perp$ generated by the classes $[E_{ j}]$
of exceptional divisors. 
Let $G$ be the prime monomial ideal of $R$
generated by the complement of this face.
\item $J=P-\{0\}$.
\end{itemize}

Following \cite[Definition 3.14]{MR3415066}, the Gross-Siebert locus is the open torus orbit 
$T^{\mathrm{gs}}$ of the toric face 
$\Spec \kk[P]/G$ of $\Spec \kk[P]$.

\begin{prop} \label{lem_mod_G}
For each ray $\rho$ of $\Sigma$,with primitive generator
$m_\rho \in \Lambda_{\rho}$
pointing away from the origin, the
Hamiltonian $\hat{H}_{\rho}$ attached to $\rho$
in the scattering diagram $\hat{\fD}^\can$ satisfies 
\[ \hat{H}_{\rho} = i \sum_{j, D_{m_j}=D_\rho}
\sum_{\ell \geqslant 1}\frac{1}{\ell} \frac{(-1)^{\ell-1}}{2 \sin \left( \frac{\ell \hbar}{2} \right)} \hat{z}^{\ell [E_{j}]-\ell \varphi_{\rho}(m_\rho)}   \mod G \,.\]
\end{prop}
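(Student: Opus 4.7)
The strategy is to exploit the reduction modulo $G$ to drastically restrict the classes $\beta$ that contribute to $\hat{H}_\rho$, and then to invoke the main result of \cite{bousseau2018quantum_tropical} to compute the surviving multiple-cover contributions explicitly.

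First, I would use that $G$ is the prime monomial ideal complementary to the face $F = NE(Y) \cap (p^{*}H)^\perp$, which by assumption is generated by the exceptional classes $[E_1], \dots, [E_n]$. Modulo $G$, the only monomials $\hat{z}^{\eta(\beta)}$ that survive are those with $\eta(\beta) \in F$, so only classes $\beta = \sum_{j} n_j [E_j]$ with $n_j \in \NN$ can contribute to $\hat{H}_\rho \bmod G$. Using $[E_j] \cdot \tilde{D}_{m_k} = \delta_{jk}$, the membership condition $\beta \in NE(Y)_\rho$ (i.e.\ $\beta \cdot D_{\rho'} = 0$ for $\rho' \neq \rho$ and $\beta \cdot D_\rho > 0$) immediately forces $n_j = 0$ whenever $D_{m_j} \neq D_\rho$, so only classes supported on the exceptional divisors above $D_\rho$ remain.

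Second, I would show that among these remaining classes $\beta = \sum_{j \colon D_{m_j} = D_\rho} n_j [E_j]$, only the pure classes $\beta = \ell [E_j]$ with a single non-zero coefficient give a nonzero $N_{g,\beta}^{Y/D}$. By deformation invariance (Lemma \ref{lem_defo_inv_ch3}), I can assume that the blown-up points $x_j$ are in general position on $D_\rho$. A stable log map $f \colon C \to Y$ of class $\beta$ then satisfies $(\pi \circ f)_{*}[C]=0$, so its image in $\bar{Y}$ is a finite union of the points $x_j$, and $f(C)$ is set-theoretically contained in $\bigsqcup_j E_j$. Since $C$ is connected and the $E_j$'s are disjoint, $f(C)$ lies in a single $E_j$, which forces the class to be of the form $\ell [E_j]$ and excludes all mixed contributions.

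Third, for each $j$ with $D_{m_j} = D_\rho$ and each $\ell \geqslant 1$, we have $\ell_\beta = \ell$ and $\eta(\beta) - \varphi_\rho(\ell_\beta m_\rho) = \ell\,\eta([E_j]) - \ell\,\varphi_\rho(m_\rho)$. The exceptional curve $E_j$ is a $(-1)$-curve meeting $D$ transversally at a single point of $D_\rho$, and the higher-genus log invariants of its $\ell$-fold covers with maximal tangency are exactly what the main theorem of \cite{bousseau2018quantum_tropical} evaluates. It gives the quantum dilogarithm formula
\[
\sum_{g \geqslant 0} N_{g,\ell[E_j]}^{Y/D}\,\hbar^{2g-1}
\;=\; \frac{(-1)^{\ell-1}}{\ell}\cdot\frac{1}{2\sin(\ell\hbar/2)}.
\]
Substituting this into the definition of $\hat{H}_\rho$ and summing over the pairs $(j,\ell)$ surviving the first two steps yields precisely the stated formula.

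The main obstacle is the vanishing in step two: although geometrically transparent after specialising the points $x_j$ to be generic, ruling out contributions from reducible or disconnected domains (which is what the $(-1)^g\lambda_g$ insertion sees virtually) requires combining the image-connectedness argument above with the fact that a log curve with tangency only along $D_\rho$ and image in a connected component of $\bigsqcup_j E_j$ cannot connect two distinct exceptional divisors without picking up intersection with another boundary component. The multiple-cover evaluation in step three, by contrast, is a direct citation of \cite{bousseau2018quantum_tropical}.
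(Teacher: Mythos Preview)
Your argument is correct and follows the same route as the paper: restrict modulo $G$ to classes supported on exceptional divisors, reduce to pure multiples $\ell[E_j]$ by connectedness of the domain, and then plug in the known higher-genus multiple-cover formula for a $(-1)$-curve. Two small corrections: in Step~1 the intersection formula $[E_j]\cdot \tilde D_{m_k}=\delta_{jk}$ is not literally right when several indices $j$ share the same boundary component (then $\tilde D_{m_j}=\tilde D_{m_k}$), though your conclusion survives because the $n_j$ are nonnegative; and in Step~3 the input you need is not the main theorem of \cite{bousseau2018quantum_tropical} but its Lemma~23, which is the explicit Bryan--Pandharipande local-curve computation for degree-$\ell$ covers of a $(-1)$-curve with a $\lambda_g$ insertion.
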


\begin{proof}
The only contributions to $\hat{H}_{\rho} \mod G$ come from the multiple covers of the exceptional divisors $E_{j}$.
The result then follows from \cite[Lemma 
23]{bousseau2018quantum_tropical}, which relies on the study of Gromov--Witten theory of local curves done 
by Bryan and Pandharipande in
\cite{MR2115262}.
\end{proof}

\begin{prop}
The canonical quantum scattering diagram 
$\hat{\fD}^\can$ is a scattering diagram for the data 
$(B,\Sigma)$, $P$, $G$ and $\varphi$.
Concretely, for every ideal $I$ of $P$ of radical $G$, there are only finitely many 
rays such $(\fd, \hat{H}_{\fd})$
such that $\hat{H}_{\fd} \neq 0 \mod I$.
\end{prop}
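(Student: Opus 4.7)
The plan is to verify the three hypotheses of Lemma \ref{lem_assumptions_ch3} applied with $J = G$; the Proposition then follows directly, as the second assertion is exactly the fourth clause of Definition \ref{def_quantum_scattering} which Lemma \ref{lem_assumptions_ch3} produces. Under the setup of Section \ref{subsection_GS_locus}, $G$ is the monomial ideal complementary to the face of $P$ generated by the exceptional classes $\{\eta([E_j])\}_j$, so $\eta(\beta) \notin G$ precisely when $p_*\beta = 0$, i.e.\ when $\beta = \sum_j a_j[E_j]$ with $a_j \in \Z_{\geq 0}$.

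For the first hypothesis, suppose $\fd$ is interior to a two-dimensional cone $\sigma_{\rho_-,\rho_+}$ and $\beta \in NE(Y)_\fd$ satisfies $\eta(\beta) \notin G$; then $\beta = \sum_j a_j[E_j]$, and the image of any stable log map of class $\beta$ is contained in $\bigcup_{a_j > 0} E_j$. Each exceptional curve $E_j$ projects to a smooth interior point of a single boundary component $\bar D_{m_j}$ and hence misses every toric $0$-stratum of $\bar Y$, and therefore of $D$. But the tangency datum $\ell_\beta m_\fd = \ell_\beta(n_+ m_{\rho_+} + n_- m_{\rho_-})$ attached to $\fd$ forces a marked point of the source to map to the corner $D_{\rho_-} \cap D_{\rho_+}$, a contradiction. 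Hence $\overline{M}_g(Y/D,\beta) = \emptyset$ and $N^{Y/D}_{g,\beta} = 0$. The second hypothesis is vacuous: $\kappa_{\rho,\varphi} = \eta([D_\rho])$, and $p_*[D_\rho] = [\bar D_\rho] \neq 0$, so $[D_\rho]$ does not lie in the face generated by the $[E_j]$'s and thus $\kappa_{\rho,\varphi} \in G$ always.

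For the third (finiteness) hypothesis, which encapsulates the numerical statement of the Proposition, fix an ideal $I \subseteq P$ of radical $G$ and choose $N$ with $G^N \subseteq I$. The condition $\eta(\beta) \notin I$ forces $\beta \cdot p^*H = (p_*\beta)\cdot H < N$, since the degree homomorphism $P \to \Z_{\geq 0}$ induced by $p^*H$ has kernel exactly the $[E_j]$-face and is strictly positive elsewhere. Ampleness of $H$ on the toric surface $\bar Y$ restricts $\bar\beta := p_*\beta$ to a finite subset of $NE(\bar Y)$. For each such $\bar\beta$, I would then bound the set of lifts $\beta \in NE(Y)$ with $p_*\beta = \bar\beta$ and $N^{Y/D}_{g,\beta} \neq 0$ for some $g$: any effective decomposition $\beta = \sum_\ell b_\ell[\tilde C_\ell] + \sum_j a_j[E_j]$ is controlled by a decomposition $\bar\beta = \sum_\ell b_\ell[C_\ell]$ into irreducible effective classes on $\bar Y$ (finitely many for fixed $\bar\beta$), together with exceptional multiplicities $a_j$ which are pinned by the constraints $\beta \cdot D_{\rho'} = 0$ for boundary rays $\rho' \neq \rho_\pm$ (fixing $\sum_{j : E_j \subset p^{-1}(\bar D_{\rho'})} a_j$) and bounded for $\rho' \in \{\rho_\pm\}$ by the requirement that a stable log map of class $\beta$ be connected while carrying the prescribed tangency at the corner. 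The main obstacle is this last connectedness bound: a priori, adding arbitrary multiples of $[E_j]$ for $j$ over $D_{\rho_\pm}$ would modify $\beta \cdot D_{\rho_\pm}$ and so produce infinitely many distinct directions $m_\fd$; resolving this requires showing that a connected stable log realizing the corner tangency can absorb only boundedly many exceptional components, in parallel with the classical argument in Section 3 of \cite{MR3415066} but now applied at the level of higher genus log Gromov--Witten invariants.
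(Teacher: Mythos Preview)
Your plan has a genuine gap: the third hypothesis of Lemma \ref{lem_assumptions_ch3} is simply \emph{false} for $J = G$, so that route is blocked. For every $j$ and every $\ell \geq 1$, the class $\beta = \ell[E_j]$ lies in $NE(Y)_{\rho_{m_j}}$, has $N^{Y/D}_{0,\beta} \neq 0$ (these are precisely the exceptional multicovers of Proposition \ref{lem_mod_G}), and satisfies $\eta(\beta) = \ell\,\eta([E_j]) \in P \setminus G$, hence $\eta(\beta) \notin I$ for every $I$ with $\sqrt{I} = G$. This is an infinite family of contributing classes, and no connectedness argument will eliminate them: the maps exist and the invariants are nonzero. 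Your first two checks are correct (and the second is indeed vacuous), but the lemma's hypotheses are strictly stronger than what the Proposition asserts, and they do not hold here.

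What the Proposition requires is only finitely many \emph{rays}, not finitely many classes. The infinitely many multicovers above all live on the finitely many one-dimensional cones $\rho$ of $\Sigma$, so they pose no problem. The paper's proof simply cites Corollary 3.16 of \cite{MR3415066}, observing that the argument there is purely about curve classes and is insensitive to the genus. Concretely, for interior $\fd$ with $\hat H_\fd \neq 0 \bmod I$, pick $\beta \in NE(Y)_\fd$ with $N^{Y/D}_{g,\beta} \neq 0$ and $\eta(\beta) \notin I$. One shows $\beta \cdot E_k \geq 0$ for all $k$ (otherwise a component of any stable map dominates $E_k$, and the tropical cycle argument as in the proof of Proposition \ref{prop_comp_ch3} forces the invariant to vanish, since $\beta$ cannot be a pure exceptional multiple when $\fd$ is interior, by your own first-hypothesis argument). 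Then
\[
n_\pm \;\leq\; \ell_\beta n_\pm \;=\; \beta \cdot D_{\rho_\pm}
\;=\; (p_*\beta)\cdot \bar D_{\rho_\pm} \;-\! \sum_{k:\,m_k = m_{\rho_\pm}} (\beta \cdot E_k)
\;\leq\; (p_*\beta)\cdot \bar D_{\rho_\pm},
\]
which is bounded because your $p^*H$-degree argument confines $p_*\beta$ to a finite set. Hence only finitely many primitive directions $m_\fd = n_+ m_{\rho_+} + n_- m_{\rho_-}$ arise. Note that this same inequality dissolves the ``obstacle'' you flagged: the non-negativity $\beta \cdot E_k \geq 0$ already caps the exceptional multiplicities over $D_{\rho_\pm}$ for interior $\fd$, so no separate connectedness bound is needed there. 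The genuine infinitude of classes sits over the rays of $\Sigma$, where it is harmless because those rays are themselves finite in number.
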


\begin{proof}
This follows from the argument given in the proof of \cite[Corollary 3.16]{MR3415066}. It is a geometric argument about curve classes and the genus of the curves plays no role.
\end{proof}

\begin{prop}
If $\hat{\fD}^\can$ is consistent as  a quantum scattering diagram for the data 
$(B,\Sigma)$, $P$, $G$ and 
$\varphi$, then
$\hat{\fD}^\can$ is consistent as  a quantum scattering diagram for the data 
$(B,\Sigma)$, $P$, $J$ and 
$\varphi$.
\end{prop}

\begin{proof}
Identical to the proof of 
\cite[Theorem 3.17]{MR3415066}.
\end{proof}

Following \cite[Remark 3.18]{MR3415066},
we denote by $E \subset P^{\gp}$ the sublattice generated by the face $P\,\backslash\,G$. We naturally have $T^{\mathrm{gs}}
= \Spec \kk[E] \subset \Spec \kk[P]$.
Denote $\fm_{P+E}=(P+E)\,\backslash\,E$.

The following Lemma is formally identical to Lemma 3.19 of \cite{MR3415066}.

\begin{lem}
If $\hat{\fD}^{\can}$, viewed as a quantum scattering diagram for the data 
$(B,\Sigma)$, $P+E$, $\varphi$ and 
$\fm_{P+E}$, is consistent,
then $\hat{\fD}^{\can}$, viewed as a quantum scattering 
diagram for the data 
$(B,\Sigma)$, $P$, $\varphi$ and $G$,
is consistent.
\end{lem}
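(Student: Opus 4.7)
The plan is to reduce consistency over $(P,G)$ to consistency over $(P+E,\fm_{P+E})$ by a flat base change / injective localization argument. The monoid $P+E\subset P^{\gp}$ is obtained from $P$ by adjoining formal inverses of all monomials in the face $P\setminus G$, so the ring map $\kk[P]\to\kk[P+E]$ is the localization at the multiplicative set $S=\{z^p:p\in P\setminus G\}$. Given any monomial ideal $I\subset P$ with radical $G$, the multiplicative set $S$ is disjoint from $I$, so $\tilde{I}\coloneqq I\cdot\kk[P+E]$ is a monomial ideal of $P+E$ with radical $\fm_{P+E}$, and the induced map $\kk[P]/I\to\kk[P+E]/\tilde I$ is injective. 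Tensoring with $\kk_{\hbar}$ gives an injection $R_I^{\hbar}\hookrightarrow\kk_{\hbar}[P+E]/\tilde I$ that is compatible with the skew‑symmetric pairing, and one gets in the same way injections of all the local building blocks $R_{\sigma,I}^{\hbar}$, $R_{\rho,I}^{\hbar}$ into their $(P+E,\tilde I)$ analogues.

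Next I would check that $\hat{\fD}^{\can}$ really gives the same scattering diagram on both sides. The Hamiltonian attached to any ray $\fd$ is a sum of monomials $\hat z^{\eta(\beta)-\varphi_{\tau_\fd}(\ell_\beta m_\fd)}$ indexed by classes $\beta\in NE(Y)_\fd$; these monomials already lie in $\kk_{\hbar}\widehat{[P_{\varphi_{\tau_\fd}}]}$, and their images in $\kk_{\hbar}\widehat{[(P+E)_{\varphi_{\tau_\fd}}]}$ coincide with the Hamiltonians defining $\hat{\fD}^{\can}$ over $P+E$. Consequently the wall‑crossing automorphisms $\hat\Phi_{\hat H_\fd}$ and the elementary functions $\hat f_\fd$ are compatible with the injection, as are the bending rules for quantum broken lines in Definition~\ref{defn_broken_line_ch3}.

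The third step is to transport broken lines and lifts. A quantum broken line $\gamma$ of asymptotic charge $p$ with endpoint $Q$ for $\hat{\fD}^{\can}$ over $P$ is the same combinatorial object as a quantum broken line for $\hat{\fD}^{\can}$ over $P+E$ (the bending rule only sees the Hamiltonians), with $\Mono(\gamma)$ mapped to $\Mono(\gamma)$ under the injection. The finiteness statement of Lemma~\ref{lem_finite} applied to $\tilde I\subset P+E$ bounds the broken lines contributing nontrivially mod~$\tilde I$; pulled back through the injection, this bounds exactly the broken lines contributing nontrivially mod~$I$. Hence $\Lift_Q(p)$ over $P$ is mapped to $\Lift_Q(p)$ over $P+E$.

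Finally, the two consistency conditions of Definition~\ref{defn_consistency_ch3} are equalities of elements in $R_{\sigma,I}^{\hbar}$ (or in $R_{\sigma_\pm,I}^{\hbar}$, stating that two elements are the image of one element of $R_{\rho,I}^{\hbar}$). The first condition is an equality, which follows from the corresponding equality in the $(P+E,\tilde I)$ version by the injectivity established in step one. For the second condition, I would argue that the element $\Lift_\rho(p)\in R_{\rho,I}^{\hbar}$ is uniquely recovered from either of $\Lift_{Q_\pm}(p)$ via the maps $\hat\psi_{\rho,\pm}$ after suitable Ore localization, and its existence is likewise detected by the injection into the $P+E$ world. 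The only point requiring any attention is to verify that the image of $R_{\rho,I}^{\hbar}$ inside its $P+E$‑analogue is exactly the preimage of $\Lift_\rho(p)$ constructed there; this is a straightforward check using that $\tilde I\cap R^{\hbar}_{\rho,I}=I$, which is where injectivity of the localization is really used. I expect this last bookkeeping step, i.e.\ matching the preimage computation across the non‑commutative localization, to be the main technical obstacle, but it is formally parallel to the verification in Lemma~3.19 of \cite{MR3415066} once one replaces commutative monoid rings with their $\kk_{\hbar}$‑algebra quantizations.
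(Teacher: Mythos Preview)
Your proposal is correct and follows essentially the same approach as the paper: the paper's proof is simply ``Identical to the proof of Lemma 3.19 of \cite{MR3415066}'', and what you have written is a careful reconstruction of that localization/injectivity argument adapted to the quantum setting. Your explicit identification of the one nontrivial point---that existence of $\Lift_\rho(p)$ in $R_{\rho,I}^{\hbar}$ must be pulled back through the injection, not merely detected by it---is exactly the content hidden behind the paper's one-line citation.
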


\begin{proof}
Identical to the proof of 
\cite[Lemma 3.19]{MR3415066}.
\end{proof}

It follows that we can replace $P$ by $P+E$, and so from now on we assume that $P^{*}=E$ and 
$G=P\,\backslash\, E$. Concretely, this means that it is enough to check the consistency of 
$\hat{\fD}^{\can}$ by working in rings in which the monomials 
$\hat{z}^{[E_j]-\varphi_\rho(m_\rho)}$
are invertible.

\subsection{Pushing the singularities at infinity}
\label{subsection_pushing}

We first recall the notation introduced 
at the beginning of Step IV of \cite{MR3415066}.

We denote by
$M=\Z^2$ the lattice of cocharacters of the torus acting on the toric surface 
$(\bar{Y}_\fm,\partial \bar{Y}_\fm)$.
Let $(\bar{B},\bar{\Sigma})$
be the tropicalization of $(\bar{Y}_\fm,\partial \bar{Y}_\fm)$.
The affine manifold $\bar{B}$
has no singularity at the origin and so is naturally 
isomorphic to $M_\R=\R^2$.
The cone decomposition $\bar{\Sigma}$
of $M_\R=\R^2$ is simply the fan of 
$\bar{Y}$.
Let $\bar{\varphi}$ be the single-valued $P_\R^{\gp}$-valued on 
$\bar{B}$ such that 
\[ \kappa_{\bar{\rho}, \bar{\varphi}}
=\pi^{*}[\bar{D}_{\bar {\rho}}] \,,\]
for every $\bar{\rho}$ one-dimensional 
cone of $\bar{\Sigma}$ and 
where $\bar{D}_{\bar{\rho}}$ is the toric divisor dual to $\bar{\rho}$.
Since $\bar{\varphi}$
is single-valued and $\bar{B}$
has no singularities, the sheaf 
$\bar{\cP}$,
as defined in 
\cref{section_tropicalization} 
is constant with fiber 
$P^{\gp} \oplus M$.

There is a canonical piecewise linear map 
$\nu \colon B \rightarrow \bar{B}$
which restricts to an integral affine 
isomorphism 
$\nu|_\sigma \colon \sigma \rightarrow 
\bar {\sigma}$ from each two-dimensional cone 
$\sigma$ of $\Sigma$ to the corresponding
two-dimensional cone $\bar{\sigma}$ of 
$\bar{\Sigma}$. This map naturally 
identifies $B(\Z)$ with 
$\bar{B}(\Z)$. Restricted to each two-dimensional cone $\sigma$ of 
$\Sigma$, the derivative $\nu_*$ of $\nu$ induces a identification
$\Lambda_{B,\sigma} \simeq 
\Lambda_{\bar{B},\bar{\sigma}}$,
an isomorphism of monoids 
\[ \tilde{\nu}_\sigma \colon 
P_{\varphi_\sigma} \rightarrow P_{\bar{\varphi}_{\bar{\sigma}}} \]
\[p+\varphi_\sigma(m) \mapsto
p+\bar{\varphi}_{\bar{\sigma}}(\nu_*(m)) \,,\]
for $p\in P$ and $m \in \Lambda_\sigma$, 
and
so an identification of algebras 
of $\kk_{\hbar}[P_{\varphi_\sigma}]$
and $\kk_{\hbar}[P_{\bar{\varphi}_{\bar{\sigma}}}]$.

If $\rho$ is a one-dimensional cone
of $\Sigma$, then
$\nu_*$ is only defined on the tangent space to $\rho$ (not on the full
$\Lambda_\rho$ because $\nu$ is only piecewise linear) and so gives an identification
\[ \tilde{\nu}_\rho 
\colon \{p+\varphi_\rho(m)|\text{
$m$ tangent to $\rho$, $p \in P$}\}
\rightarrow 
\{p+\bar{\varphi}_{\bar{\rho}}(m)|
\text{
$m$ tangent to $\bar{\rho}$, $p \in P$}\} \]
\[ p+\varphi_\rho(m)
\mapsto 
p+\bar{\varphi}_{\bar{\rho}}(\nu_*(m)) \,.\]

We define below a quantum scattering diagram
$\nu(\hat{\fD}^\can)$  
for the data
$(\bar{B},\bar{\Sigma})$,
$P$, $\bar{\varphi}$
and $G$.
\begin{itemize}
\item For every ray 
$(\fd, \hat{H}_\fd)$ of $\hat{\fD}^\can$
contained in the interior of a
two-dimensional cone of $\Sigma$, the quantum scattering diagram $\nu(\hat{\fD}^\can)$
contains the ray 
\[ (\nu(\fd), \tilde{\nu}_{\tau_\sigma}(\hat{H}_\fd))\,,\]
which is outgoing.
\item For every ray 
$(\rho, \hat{H}_{\rho})$,
with $\rho$ a one-dimensional cone of $\Sigma$, and 
so by Proposition \ref{lem_mod_G}, 
\[ \hat{H}_{\rho} =\hat{G}_{\rho}
+ i \sum_{j, D_{m_j}=D_\rho}
\sum_{\ell \geqslant 1}\frac{1}{\ell} \frac{(-1)^{\ell-1}}{2 \sin \left( \frac{\ell \hbar}{2} \right)} \hat{z}^{\ell [E_j]-\ell \varphi_{\rho}(m_\rho)} \,,\]
with $\hat{G}_{\rho}=0 \mod G$, 
the quantum scattering diagram
$\nu(\hat{\fD}^\can)$ contains two rays: 
\[ (\bar{\rho}, \tilde{\nu}_{\tau_\fd}(\hat{G}_{\rho})) \,,\]
which is outgoing,
and 
\[ \left(\bar{\rho},
\,
i
\sum_{j,D_{m_j}
=D_\rho}
\sum_{\ell \geqslant 1}\frac{1}{\ell} \frac{(-1)^{\ell-1}}{2 \sin \left( \frac{\ell \hbar}{2} \right)} \hat{z}^{\ell \bar{\varphi}
(m_\rho)-\ell[E_j]} \right) \,,\]
which is ingoing.
\end{itemize}

In going from 
$\hat{\fD}^{\can}$ to
$\nu(\hat{\fD}^\can)$, we invert
$\hat{z}^{\ell[E_j]
-\ell \bar{\varphi}_{\bar{\rho}}
(m_\rho)}$, which becomes
$\hat{z}^{\ell \bar{\varphi}_{\bar{\rho}}
(m_\rho)-\ell[E_j]}$. 
This makes sense because we are assuming 
$P^*=E$.

\subsection{Consistency of $\nu(\hat{\fD}^\can)$}
\label{section_consistency_ch3}

Let $\hat{\fD}_\fm$ be the quantum scattering diagram for
the data $(\bar{B},\bar{\Sigma})$,
$P$, $\bar{\varphi}$ and $G$,
having, for each $\bar{\rho}$
one-dimensional cone of 
$\bar{\Sigma}$, a ray  
$(\bar{\rho},
\hat{H}_{\bar{\rho}})$
where 
\[\hat{H}_{\bar{\rho}}
\coloneqq i
\sum_{j, D_{m_j}
=D_\rho} 
\sum_{\ell \geqslant 1}\frac{1}{\ell} \frac{(-1)^{\ell-1}}{2 \sin \left( \frac{\ell \hbar}{2} \right)} \hat{z}^{\ell \bar{\varphi}
(m_\rho)-\ell[E_j]}  \,.\] 
Writing $\ell \bar{\varphi}(m_\rho)
-\ell[E_j]
=(\ell m_\rho, \bar{\varphi}(\ell m_\rho)-\ell[E_j])$, it is clear that
$\hat{H}_{\bar{\rho}} \in 
\kk_{\hbar}\widehat{[P_{\overline{\varphi}}]}$, where the monoid
\[ P_{\overline{\varphi}} = \{(m,
\bar{\varphi}(m)+p)|m \in M, p \in P\}\]
is independent of $\overline{\rho}$.

For such quantum scattering diagram
$\hat{\fD}$,
with all Hamiltonians valued in the same ring, it makes sense to define an automorphism $\hat{\theta}_{\gamma,\hat{\fD}}$ of this ring, as in  \cref{subsection_gluing_ch3}, but for $\gamma$ an arbitrary path in $\bar{B}_0$ transverse to the rays of the diagram.
By \cite[ Theorem 6]{MR2181810}, there exists another scattering diagram 
$S(\hat{\fD})$ containing 
$\hat{\fD}$, such that $S(\hat{\fD})
-\hat{\fD}$ consists only of outgoing rays
and $\hat{\theta}_{\gamma,S(\hat{\fD})}$
is the identity for $\gamma$ a loop in $\bar{B}_0$ going around the origin.
We can assume that there is at most one ray of $S(\hat{\fD})-\hat{\fD}$ in each possible outgoing direction.

The scattering diagram $S(\hat{\fD}_\fm)$ is the main object of study of 
\cite{bousseau2018quantum_tropical}\footnote{Comparing the conventions of the present paper and 
\cite{bousseau2018quantum_tropical}, the notions of outgoing and ingoing rays are exchanged. This implies that a global sign must be included in comparing the Hamiltonians of the present paper and  \cite{bousseau2018quantum_tropical}.}.

For every 
$m \in M-\{0\}$, let $P_m$ be the subset of 
$p=(p_1, \dots, p_n) \in \NN^n$ such that 
$\sum_{j=1}^n p_j m_j$ is positively collinear with $m$:
\[ \sum_{j=1}^n p_j m_j = \ell_p m \]
for some $\ell_p \in \NN$.
Given $p \in P_m$, we defined in 
\cite{bousseau2018quantum_tropical}
a curve class $\beta_p \in A_1(Y,\Z)$.

Recall that if $\fd \subset \bar{B}$
is a ray with endpoint the origin
and rational slope, we denote by $m_\fd \in M$ the primitive generator of $\fd$ pointing away from the origin.

The following Proposition expresses 
$S(\hat{\fD}_\fm)$ in terms of the log Gromov--Witten invariants $N_{g,\beta}^{Y_\fm/\partial Y_\fm}$ defined in 
\cref{section_log_gw_ch3} and entering in the definition of
$\hat{\fD}^\can$.

\begin{prop} \label{prop_scat}
The Hamiltonian $\hat{H}_\fd$ attached 
to an outgoing ray 
$\fd$ 
of $S(\hat{\fD}_\fm)-
\hat{\fD}_\fm$
is given by 
\[\hat{H}_{\fd}=
\left(
\frac{i}{\hbar}
\right)
\sum_{p \in P_{m_\fd}} 
\left(
\sum_{g \geqslant 0} N_{g,\beta_p}^{Y_\fm /
\partial Y_\fm}
\hbar^{2g} \right)
\hat{z}^{(-\ell_\beta m_{\fd}, \beta_p - \bar{\varphi}(\ell_\beta m_{\fd}))} \,, \]
where 
$(-\ell_\beta m_{\fd}, \beta_p - \bar{\varphi}(\ell_\beta m_{\fd})) \in P_{\bar{\varphi}}$.
\end{prop}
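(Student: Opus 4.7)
The plan is to reduce Proposition \ref{prop_scat} to the main theorem of \cite{bousseau2018quantum_tropical}, which computes the saturation of exactly the same quantum scattering diagram but phrased in terms of scattering automorphisms rather than in terms of Hamiltonians. The first step is to check that the input data agree. By Lemma \ref{lem_ex_ch3}, for each ingoing ray of $\hat{\fD}_\fm$, the Hamiltonian
\[ \hat{H}_{\bar{\rho}} = i \sum_{j,\, D_{m_j}=D_\rho} \sum_{\ell \geqslant 1} \frac{(-1)^{\ell-1}}{\ell}\,\frac{\hat{z}^{\ell \bar{\varphi}(m_\rho)-\ell[E_j]}}{2\sin(\ell \hbar/2)} \]
exponentiates to a product, one factor $1 + q^{-1/2}\hat{z}^{\bar{\varphi}(m_\rho)-[E_j]}$ per exceptional divisor $E_j$ over $D_\rho$, of quantum dilogarithm automorphisms. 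These are precisely the automorphisms associated to the initial rays of $\hat{\fD}_\fm$ in \cite{bousseau2018quantum_tropical}.

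Next I would invoke the main theorem of \cite{bousseau2018quantum_tropical}: for each primitive $m \in M-\{0\}$, the outgoing ray of $S(\hat{\fD}_\fm)$ in direction $\R_{\geqslant 0} m$ carries an automorphism whose logarithm is a sum over $p \in P_m$, with coefficient $\sum_g N_{g,\beta_p}^{Y_\fm/\partial Y_\fm} \hbar^{2g}$, of the monomials $\hat{z}^{(-\ell_p m, \beta_p - \bar{\varphi}(\ell_p m))}$. Applying Lemma \ref{lem_ex_ch3} in reverse, this automorphism is generated by a Hamiltonian of precisely the form given in the proposition, including the overall $i/\hbar$ prefactor.

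The main obstacle here is not mathematical but notational. The conventions of \cite{bousseau2018quantum_tropical} exchange ingoing and outgoing relative to the present paper, and its Hamiltonians differ from ours by an overall sign, as the footnote points out. I would therefore propagate these conventions carefully through the translation, and verify that the indexing set $P_{m_\fd}$, the integer $\ell_p$, and the class $\beta_p \in A_1(Y_\fm,\Z)$ introduced in \cite{bousseau2018quantum_tropical} match the tangency order $\ell_{\beta_p}$ and the curve class appearing in the definition of $\hat{\fD}^{\can}$ in Section \ref{section_defn_ch3}. Once this bookkeeping is settled, no additional enumerative input is required, since the identification of multiplicities with log Gromov-Witten invariants is already built into the cited theorem.
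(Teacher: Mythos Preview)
Your proposal is correct and takes essentially the same approach as the paper: both reduce the proposition to the main result of \cite{bousseau2018quantum_tropical}. The paper's proof is in fact a one-line citation, leaving implicit the convention-matching (ingoing/outgoing exchange, sign on Hamiltonians, identification of $\ell_p$ and $\beta_p$) that you spell out; your more detailed bookkeeping is helpful but not additional mathematical content.
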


\begin{proof}
This is the main result of 
\cite{bousseau2018quantum_tropical}.
\end{proof}

\begin{prop} \label{prop_comp_ch3}
We have $S(\hat{\fD}_\fm)=\nu (\hat{\fD}^\can)$. 
\end{prop}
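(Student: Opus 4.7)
The plan is to compare the two quantum scattering diagrams $S(\hat{\fD}_\fm)$ and $\nu(\hat{\fD}^\can)$ by identifying their ingoing and outgoing parts separately and matching them direction by direction. Both diagrams live on $(\bar{B},\bar{\Sigma})$ and are obtained from $\hat{\fD}_\fm$ by adding outgoing rays, so the question reduces to checking that these outgoing completions agree.

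First I would identify the ingoing rays. By construction $S(\hat{\fD}_\fm)-\hat{\fD}_\fm$ consists only of outgoing rays, so the ingoing part of $S(\hat{\fD}_\fm)$ is exactly $\hat{\fD}_\fm$. On the other side, all rays of $\hat{\fD}^\can$ are outgoing by Lemma \ref{lem_assumptions_ch3}, and the only ingoing rays introduced in the definition of $\nu(\hat{\fD}^\can)$ live on the rays $\bar{\rho}$ of $\bar{\Sigma}$ and come from the pure multi-cover part of $\hat{H}_\rho$ isolated by Proposition \ref{lem_mod_G} (with monomials inverted, which is allowed by the assumption $P^{*}=E$). A direct term-by-term check shows that these are exactly the ingoing rays of $\hat{\fD}_\fm$.

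Next I would match the outgoing rays direction by direction. For each rational direction $m\in M$, let $\bar{\tau}(m)\in\bar{\Sigma}$ denote the smallest cone containing $m$ and $\tau(m) = \nu^{-1}(\bar{\tau}(m))$ the corresponding cone of $\Sigma$. The outgoing ray of $\nu(\hat{\fD}^\can)$ in direction $m$ is obtained from the ray $\fd$ of $\hat{\fD}^\can$ in direction $m$ lying in $\tau(m)$: if $\bar{\tau}(m)$ is two-dimensional it is $\tilde{\nu}_{\tau(m)}(\hat{H}_\fd)$, while if $\bar{\tau}(m)=\bar{\rho}$ is a ray of $\bar{\Sigma}$ it is $\tilde{\nu}_\rho(\hat{G}_\rho)$, the non-multi-cover part of $\hat{H}_\rho$. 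By definition of $\hat{\fD}^\can$ these Hamiltonians are explicit generating series of higher genus log Gromov-Witten invariants $N^{Y/D}_{g,\beta}$ indexed by $\beta\in NE(Y)_\fd$, with the pure multi-cover classes $\beta=\ell[E_j]$ removed in the 1-d case. On the other side, Proposition \ref{prop_scat} gives the outgoing ray of $S(\hat{\fD}_\fm)$ in direction $m$ as a generating series of invariants $N^{Y_\fm/\partial Y_\fm}_{g,\beta_p}$ indexed by $p\in P_m$, with the pure multi-cover tuples $p=\ell\,\delta_{ij}$ corresponding to the ingoing rays of $\hat{\fD}_\fm$ when $m$ is a ray of $\bar{\Sigma}$. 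I then exhibit a bijection $\beta \leftrightarrow p$ given by $\beta\mapsto (\beta\cdot E_1,\dots,\beta\cdot E_n)$ with inverse $p\mapsto \beta_p$, satisfying $\beta=\beta_p$ and $\ell_\beta=\ell_p$, and verify that unfolding the definitions of $\tilde{\nu}_{\tau(m)}$ and of the monoid $P_{\bar{\varphi}}$, the exponent $\tilde{\nu}_{\tau(m)}(\eta(\beta)-\varphi_{\tau(m)}(\ell_\beta m))$ agrees with $(-\ell_p m,\,\beta_p-\bar{\varphi}(\ell_p m))$.

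The main obstacle is the combinatorial bookkeeping in the one-dimensional case: one must carefully separate on both sides the ingoing multi-cover contributions of $\hat{\fD}_\fm$ from the genuinely outgoing ones, and verify that the bijection $\beta\mapsto p(\beta)$ sends pure multi-cover classes $\ell[E_j]$ exactly to the distinguished tuples $\ell\,\delta_{ij}$. Once this matching of indexing sets is established, the equality of the two scattering diagrams follows because both sides are generating series for the same higher genus log Gromov-Witten invariants of $(Y,D)=(Y_\fm,\partial Y_\fm)$, the deep input being Proposition \ref{prop_scat}, i.e.\ the main result of \cite{bousseau2018quantum_tropical}.
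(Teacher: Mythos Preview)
Your overall strategy matches the paper's: match the ingoing rays (which both sides inherit from $\hat{\fD}_\fm$), then match the outgoing rays direction by direction using Proposition \ref{prop_scat}. However, there is a genuine gap in your outgoing comparison.

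You claim a bijection $\beta \leftrightarrow p$ given by $\beta \mapsto (\beta\cdot E_1,\dots,\beta\cdot E_n)$ between $NE(Y)_\fd$ (with multi-cover classes removed along rays) and $P_{m_\fd}$. But this map need not land in $\NN^n$: writing $\beta = \pi^*\pi_*\beta - \sum_j p_j E_j$, the integers $p_j = \beta\cdot E_j$ can be negative for $\beta \in NE(Y)_\fd \cap G$. So the indexing set on the $\nu(\hat{\fD}^\can)$ side is genuinely larger than $P_{m_\fd}$, and your ``bijection'' is only an injection $P_{m_\fd} \hookrightarrow NE(Y)_\fd$.

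The missing step, which is the actual content of the paper's proof, is a vanishing result: one must show that $N_{g,\beta}^{Y_\fm/\partial Y_\fm} = 0$ whenever $\beta \in NE(Y)_\fd \cap G$ has some $p_j < 0$. The paper argues this geometrically. If $p_j < 0$ then $\beta\cdot E_j < 0$, so every stable log map of class $\beta$ has a component dominating $E_j$. A tropical/balancing argument (the ``cycle argument'' of Proposition 11 of \cite{bousseau2017tropical} and Lemma 15 of \cite{bousseau2018quantum_tropical}) then forces the source curve to contain a cycle of components mapping surjectively onto $\partial Y_\fm$, and the vanishing properties of the top lambda class $\lambda_g$ on curves with such a cycle kill the invariant. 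The case $\fd = -\R_{\geqslant 0} m_j$ needs separate handling, with the exception of pure multiples of $E_j$ being excluded precisely by the condition $\beta \in G$. Without this vanishing argument, the two generating series cannot be identified.
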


\begin{proof}
We compare the explicit description of 
$S(\hat{\fD}_\fm)$ given by 
Proposition \ref{prop_scat} with the explicit description of $S(\hat{\fD}_\fm)$ obtained from its definition in \cref{subsection_pushing}
and from the definition of $\hat{\fD}^\can$ in \cref{section_defn_ch3}.

The ingoing rays obviously coincide.

Let $\fd$ be an outgoing ray. The corresponding Hamiltonian in $\nu(\hat{\fD}^\can)$ involves the log Gromov--Witten invariants $N_{g,\beta}^{Y_\fm/\partial Y_\fm}$ for 
\[ \beta \in NE(Y)_\fd \cap G\,,\] 
whereas the corresponding Hamiltonian in 
$S(\hat{\fD}_\fm)$ involves the log Gromov--Witten invariants $N_{g,\beta_p}^{Y_\fm/\partial Y_\fm}$ for $p \in P_{m_\fd}$.
The only thing to show is that $N_{g,\beta}^{Y_\fm/\partial Y_\fm}=0$ if $\beta \in 
NE(Y)_\fd \cap G$ is not of the form
$\beta_p$ for some $p \in P_{m_\fd}$.

Recall that we have the blow-up morphism $\pi \colon Y_{\fm}
\rightarrow \bar{Y}_\fm$. 
Let $\beta \in NE(Y)_\fd \cap G$.
We can uniquely write 
$\beta=\pi^* \pi_* \beta - \sum_{j=1}^n
p_j E_j$ for some $p_j \in \Z$,
$j=1,\dots,n$. If $p_j \geqslant 0$ for every $j=1,\dots,n$, then $p=(p_1,\dots,p_n)\in \NN^n$ and $\beta=\beta_p$.

Assume that there exists $1\leqslant j \leqslant n$ such that $p_j<0$. Then 
\mbox{$\beta \cdot E_j=p_j<0$} and so every stable 
log map $f \colon C \rightarrow Y_\fm$ of class $\beta$ has a component dominating $E_j$. If $\fd
\neq -\R_{\geqslant 0}m_j$, 
then we can employ an analogue of the cycle argument of
\cite[Proposition 11]{MR3904449} and  
\cite[Lemma 15]{bousseau2018quantum_tropical}.
Knowing the asymptotic behavior of the tropical map to the tropicalization $B$ of $Y_\fm$, imposed by the tangency condition $\ell_\beta m_\fd$, and repeatedly using the balancing condition, we get that $C$ needs to contain a cycle of components mapping surjectively to $\partial Y_\fm$. Vanishing properties of the lambda class (see, for example, \cite[Lemma 8]{MR3904449}) then imply that $N_{g,\beta}^{Y_\fm
/\partial Y_\fm}=0$.
If $\fd= -\R_{\geqslant 0}m_j$ for some $j$, then the same argument implies the vanishing of $N_{g,\beta}^{Y_\fm/\partial Y_\fm}$, unless $\beta$ is a multiple of some $E_j$, which is not the case by the assumption $\beta \in G$.

\end{proof}

The following Proposition is the quantum version of \cite[Theorem 3.30]{MR3415066}.

\begin{prop} \label{prop_cps_ch3} 
Let $I$ be an ideal
of $P$ of radical $G$.
If $Q$ and $Q'$ are two points in general 
position in $M_\R - \Supp(S(\hat{{\fD}}_\fm))_I$, and 
$\gamma$ is a path connecting $Q$ and $Q'$
for which 
$\hat{\theta}_{\gamma,S(\hat{\fD}_\fm)_I}$ is defined, then 
\[\Lift_{Q'}(p)
=\hat{\theta}_{\gamma,S(\hat{\fD}_\fm)_I}(\Lift_Q(p))\]
as elements of 
$\kk_{\hbar}[P_{\bar{\varphi}}]/I$.
\end{prop}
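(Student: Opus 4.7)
The plan is to adapt the classical proof of Theorem 3.30 of \cite{MR3415066}, itself following \cite{cps}, to the quantum setting along the lines of Mandel \cite{mandel2015refined} for smooth integral affine manifolds. Since $\bar{B}=M_\R$ has no singularity at the origin, the integral affine structure is globally defined and the classical strategy of deforming broken lines across rays applies with minor modifications. First I would reduce, by working modulo a decreasing chain of ideals cofinal with $I$, to the case where $S(\hat{\fD}_\fm)_I$ consists of finitely many rays, and then, by concatenation in the definition of $\hat{\theta}_{\gamma, S(\hat{\fD}_\fm)_I}$ as an ordered product, to the case where $\gamma$ is a short path crossing a single ray $(\fd, \hat{H}_{\fd})$ transversally at one generic point $\gamma(t_0)$.

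With this reduction the heart of the proof is a local comparison of broken lines of charge $p$ ending at $Q$ with those ending at $Q'$. Every broken line $\gamma_Q$ ending at $Q$ either deforms continuously to a broken line ending at $Q'$ whose last segment does not bend on $\fd$, or the deformation across $\fd$ must be compensated by the insertion (respectively deletion) of a bending event on $\fd$ in the last segment, as permitted by Definition \ref{defn_broken_line_ch3}. By the prescribed bending rule, such an event multiplies the outgoing monomial by a term extracted from the expansion of $\prod_{j} \hat{f}_{\fd}(q^j \hat{z})$ or $\prod_{j} \hat{f}_{\fd}(q^{-j-1}\hat{z})^{-1}$, depending on the sign $\epsilon$ of $-\langle m(\hat{H}_{\fd}), \gamma'(t_0)\rangle$. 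Summing over all broken lines and all possible numbers of inserted bends, the contribution to $\Lift_{Q'}(p)$ is obtained from that of $\Lift_Q(p)$ by applying the automorphism $\hat{\theta}_{\gamma, \fd}$, which by the closed-form computation of Section \ref{section_aut_ch3} equals conjugation by $\exp(\epsilon\, \hat{H}_{\fd})$.

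The main obstacle is ensuring that the $q$-twists in the quantum torus algebra match correctly on both sides. When a broken line acquires an additional bend at $\fd$, the new segment's monomial arises as the right-multiplication of the old one by a bending factor, whereas conjugation by $\exp(\epsilon \hat{H}_{\fd})$ in $\hat{\theta}_{\gamma, \fd}$ permutes this factor through the pre-existing monomial, producing a power of $q$ governed by the symplectic pairing. Verifying that these two prescriptions agree reduces to the commutation identity $\hat{z}^{p'}\hat{z}^{p} = q^{\langle \mathfrak{r}(p'),\mathfrak{r}(p)\rangle}\hat{z}^{p}\hat{z}^{p'}$ combined with the product formulas for $\hat{\Phi}_{\hat{H}_{\fd}}$ derived in Section \ref{section_aut_ch3}. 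This is a purely combinatorial identity, essentially carried out by Mandel \cite{mandel2015refined} in the smooth setting; the only additional bookkeeping needed here is to verify the symmetric treatment of outgoing and ingoing rays in $S(\hat{\fD}_\fm)$, which follows from the parallel forms of the two cases in Lemma \ref{lem_ex_ch3}.
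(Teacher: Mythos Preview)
Your proposal follows the same overall approach as the paper --- adapting the classical CPS/GHK deformation-of-broken-lines argument to the quantum setting, as in Mandel \cite{mandel2015refined} --- and your treatment of the local wall-crossing step (insertion or deletion of a bend on $\fd$ when the endpoint crosses that single ray, together with the verification that the $q$-twists match the conjugation formula for $\hat{\Phi}_{\hat{H}_\fd}$) is correct.

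However, your sketch omits what the paper explicitly singles out as the key input: the fact that $\hat{\theta}_{\gamma,S(\hat{\fD}_\fm)}$ is the identity for $\gamma$ a small loop around the origin, which holds by the very construction of $S(\hat{\fD}_\fm)$. This is not needed for the endpoint crossing you describe, but for a second type of event that occurs during the deformation: an \emph{interior} bend of a broken line sliding along its ray down to the origin, where all the rays of $S(\hat{\fD}_\fm)$ meet. At such a moment the broken line does not ``deform continuously'' across; rather one must match the broken lines bending just before the origin with those bending just after, possibly on different rays and in different orders. The identity $\hat{\theta}_{\mathrm{loop}}=\id$ is exactly what guarantees that this matching preserves the sum $\Lift_Q(p)$. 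Your reduction to a short path crossing one ray does not avoid this, because even within a single chamber a small motion of the endpoint can push an interior vertex through $0$; equivalently, you are implicitly using that $\Lift_Q(p)$ depends only on the chamber of $Q$, which is itself the $\gamma$-crosses-no-ray case of the proposition and already requires this ingredient. The classical references you cite do contain this step, so the direction is right, but as written the sketch is incomplete at precisely the point the paper calls the heart of the proof.
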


\begin{proof}
The key input is that, by construction,
$\hat{\theta}_{\gamma,S(\hat{\fD}_\fm)}$ is the identity for a loop $\gamma$ in $\bar{B}_0$ going around the origin.
Proofs of the classical statement can be found in 
\cite{cps}, \cite[\S 5.4]{MR2722115} and \S 3.2 of the first arXiv version of 
\cite{MR3415066}. Putting hats everywhere, the same argument proves the quantum version, without extra complication.
\end{proof}

\subsection{Comparing $\hat{\fD}^\can$ and $\nu(\hat{\fD}^\can)$}
\label{section_comparing_ch3}

In order to obtain the consistency of 
$\hat{\fD}^\can$ from some properties of 
$\nu(\hat{\fD}^\can)$, we need to
compare the rings $R^{\hbar}_{\sigma,I}$,
$R^{\hbar}_{\rho,I}$ coming from
$(B,\Sigma)$, $\varphi$, with the 
corresponding rings
$\bar{R}^{\hbar}_{\sigma,I}$,
$\bar{R}^{\hbar}_{\rho,I}$ coming from
$(\bar{B},\bar{\Sigma})$, $\bar{\varphi}$.
Such comparison is done in the 
following Lemma.

\begin{lem}
\label{lem_isom_ch3}
There are isomorphisms 
$p_\rho \colon R^{\hbar}_{\rho,I}
\rightarrow \bar{R}^{\hbar}_{\bar{\rho},I}$
and \mbox{$p_\sigma \colon 
R^{\hbar}_{\sigma,I}
\rightarrow \bar{R}^{\hbar}_{\bar{\sigma},I}$},
intertwining 
\begin{itemize}
\item the maps
$\hat{\psi}_{\rho,-} \colon
R_{\rho,I}^{\hbar}
\rightarrow R_{\sigma_-,I}^{\hbar}$
and 
$\hat{\psi}_{\bar{\rho},-} \colon
\bar{R}_{\bar{\rho},I}^{\hbar}
\rightarrow \bar{R}_{\bar{\sigma}_-,I}^{\hbar}$, 
\item the maps $\hat{\psi}_{\rho,+} \colon
R_{\rho,I}^{\hbar}
\rightarrow R_{\sigma_+,I}^{\hbar}$
and 
$\hat{\psi}_{\bar{\rho},+} \colon
\bar{R}_{\bar{\rho},I}^{\hbar}
\rightarrow \bar{R}_{\bar{\sigma}_+,I}^{\hbar}$,
\item the automorphisms
$\hat{\theta}_{\gamma,\hat{\fD}^\can}:
R_{\sigma,I}^{\hbar} \rightarrow 
R_{\sigma,I}^{\hbar}$ and 
\mbox{$\hat{\theta}_{\bar{\gamma},\nu(\hat{\fD}^\can)}:
R_{\bar{\sigma},I}^{\hbar} \rightarrow 
R_{\bar{\sigma},I}^{\hbar}$}, where $\gamma$ is a path in $\sigma$ for which 
$\hat{\theta}_{\gamma,\fD^\can}$
is defined and $\bar{\gamma}
=\nu \circ \gamma$. 
\end{itemize}
\end{lem}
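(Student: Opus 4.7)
The plan is to define $p_\sigma$ from the cone-wise integral affine isomorphism $\tilde{\nu}_\sigma$ and to take $p_\rho$ to be the ``identity on generators'', $X \mapsto \bar X$ and $X_{\pm} \mapsto \bar X_{\pm}$. Since $\nu$ restricts to an orientation-preserving integral affine isomorphism on each two-dimensional cone, $\tilde{\nu}_\sigma$ preserves the canonical symplectic pairing on $\Lambda$, so passing to quantum monoid algebras and reducing modulo $I$ gives an $R^\hbar_I$-algebra isomorphism
\[
p_\sigma \colon R^\hbar_{\sigma,I} \xrightarrow{\sim} \bar R^\hbar_{\bar\sigma,I}.
\]
Intertwining of $p_\sigma$ with $\hat\theta_{\gamma,\hat\fD^\can}$ and $\hat\theta_{\bar\gamma,\nu(\hat\fD^\can)}$ is then automatic from the construction of $\nu(\hat\fD^\can)$ in Section \ref{subsection_pushing}: the rays of $\hat\fD^\can$ in the interior of $\sigma$ are in bijection with the rays of $\nu(\hat\fD^\can)$ in the interior of $\bar\sigma$, with Hamiltonians matched by $\tilde{\nu}_\sigma$.

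Compatibility of $p_\rho$ with $\hat\psi_{\rho,+}$ on $X$ and $X_+$, and with $\hat\psi_{\rho,-}$ on $X$ and $X_-$, is trivial: on these generators the maps $\hat\psi_{\rho,\pm}$ reduce to $\hat z^{\varphi_\rho(\cdot)}$ and are transported through $p_{\sigma_{\pm}}$ identically. The intertwining on the ``opposite'' generator ($X_+$ via $\hat\psi_{\rho,-}$ and $X_-$ via $\hat\psi_{\rho,+}$) then follows from the defining relations, once those relations are shown to match under $p_\rho$. So the substance of the proof is to check that
\[
q^{D_\rho^2/2}\,\hat z^{\kappa_{\rho,\varphi}}\, \hat f_{\rho^\out}(q^{-1}\bar X)\, \bar X^{-D_\rho^2}
= q^{\bar D_{\bar\rho}^2/2}\, \hat z^{\bar\kappa_{\bar\rho,\bar\varphi}}\, \hat f_{\bar\rho^\out}(q^{-1}\bar X)\, \hat f_{\bar\rho^\iin}(\bar X)\, \bar X^{-\bar D_{\bar\rho}^2}
\]
and its analogue for $X_-X_+$, inside the localized quantum torus over $\bar R^\hbar_I$ (recall $\hat f_{\rho^\iin}=1$ in $\hat\fD^\can$, since all its rays are outgoing).

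By Proposition \ref{lem_mod_G} combined with Lemma \ref{lem_ex_ch3}, the wall function $\hat f_{\rho^\out}$ splits as the product of $\tilde\nu^{-1}(\hat f_{\bar\rho^\out})$ and the exceptional contribution $\prod_{j,\, D_{m_j}=D_\rho}(1+q^{-1/2}\hat z^{[E_j]-\varphi_\rho(m_\rho)})$. All factors commute (since $\langle m_\rho,m_\rho\rangle=0$ and $\hat z^{[E_j]}$ is central in the base), so after cancelling the common factor $\hat f_{\bar\rho^\out}(q^{-1}\bar X)$ the identity reduces to the bookkeeping relation
\[
\prod_j (1+q^{1/2}\hat z^{[E_j]}\bar X^{-1})
= q^{n_\rho/2}\, \hat z^{\sum_j [E_j]}\, \prod_j (1+q^{-1/2}\bar X\hat z^{-[E_j]})\, \bar X^{-n_\rho},
\]
where $n_\rho$ is the number of boundary blow-ups lying over $D_\rho$. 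This follows from $\bar\kappa_{\bar\rho,\bar\varphi} - \kappa_{\rho,\varphi} = \sum_j [E_j]$ and $\bar D_{\bar\rho}^2 - D_\rho^2 = n_\rho$ together with the single-factor telescoping $z_j(1+q^{-1/2}\bar X z_j^{-1})\bar X^{-1} = q^{-1/2}(1+q^{1/2}z_j\bar X^{-1})$ (with $z_j \coloneqq \hat z^{[E_j]}$), extended across the product using the centrality of the $z_j$. The main obstacle is precisely this bookkeeping: the outgoing binomials at $\rho$ (with exponent $[E_j]-\varphi_\rho(m_\rho)$) must be recognized as the multiplicative inverses of the ingoing binomials at $\bar\rho$ (with exponent $\bar\varphi_{\bar\rho}(m_\rho)-[E_j]$), and the $q$-power and kink shifts must simultaneously absorb the self-intersection difference. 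Once both relations are verified, bijectivity of $p_\rho$ is obvious from the symmetric inverse, completing the proof.
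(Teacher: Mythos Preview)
Your proposal is correct and follows essentially the same route as the paper's proof: define $p_\sigma$ via the monoid isomorphism $\tilde\nu_\sigma$, define $p_\rho$ as the identity on the generators $X, X_\pm$, and reduce everything to checking that the defining relations of $R^\hbar_{\rho,I}$ match those of $\bar R^\hbar_{\bar\rho,I}$ under this assignment. The paper carries out exactly this verification, using the same inputs (Proposition~\ref{lem_mod_G}, Lemma~\ref{lem_ex_ch3}, the identities $D_\rho^2=\bar D_{\bar\rho}^2-n_\rho$ and $\kappa_{\rho,\varphi}=\bar\kappa_{\bar\rho,\bar\varphi}-\sum_j[E_j]$) and the same single-factor identity
\[
(1+q^{1/2}\hat z^{E_j}\bar X^{-1})=q^{1/2}\hat z^{E_j}\bar X^{-1}(1+q^{-1/2}\hat z^{-E_j}\bar X)
\]
that you isolate. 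Your remark that the intertwining of $\hat\psi_{\rho,\pm}$ on the ``opposite'' generator follows from the relation check (using invertibility of $\hat z^{\varphi_\rho(m_{\rho_\pm})}$ in the quantum torus $R^\hbar_{\sigma_\pm,I}$) is a point the paper leaves implicit, so your write-up is slightly more explicit there, but the two arguments are otherwise the same.
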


\begin{proof}
This is a quantum version of
\cite[Lemma 3.27]{MR3415066}. The isomorphism $p_\sigma$
simply comes from the isomorphism of 
monoids 
\mbox{$\tilde{\nu}_\sigma \colon 
P_{\varphi_\sigma} \rightarrow P_{\bar{\varphi}_{\bar{\sigma}}}$}.

Recall from  
\cref{subsection_building_blocks}
that the rings $R_{\rho,I}^{\hbar}$
and $\bar{R}_{\bar{\rho},I}^{\hbar}$
are generated by variables $X_+$,
$X_-$, $X$ and 
$\bar{X}_+$,$\bar{X}_-$,
$\bar{X}$ respectively and we define $p_\rho$ as the morphism of 
$R_{I}^{\hbar}$-algebras such that 
$p_\rho(X_+)=\bar{X}_+$, $p_\rho(X_-)
=\bar{X}_-$, $p_\rho(X)=\bar{X}$.
We have to check that $p_\rho$ is compatible with the relations defining
$R_{\rho,I}^{\hbar}$ and 
$\bar{R}_{\bar{\rho},I}^{\hbar}$.

We have $\hat{f}_{\rho^\iin}=1$.
Using Proposition
\ref{lem_mod_G} and 
Lemma
\ref{lem_ex_ch3}, we can write 
\[ \hat{f}_{\rho^\out}(X)
=\hat{g}_\rho(X)
\prod_{j,D_{m_j}=D_\rho}
(1+q^{-\frac{1}{2}} \hat{z}^{[E_j]} X^{-1})\,,\]
for some $\hat{g}_\rho(X)=1 \mod G$.
Using the definition of 
$\nu(\hat{\fD}^\can)$ given 
in 
\cref{subsection_pushing},
and
Lemma
\ref{lem_ex_ch3}, we have 
\[ \hat{f}_{\bar{\rho}^\iin}
(\bar{X})
=\prod_{j,D_{m_j}=D_\rho}
(1+q^{-\frac{1}{2}} \hat{z}^{-[E_j]}
\bar{X})\,,\]
and 
\[ \hat{f}_{\bar{\rho}^\out}
(\bar{X})
=\hat{g}_\rho(\bar{X}) \,.\]
We need to check that 
\[ p_\rho \left(
q^{\frac{1}{2} D_\rho^2}
\hat{z}^{[D_\rho]}
\hat{f}_{\rho^\iin}(X) \hat{f}_{\rho^\out}(q^{-1} X)X^{-D_{\rho}^2}\right)
=q^{\frac{1}{2} D_{\bar{\rho}}^2}
\hat{z}^{[D_{\bar{\rho}}]} 
\hat{f}_{\bar{\rho}^\iin}(\bar{X})
\hat{f}_{\bar{\rho}^\out}(q^{-1} \bar{X})
\bar{X}^{-D_{\bar{\rho}}^2} \,.\]
We have $D_\rho^2=D_{\overline{\rho}}^2-a_\rho$, where $a_\rho$ is the number of $j$
such that $D_{m_j}=D_\rho$, and
\[[D_\rho]=[D_{\overline{\rho}}] -\sum_{j,D_{m_j}=D_\rho} [E_j]\,.\]
Thus, the desired identity follows from
\[(1+q^{-\frac{1}{2}}\hat{z}^{[E_j]}
(q^{-1} X)^{-1})
=(1+q^{\frac{1}{2}}
\hat{z}^{[E_j]} X^{-1})
=q^{\frac{1}{2}}
\hat{z}^{[E_j]} X^{-1}
(1+q^{-\frac{1}{2}}
\hat{z}^{-[E_j]} X) \,. \]
Similarly, the relation
\[ p_\rho(q^{-\frac{1}{2} D_\rho^2}
\hat{z}^{[D_{\rho}]} \hat{f}_{\rho^\out}(X)
\hat{f}_{\rho^\iin}(qX)
X^{-D_{\rho}^2})
=
q^{-\frac{1}{2} D_{\bar{\rho}}^2}
\hat{z}^{[D_{\bar{\rho}}]} \hat{f}_{\bar{\rho}^\out}(X)
\hat{f}_{\bar{\rho}^\iin}(qX)
X^{-D_{\bar{\rho}}^2} \]
follows from 
\[(1+q^{-\frac{1}{2}}\hat{z}^{[E_j]}
 X^{-1})
=q^{-\frac{1}{2}}
\hat{z}^{[E_j]} X^{-1}
(1+q^{\frac{1}{2}}
\hat{z}^{-[E_j]} X)
=q^{-\frac{1}{2}}
\hat{z}^{[E_j]} X^{-1}
(1+q^{-\frac{1}{2}}
\hat{z}^{-[E_j]} (qX) ) \,. \]
\end{proof}

\begin{lem} \label{lem_bij_ch3}
The piecewise linear map
$\nu \colon B \rightarrow \bar{B}$
induces a bijection between broken lines of 
$\hat{\fD}^\can$ and broken lines of 
$\nu(\hat{\fD}^\can)$.
\end{lem}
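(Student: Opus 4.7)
The plan is to exhibit the bijection explicitly, transporting data segment-by-segment via the monoid isomorphisms $\tilde\nu_\sigma$ from Section \ref{subsection_pushing}, and then to verify the three conditions of Definition \ref{defn_broken_line_ch3}. Given a quantum broken line $\gamma$ for $\hat{\fD}^{\can}$, I set $\bar\gamma := \nu \circ \gamma$; for each maximal linear segment $L$ of $\gamma$ contained in a two-dimensional cone $\sigma$, I transport its monomial $m_L = c_L \hat{z}^{p_L}$ to $m_{\bar L} := c_L \hat{z}^{\tilde\nu_\sigma(p_L)}$. Because $\nu$ is an integral affine isomorphism on each two-dimensional cone, and $\tilde\nu_\sigma$ intertwines $\mathfrak{r}$ with the analogous projection on the $\bar B$ side and $\varphi_\sigma$ with $\bar\varphi_{\bar\sigma}$, the direction condition $-\mathfrak{r}(p_L) = \gamma'(t)$ and the asymptotic condition (as $t \to -\infty$) are preserved by construction.

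The heart of the argument is the bending condition. In Case A, where $\gamma(t)$ lies in the interior of a two-dimensional cone $\sigma$, this is immediate: the rays of $\hat{\fD}^{\can}$ through $\gamma(t)$ are in bijection under $\nu$, with Hamiltonians matched by $\tilde\nu_\sigma$, with the outgoing rays of $\nu(\hat{\fD}^{\can})$ through $\bar\gamma(t)$; the product $\prod_k \hat{f}_{\fd_k}$ appearing in Definition \ref{defn_broken_line_ch3} is therefore carried over term by term, and a choice of monomial on one side determines a unique choice on the other.

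Case B, where $\gamma(t)$ lies on a ray $\rho$ of $\Sigma$, is the delicate one. On the $B$ side only the outgoing rays of $\hat{\fD}^{\can}$ supported on $\rho$ contribute to the bending formula, whereas on the $\bar{B}$ side both the outgoing ray (with Hamiltonian $\tilde\nu_\rho(\hat{G}_\rho)$) and the ingoing ray at $\bar\rho$ contribute. I would use the factorizations established in the proof of Lemma \ref{lem_isom_ch3}, namely
\[ \hat{f}_{\rho^{\out}}(X) = \hat{g}_\rho(X) \prod_{j,\, D_{m_j}=D_\rho} \bigl(1 + q^{-\frac{1}{2}} \hat{z}^{E_j} X^{-1}\bigr) \,, \]
together with $\hat{f}_{\bar\rho^{\out}} = \hat{g}_\rho$ and $\hat{f}_{\bar\rho^{\iin}}(\bar X) = \prod_j \bigl(1 + q^{-\frac{1}{2}} \hat{z}^{-E_j} \bar X\bigr)$ on the $\bar{B}$ side, combined with the rewriting $1 + q^{-\frac{1}{2}}\hat{z}^{E_j} X^{-1} = q^{-\frac{1}{2}}\hat{z}^{E_j} X^{-1}\bigl(1 + q^{\frac{1}{2}} \hat{z}^{-E_j} X\bigr)$, to produce a term-by-term bijection between the monomial contributions on the two sides. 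The monodromy shift implicit in the transition $\cP_{\sigma_-} \to \cP_{\sigma_+}$ across $\rho$ on $B$ is exactly absorbed by the prefactor $q^{-\frac{1}{2}}\hat{z}^{E_j} X^{-1}$ produced by this rewriting, which accounts for the split of the $\rho$-contribution into ingoing and outgoing pieces at $\bar\rho$. The reverse map sends a broken line $\bar\gamma$ back to $\nu^{-1}\circ\bar\gamma$ with monomials transported by $\tilde\nu_\sigma^{-1}$, and is manifestly inverse to the above construction.

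The main obstacle is the bookkeeping in Case B: one must verify the term-by-term identification of monomial contributions at a bending on $\rho$, not merely equality of the associated automorphisms (which is what Lemma \ref{lem_isom_ch3} already gives). Once the explicit factorizations from that lemma are expanded, however, the matching is a mechanical combinatorial check on $q$-series, and the finiteness clause of Lemma \ref{lem_finite} ensures that only finitely many such monomial contributions need to be compared modulo any given ideal $I$.
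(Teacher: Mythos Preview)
Your proposal is correct and follows essentially the same approach as the paper: both reduce to checking the bending condition at a ray $\rho$, invoke the factorizations of $\hat f_{\rho^{\out}}$, $\hat f_{\bar\rho^{\out}}$, $\hat f_{\bar\rho^{\iin}}$ from Lemma \ref{lem_isom_ch3}, and use the same algebraic rewriting of $(1+q^{-1/2}\hat z^{E_j}X^{-1})$ as a monomial prefactor times the ingoing factor to absorb the monodromy discrepancy between $\cP_{\sigma_-}$ and $\cP_{\sigma_+}$. The paper streamlines Case B by reducing WLOG to the incoming direction $s=\varphi_{\sigma_-}(m_{\rho_-})$ and then writes the identity with the precise $q$-shifts coming from Definition \ref{defn_broken_line_ch3}, which is exactly the ``mechanical combinatorial check'' you flag as remaining.
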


\begin{proof}
This is a quantum version of
\cite[Lemma 3.28]{MR3415066}.

It is enough to compare bending and attached 
monomials of broken lines near 
a one-dimensional cone $\rho$ of $\Sigma$.
Indeed, away from such $\rho$, $\nu$ is linear and so the claim is obvious.

Let $\rho$ be a one-dimensional cone of $\Sigma$.
Let $\sigma_+$ and $\sigma_-$ be the two-dimensional
cones of $\Sigma$ bounding $\rho$, and let 
$\rho_+$ and $\rho_-$ be the other boundary
one-dimensional cones of 
$\sigma_+$ and 
$\sigma_-$ respectively, such that 
$\rho_-$, $\rho$ and $\rho_+$ are in
anticlockwise order.
Let $m_\rho$ be the primitive generator of $\rho$ pointing away from the origin.
We continue to use the notation introduced in the proof of 
\mbox{Lemma
\ref{lem_isom_ch3}}.

Let $\gamma$ be a quantum broken line in $B_0$, passing from 
$\sigma_-$ to $\sigma_+$ across $\rho$.
Let $c \hat{z}^s$, $s \in 
P_{\varphi_{\sigma_-}}$, be the monomial
attached to the domain of linearity of 
$\gamma$ preceding the crossing with $\rho$. Without loss of generality, we can assume $s=\varphi_{\sigma_-}(m_{\rho_-})$. Indeed, the pairing 
$\langle -,- \rangle$ is trivial on $P$, 
$\mathfrak{r}(s)$ is a linear combination of 
$m_\rho$ and $m_{\rho_-}$, and $\hat{z}^{\varphi_{\sigma_-}(m_\rho)}$ transforms trivially across $\rho$.

By the definition 
of a quantum broken line
(Definition \ref{defn_broken_line_ch3}), we have to show
that 
\[
p_{\sigma_+} \left(
\hat{z}^{\varphi_{\sigma_-}(m_{\rho_-})} 
\hat{f}_{\rho^\out}
(q^{-1} X)
\right) =
\hat{z}^{\bar{\varphi}_{\sigma_-}(m_{\bar{\rho}_-})} 
\hat{f}_{\bar{\rho}^\out}
(q^{-1} \bar{X})
\hat{f}_{\bar{\rho}^\iin}
(\bar{X}) \,.
\]

From the relations
\[
\hat{z}^{\varphi_{\rho}(m_{\rho_+})} 
\hat{z}^{\varphi_{\rho}(m_{\rho_-})}
= q^{\frac{1}{2} D_\rho^2}
\hat{z}^{[D_\rho]} 
X^{-D_{\rho}^2}\]
in 
$\kk[P_{\varphi_\rho}]$,
\[
\hat{z}^{\bar{\varphi}_{\bar{\rho}}(m_{\rho_+})} 
\hat{z}^{\bar{\varphi}_{\bar{\rho}}(m_{\bar{\rho}_-})}
= q^{\frac{1}{2} D_{\bar{\rho}}^2}
\hat{z}^{[D_{\bar{\rho}}]} 
\bar{X}^{-D_{\bar{\rho}}^2} \]
in 
$\kk[P_{\bar{\varphi}_{\bar{\rho}}}]$,
and using
$D_\rho^2=D_{\overline{\rho}}^2-a_\rho$
and
$D_\rho=D_{\overline{\rho}} -\sum_{j,D_{m_j}=D_\rho} [E_j]$, we get 
\[ 
p_{\sigma_+}(\hat{z}^{\varphi_{\rho}(m_{\rho_-})})
=\hat{z}^{\bar{\varphi}_{\bar{\rho}}
(m_{\bar{\rho_-}})}
\prod_{j, D_{m_j}=D_\rho}
(q^{-\frac{1}{2}}\bar{X}
\hat{z}^{-[E_j]})\,.\]
The result follows from the identity
\[
q^{-\frac{1}{2}}\bar{X}
\hat{z}^{-[E_j]}
(1+q^{-\frac{1}{2}}
\hat{z}^{[E_j]} (q^{-1}
\bar{X})^{-1})
=1+q^{-\frac{1}{2}} \hat{z}^{-[E_j]}
\overline{X}\,.\]
\end{proof}

\begin{lem} \label{lem_lift_ch3}
Let $\sigma$ be a two-dimensional cone of 
$\Sigma$. For every $Q \in \sigma$
and every $p \in B_0(\Z)$,
we have 
\[ p_\sigma(\Lift_{Q}(p))
=\Lift_{\nu(Q)}(\nu(p))\,.\]
\end{lem}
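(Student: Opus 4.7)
The plan is to deduce Lemma \ref{lem_lift_ch3} almost directly from Lemma \ref{lem_bij_ch3}, which supplies the bijection $\gamma \leftrightarrow \nu(\gamma)$ between quantum broken lines of $\hat{\fD}^{\can}$ and of $\nu(\hat{\fD}^{\can})$. By definition of $\Lift$ we have
\[
\Lift_Q(p) = \sum_{\gamma} \Mono(\gamma), \qquad \Lift_{\nu(Q)}(\nu(p)) = \sum_{\bar{\gamma}} \Mono(\bar{\gamma}),
\]
where $\gamma$ ranges over quantum broken lines of charge $p$ with endpoint $Q$, and similarly for $\bar{\gamma}$. So it suffices to check two things: (a) the bijection of Lemma \ref{lem_bij_ch3} restricts to a bijection between broken lines with asymptotic charge $p$ ending at $Q$ and those with asymptotic charge $\nu(p)$ ending at $\nu(Q)$; (b) for each such $\gamma$, one has $p_\sigma(\Mono(\gamma)) = \Mono(\nu(\gamma))$.

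For (a), I would observe that $\nu$ is a homeomorphism $B \to \bar{B}$ which is linear on each two-dimensional cone of $\Sigma$, hence takes the endpoint $Q \in \sigma$ to $\nu(Q) \in \bar{\sigma}$; and since the asymptotic charge of a broken line is recorded by the monomial $\hat{z}^{\varphi_{\sigma'}(p)}$ on its unique unbounded domain of linearity (contained in a cone $\sigma' \ni p$), the isomorphism $\tilde\nu_{\sigma'}\colon P_{\varphi_{\sigma'}}\to P_{\bar\varphi_{\bar\sigma'}}$ sends this to $\hat{z}^{\bar\varphi_{\bar\sigma'}(\nu(p))}$, i.e.\ the asymptotic charge becomes $\nu(p)$.

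For (b), I would argue inductively along the broken line, starting from the unbounded end. On each domain of linearity contained in a single cone $\sigma'$ of $\Sigma$, the linear map $\nu|_{\sigma'}$ induces exactly the monoid isomorphism $\tilde\nu_{\sigma'}$, so the attached monomials are matched by $p_{\sigma'}$. At each bending point one must check compatibility with the change of local rings: if the bending occurs in the interior of a two-dimensional cone, the associated ray of $\hat{\fD}^{\can}$ is taken by $\tilde\nu_\sigma$ to a ray of $\nu(\hat{\fD}^{\can})$ with the same Hamiltonian, so the bending rule is preserved tautologically. If the bending occurs at a ray $\rho$ of $\Sigma$, the corresponding compatibility is precisely the computation carried out at the end of the proof of Lemma \ref{lem_bij_ch3}: the extra ingoing ray of $\nu(\hat{\fD}^{\can})$ at $\bar{\rho}$ compensates exactly for the failure of $\nu$ to be linear across $\rho$, yielding the identity
\[
p_{\sigma_+}\bigl(\hat{z}^{\varphi_{\sigma_-}(m_{\rho_-})} \hat{f}_{\rho^\out}(q^{-1}X)\bigr) = \hat{z}^{\bar\varphi_{\bar\sigma_-}(m_{\bar\rho_-})}\hat{f}_{\bar\rho^\out}(q^{-1}\bar X)\hat{f}_{\bar\rho^\iin}(\bar X).
\]

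Summing over the (finitely many, by Lemma \ref{lem_finite}) relevant broken lines modulo $I$ then gives $p_\sigma(\Lift_Q(p)) = \Lift_{\nu(Q)}(\nu(p))$. The only genuine content is the bending computation across a one-dimensional cone $\rho$, which is however already subsumed in the proof of Lemma \ref{lem_bij_ch3}; the present lemma is thus a straightforward bookkeeping consequence.
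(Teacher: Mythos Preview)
Your proposal is correct and takes essentially the same approach as the paper, which simply states that the lemma is a direct consequence of Lemma \ref{lem_bij_ch3}. Your write-up expands on the bookkeeping that this one-line deduction entails, but there is no difference in strategy.
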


\begin{proof}
This is a direct consequence of
Lemma \ref{lem_bij_ch3}.
\end{proof}

\subsection{Conclusion of the proof of Theorem \ref{thm_consistency}}
\label{section_end_proof_ch3}
This section is parallel to the end of the proof of Theorem 3.8 given at the end of  \cite[\S 3.3]{MR3415066}.
We have to show that 
$\hat{\fD}^{\can}$ satisfies the two conditions in the definition
of consistency of a quantum scattering diagram (Definition
\ref{defn_consistency_ch3}).
\begin{itemize}
\item Let $Q$ and $Q'$ be generic points in $B_0$ contained in a common two-dimensional cone $\sigma$ of $\Sigma$, and let $\gamma$ be a path in the interior of 
$\sigma$ connecting $Q$ and $Q'$,
and transversely intersecting the rays of 
$\hat{\fD}$. We have to show that
\[\Lift_{Q'}(p)=\hat{\theta}_{\gamma, \hat{\fD}^\can}(\Lift_Q(p))\,.\]
By 
Lemma
\ref{lem_isom_ch3} and 
Lemma \ref{lem_lift_ch3}, it is enough to show that 
\[\Lift_{\nu(Q')}(\nu(p))=\hat{\theta}_{\nu(\gamma), \nu(\hat{\fD}^\can)}(\Lift_{\nu(Q)}(\nu(p)))\,,\]
which follows from
the combination of  
\mbox{Proposition 
\ref{prop_comp_ch3}} and 
Proposition 
\ref{prop_cps_ch3}.
\item Let $Q_-$ and $Q_+$ be two generic points in $B_0$, contained respectively  
in two-dimensional cones $\sigma_-$
and $\sigma_+$ of 
$\Sigma$, such that $\sigma_+$ and $\sigma_-$ intersect along a one-dimensional cone $\rho$ of $\Sigma$. Assuming further that 
$Q_-$ and $Q_+$ are contained in connected components of 
$B - \Supp_I(\hat{\fD})$ whose closures contain $\rho$, we have to show that  
$\Lift_{Q_+}(p) \in R_{\sigma_+, I}^{\hbar}$
and 
$\Lift_{Q_-}(p) \in R_{\sigma_-, I}^{\hbar}$ are both images under 
$\hat{\psi}_{\rho,+}$ and
$\hat{\psi}_{\rho,-}$ respectively of a single element 
$\Lift_\rho(p) \in R_{\rho,I}^{\hbar}$.
By 
Lemma
\ref{lem_isom_ch3} and 
Lemma \ref{lem_lift_ch3}, it is enough to prove the corresponding statement after application of $\nu$. This result follows from the combination of the remark at the end of
\cref{subsection_gluing_ch3} and 
the second point of \mbox{Lemma \ref{lem_finite}}.
\end{itemize} 

\section{Conclusion of the proofs of the main results}
\label{section_ext}

In this section we finish the proof of Theorem \ref{thm_1_ch3}--
\ref{thm_q}.

We fix $(Y,D)$ a Looijenga pair.
Let 
$\sigma_P \subset A_1(Y,\R)$ be a strictly 
convex polyhedral cone containing 
$NE(Y)_\R$.
Let $P \coloneqq \sigma_P \cap A_1(Y,\Z)$ be the associated monoid and let
$R \coloneqq \kk[P]$ be the corresponding $\kk$-algebra. 
We denote by $\eta \colon NE(Y) \rightarrow P$
the inclusion of $NE(Y)$ in $P$.
For the maximal ideal monomial of 
$J=\fm_R$ of $R$, the assumptions of
Theorem \ref{thm_consistency}
are satisfied and so the canonical quantum scattering 
diagram $\hat{\fD}^\can$ constructed from
$(Y,D)$, $P$, $\eta$ and $J$ in 
\cref{section_canonical}
is consistent.

If $D$ has $r \geqslant 3$ irreducible components, then we can apply Theorem
\ref{thm_construction_ch3} to produce, for every 
ideal $I$ of $P$ of radical $J$, the 
$R_I^{\hbar}$-algebra $A_I^{\hbar}$,
deformation quantization of 
$X_{I,\fD^\can}$. 
In \cref{subsection_torus} we
lift the torus action on $X_{I,\fD^\can}$
constructed in \cite[\S 5]{MR3415066}
to a torus action on $A_I^{\hbar}$.
This finishes the proof of Theorem \ref{thm_1_ch3} if $r \geqslant 3$.
In 
\cref{subsection_end_proof_thm1}, we finish the proof of Theorem \ref{thm_1_ch3} in general, that is \ for any $r \geqslant 1$.

In \cref{section_quant_v1_v2} we give an explicit description of the deformation quantization of the special fiber of the mirror family for $r=1$ and $r=2$. We finish the proof of Theorem
\ref{thm_2_ch3} in 
\cref{subsection_end_proof_thm2}, 
and the proof of Theorem 
\ref{thm_q} in \cref{subsection_end_proof_thm_q}.

\subsection{Torus equivariance}
\label{subsection_torus}

Let $I$ be an ideal of $P$ of radical 
$J=\fm_R$.
Recall from 
\cref{subsection_ghk} that $T^D \coloneqq \mathbb{G}_m^r$ is the torus whose character group  
$\chi(T^D)$ has a basis 
$e_{D_j}$ indexed by the irreducible
components $D_j$ of $D$, 
$1 \leqslant j \leqslant r$. 
The map 
\[\beta \mapsto \sum_{j=1}^r (\beta \cdot D_j)
e_{D_j}\] 
induces an action of 
$T^D$ on $S_I=\Spec R_I$.

Following \cite[\S 5]{MR3415066}, we consider
\[ w \colon B \rightarrow \chi(T_D) \otimes \R \,,\]
the unique piecewise linear map 
such that $w(0)=0$ and $w(m_{\rho_j})=e_{D_j}$
for all $1 \leqslant j \leqslant r$, where 
$m_{\rho_j}$ 
is the primitive generator of the ray 
$\rho_j$, viewed as an element of 
$B_0(\Z)$.

We assume that $r \geqslant 3$, so that 
$A_I^{\hbar}$ is defined by Theorem \ref{thm_construction_ch3}.

According to \cite[Theorem 5.2]{MR3415066},
the $T^D$-action on $\Spec R_I$
has a natural lift to $X_{I,\fD^{\can}}$, 
such that the decomposition 
\[H^0(X_{I,\fD^{\can}},\cO_{X_{I,\fD^{\can}}})
=A_I=\bigoplus_{p \in B(\Z)} R_I
\vartheta_p\]
as $R_I$-module is a weight decomposition
where $T^D$ acts on 
$\vartheta_p$ with weight $w(p)$.

We extend the action of $T^D$ on $R_I$ by 
$\kk$-algebra automorphisms to an action of 
$T^D$ on $R_I^{\hbar}$ by $\kk_{\hbar}$-automorphism by assigning weight zero to $\hbar$.

\begin{prop}\label{prop_torus}
The $T^D$-action on $A_I$ by 
$\kk$-algebra automorphisms, equivariant for the structure of $R_I$-algebra, lifts to a 
$T^D$-action on $A_I^{\hbar}$
by $\kk_{\hbar}$-automorphisms, equivariant for the structure of $R_I^{\hbar}$-algebra. Furthermore, the decomposition 
 \[A_I^{\hbar}=\bigoplus_{p \in B(\Z)} R_I^{\hbar}
\hat{\vartheta}_p\]
as $R_I^{\hbar}$-module 
is a weight decomposition where $T^D$ acts on 
$\hat{\vartheta}_p$ with weight $w(p)$.
\end{prop}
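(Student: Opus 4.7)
The plan is to define the $T^D$-action on $A_I^{\hbar}$ by declaring each quantum theta function $\hat{\vartheta}_p$ to carry $T^D$-weight $w(p)$, exactly as in the classical case, and then to verify algebra-equivariance directly using Theorem \ref{thm_product}. Concretely, I would set up a $\kk_{\hbar}$-linear action of $T^D$ on $A_I^{\hbar}=\bigoplus_{p\in B(\Z)}R_I^{\hbar}\hat{\vartheta}_p$ by acting on coefficients via the given $T^D$-action on $R_I^{\hbar}$ (with $\hbar$ of weight zero) and on each $\hat{\vartheta}_p$ with weight $w(p)$. The weight decomposition claim of the proposition then holds by construction, and the only nontrivial content is that this assignment is compatible with multiplication.

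The key step is the weight computation of the structure constants. By Theorem \ref{thm_product},
\[C_{p_1,p_2}^p=\sum_{\gamma_1,\gamma_2}c(\gamma_1)c(\gamma_2)\,q^{\frac{1}{2}\langle s(\gamma_1),s(\gamma_2)\rangle},\]
summed over pairs of quantum broken lines of asymptotic charges $p_1,p_2$ ending at a point $z$ very close to $p$ and satisfying $s(\gamma_1)+s(\gamma_2)=p$. Since $q=e^{i\hbar}$ and $\hbar$ is declared to have weight zero, the $q$-twist is $T^D$-invariant. The coefficients $c(\gamma)$ are built inductively by the bending rule of Definition \ref{defn_broken_line_ch3} from the Hamiltonians $\hat{H}_\fd=(i/\hbar)\sum_{\beta\in NE(Y)_\fd}(\sum_g N_{g,\beta}^{Y/D}\hbar^{2g})\hat{z}^{\eta(\beta)-\varphi_{\tau_\fd}(\ell_\beta m_\fd)}$. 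All $\hbar$-dependence sits in the scalar coefficients $N_{g,\beta}^{Y/D}$, which are weight-zero, so the weight tracking of $c(\gamma)$ is literally the one carried out in the classical case in the proof of Theorem 5.2 of \cite{MR3415066}: by induction on the number of bendings, $c(\gamma)$ has $T^D$-weight $w(p_1)-w(s(\gamma))$.

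Combining the two broken-line contributions, each term in $C_{p_1,p_2}^p\hat{\vartheta}_p$ has total weight
\[\bigl(w(p_1)-w(s(\gamma_1))\bigr)+\bigl(w(p_2)-w(s(\gamma_2))\bigr)+w(p),\]
and since $z$ is very close to $p$ all of $s(\gamma_1),s(\gamma_2),p$ lie in the same two-dimensional cone $\sigma\in\Sigma$ on which $w$ is linear, so $s(\gamma_1)+s(\gamma_2)=p$ forces $w(s(\gamma_1))+w(s(\gamma_2))=w(p)$ and the total weight is $w(p_1)+w(p_2)$, as required for equivariance. The main step to be careful about is the inductive weight bookkeeping along broken lines, but since all quantum corrections to $\hat{\fD}^{\can}$ are weight-zero, the argument is a verbatim $q$-deformation of the classical one and no genuinely new obstacle arises.
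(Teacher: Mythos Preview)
Your argument is correct and rests on the same key observation as the paper's proof, namely that the monomials $\hat{z}^{\eta(\beta)-\varphi_{\tau_\fd}(\ell_\beta m_\fd)}$ appearing in the Hamiltonians of $\hat{\fD}^{\can}$ have $T^D$-weight zero (since $\beta\in NE(Y)_\fd$ forces $\sum_j(\beta.D_j)e_{D_j}=\ell_\beta w(m_\fd)$). From this, both arguments deduce that the weight of $\Mono(\gamma)$ is constant along a quantum broken line and equals $w(p_1)$ on the unbounded domain, which is exactly your inductive claim rephrased as $\text{weight}(c(\gamma))=w(p_1)-w(s(\gamma))$.

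The organization, however, differs. The paper defines the $T^D$-action at the sheaf level: it puts an equivariant structure on each $R_{\rho,I}^{\hbar}$ via the generators $X,X_+,X_-$, checks that the defining relations and the gluing automorphisms are weight-zero, and then reads off the weight of $\hat{\vartheta}_p$ from its local description. You instead declare the action directly on the $R_I^{\hbar}$-basis $\{\hat{\vartheta}_p\}$ and verify algebra-equivariance by computing the weight of each structure constant $C_{p_1,p_2}^p$ via Theorem \ref{thm_product}. Your route is more elementary and self-contained once the product formula is available, and it proves exactly what the proposition asserts. The paper's route yields a bit more, namely a $T^D$-action on the sheaf $\cO_{X^\circ_{I,\fD^{\can}}}^{\hbar}$ itself, of which the action on $A_I^{\hbar}=\Gamma(X^\circ_{I,\fD^{\can}},\cO_{X^\circ_{I,\fD^{\can}}}^{\hbar})$ is a consequence; this sheaf-level statement is not captured by your argument but is also not part of the proposition as stated.
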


\begin{proof}
This is a quantum deformation of the proof of \cite[Theorem 5.2]{MR3415066}.
As 
\[ A_I^{\hbar}
=\Gamma (X^\circ_{I,\fD^\can},
\cO_{X^\circ_{I,\fD^\can}}^{\hbar})\,,\] 
it is enough 
to define the $T^D$-action on 
$\cO_{X^\circ_{I,\fD^\can}}^{\hbar}$. 
 
Let $\rho$ be a one-dimensional cone of $\Sigma$.
Let $\sigma_+$ and $\sigma_-$ be the two-dimensional
cones of $\Sigma$ bounding $\rho$, and let 
$\rho_+$ and $\rho_-$ be the other boundary
one-dimensional cones of 
$\sigma_+$ and 
$\sigma_-$ respectively, such that 
$\rho_-$, $\rho$ and $\rho_+$ are in
anticlockwise order.
According to 
\cref{subsection_building_blocks},
$R_{\rho,I}^{\hbar}$ is generated 
as $R_I^{\hbar}$-algebra by variables 
$X_+$, $X_-$ and $X$.
We define an action of 
$T^D$ on $R_{\rho,I}^{\hbar}$
by lifting the action of
$T^D$ on $R_I^{\hbar}$ and by assigning
weight $e_{D_{\rho_+}}$ to $X_+$,
weight $e_{D_{\rho_-}}$ to $X_-$,
and weight 
$e_{D_\rho}$ to $X$.
We have to check that this action is well defined, that is, preserves the relations between $X_+$, $X_-$ and $X$ defining 
$R_{\rho,I}^{\hbar}$.

The relation 
$X X_+ = q X_+ X$
(respectively,
$X X_- = q^{-1} X_- X$)
is clearly $T^D$-invariant as both 
the left-hand side and the right-hand side have weight $e_{D_\rho}+e_{D_{\rho_+}}$
(respectively, $e_{D_\rho}+e_{D_{\rho_-}}$).

Let us consider the remaining relations:
\begin{enumerate}
\item \[ X_+ X_-=q^{\frac{1}{2} D_\rho^2}
\hat{z}^{[D_{\rho}]} \hat{f}_{\rho^\out}(q^{-1} X)
\hat{f}_{\rho^\iin}(X)
X^{-D_{\rho}^2} \,,\]
\item \[X_- X_+=q^{-\frac{1}{2} D_\rho^2}
\hat{z}^{[D_{\rho}]} \hat{f}_{\rho^\out}(X)
\hat{f}_{\rho^\iin}(qX)
X^{-D_{\rho}^2} \,.
\]
\end{enumerate}

For the canonical quantum scattering diagram $\hat{\fD}^\can$, defined in Definition 
\ref{def_canonical_scattering}, we have 
$\hat{f}_{\rho^\iin}(X)=1$, and
$\hat{f}_{\rho^\out}(X)$ is a power series of the form
\[ 
\sum_{\beta \in NE(Y)_\rho} c_\beta(\hbar)
\hat{z}^{\beta}X^{-\ell_\beta} \,,\]
for some $c_\beta(\hbar) \in \Q[\![\hbar]\!]$. According to the definition of $NE(Y)_\rho$
(see \cref{section_log_gw_ch3}), 
for $\beta \in NE(Y)_\rho$, we have
$\beta \cdot D_\rho=\ell_\beta$ and 
$\beta \cdot D_{\rho'}=0$ if 
$\rho' \neq \rho$. Thus, by the definition of the action of $T^D$ on $R_I^{\hbar}$,
$T^D$ acts on $\hat{z}^{\beta}$ with weight $\ell_\beta e_{D_\rho}$.
On the other hand, $X^{-\ell_\beta}$ has weight $-\ell_\beta e_{D_\rho}$.
It follows that $\hat{f}_{\rho^\out}(X)$
has weight $0$. 
On the other hand, by definition of the action of $T^D$ on $R_I^{\hbar}$, 
$\hat{z}^{[D_\rho]}$ has weight
\[ \sum_{j=1}^r (D_\rho \cdot D_j)
e_{D_j} = e_{D_{\rho_+}}+e_{D_{\rho_-}}
+D_\rho^2 e_{D_\rho}\,.\]
Thus, the above relations (i)
and (ii) are indeed 
$T^D$-invariant: 
the left-hand sides ($X_+ X_-$ or 
$X_-X_+$) have weight
\[ e_{D_{\rho_+}}+e_{D_{\rho_-}}\]
and the right-hand sides
($q^{\frac{1}{2} D_\rho^2}
\hat{z}^{[D_{\rho}]} \hat{f}_{\rho^\out}(q^{-1} X)
X^{-D_{\rho}^2}$ 
or
$q^{-\frac{1}{2} D_\rho^2}
\hat{z}^{[D_{\rho}]} \hat{f}_{\rho^\out}(X)
X^{-D_{\rho}^2}$)
have weight
\[ ( e_{D_{\rho_+}}+e_{D_{\rho_-}}
+D_\rho^2 e_{D_\rho})-D_\rho^2 e_{D_\rho}\,.\]

Having defined an action of 
$T^D$ on the 
$R_I^{\hbar}$-algebras 
$R_{\rho,I}^{\hbar}$,
in order to define an action of 
$T^D$ on 
$\cO_{X^\circ_{I,\fD^\can}}^{\hbar}$,
it remains to check that the gluing transformations 
$\hat{\theta}_{\gamma_\sigma}^{\hbar}$
of \cref{subsection_gluing_ch3}
are $T^D$-equivariant.
Let $\sigma$ be a two-dimensional cone of $\Sigma$, bounded by rays $\rho_L$ and $\rho_R$, such that 
$\rho_L,\sigma,\rho_R$ are in  
anticlockwise order. It follows from
Proposition \ref{prop_R}
and the fact that $\hat{f}_{\rho_L^\out}$ and $\hat{f}_{\rho_R^\out}$ have weight $0$, that the actions of $T^D$
on $R_{\rho_L,I}^{\hbar}$ and 
$R_{\rho_R,I}^{\hbar}$ restrict to the same action on $R_{\sigma,I}^{\hbar}$:
for $p \in P_{\varphi_\sigma}$,
one can uniquely write 
$p=\varphi_\sigma(\mathfrak{r}(p))+p'$
for some $p' \in P$, one can uniquely write 
$\mathfrak{r}(p)=n_R m_{\rho_R}+n_L m_{\rho_L}$ with 
$m_L, m_R \in \Z$, and then $T^D$ acts on 
$\hat{z}^p$ with weight
$n_R e_{D_{\rho_R}}+n_L e_{D_{\rho_L}}
+\sum_{j=1}^r (p' \cdot D_j) e_{D_j}$.

For every ray $\fd$
of $\hat{\fD}^{\can}$ contained in $\sigma$, the monomials appearing 
in $\hat{H}_\fd$ and so in $\hat{f}_\fd$
are of the form 
$\hat{z}^{\beta -\varphi_{\sigma}(\ell_\beta 
m_\fd)}$ with 
$\beta \in NE(Y)_\fd$, see Definition 
\ref{def_canonical_scattering}.
Writing $m_\fd=n_R m_{\rho_R}+n_L m_{\rho_L}$, we get that $T^D$ acts on 
$\hat{z}^{\beta -\varphi_{\sigma}(\ell_\beta 
m_\fd)}$ with weight
\[ -\ell_\beta n_R e_{D_{\rho_R}}-
\ell_\beta n_L e_{D_{\rho_L}}
+\sum_{j=1}^r (\beta \cdot D_j) e_{D_j}\,.\]
But by definition of $NE(Y)_\fd$
(see \cref{section_log_gw_ch3}), the condition $\beta \in NE(Y)_\fd$ means that 
$\beta\cdot D_{\rho_R}=\ell_\beta n_R$, 
$\beta \cdot D_{\rho_L}=\ell_\beta n_L$,
and $\beta \cdot D_{\rho'}=0$ if $\rho' \neq \rho_R$ and $\rho' \neq \rho_L$.
Thus, all the monomials $\hat{z}^{\beta -\varphi_{\sigma}(\ell_\beta 
m_\fd)}$ have weight zero.
It follows that the gluing transformations 
$\hat{\theta}_{\gamma_\sigma}^{\hbar}$
are $T^D$-equivariant.

The check that $\hat{\vartheta}_p$ is an eigenfunction of the $T^D$-action with weight $w(p)$ is now formally identical to the corresponding classical 
check given in the proof of \cite[Theorem 5.2]{MR3415066}. As the scattering automorphisms 
have weight $0$, the weights of the monomials 
on the various domains of linearity of 
a broken line are identical and so it is enough to consider the unbounded domain of linearity.
In this case, the monomial is 
$\hat{z}^{\varphi_{\tau_p}(p)}$, 
which has weight $w(p)$. 
\end{proof}

\subsection{Conclusion of the proof of Theorem \ref{thm_1_ch3}}
\label{subsection_end_proof_thm1}

We fix $(Y,D)$ a Looijenga pair.
Let 
$\sigma_P \subset A_1(Y,\R)$ be a strictly 
convex polyhedral cone containing 
$NE(Y)_\R$.
Let $P \coloneqq \sigma_P \cap A_1(Y,\Z)$ be the associated monoid and let
$R \coloneqq \kk[P]$ be the corresponding $\kk$-algebra. For 
$J=\fm_R$ the maximal ideal monomial of $R$, the assumptions of
Theorem \ref{thm_consistency}
are satisfied and so the canonical quantum scattering 
diagram $\hat{\fD}^\can$ is consistent.

If $(Y,D)$ admits a toric model, then $D$ has $r \geqslant 3$ irreducible components,
and so we can apply Theorem
\ref{thm_construction_ch3}. Combined with \mbox{Proposition
\ref{prop_torus}},
this
proves Theorem \ref{thm_1_ch3} in this case.

In general, it is proven in \cite[\S 6.2]{MR3415066}
that 
\[H^0(X_I,\cO_{X_I})
=A_I \coloneqq \bigoplus_{p \in B(\Z)}
R_I \theta_p \,,\]
with the $R_I$-algebra structure determined by the classical version of the product formula given in Theorem
\ref{thm_product}. So Theorem
\ref{thm_1_ch3} follows from the following result.

\begin{prop} \label{prop_algebra}
For every monomial ideal $I$ of $R$
of radical $\fm_R$, the 
multiplication rule of 
Theorem \ref{thm_product}
defines a structure of 
$R_I^{\hbar}$-algebra on 
the $R_I^{\hbar}$-module
\[A_I^{\hbar} \coloneqq 
\bigoplus_{p \in B(\Z)}
R_I^{\hbar} \hat{\vartheta}_p \,.\]
\end{prop}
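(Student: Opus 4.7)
The plan is to reduce the assertion to Theorem \ref{thm_construction_ch3}, which already furnishes an associative algebra structure when $r \geq 3$. The multiplication rule of Theorem \ref{thm_product} is manifestly $R_I^{\hbar}$-bilinear, and each structure constant $C_{p_1,p_2}^p$ is a well-defined element of $R_I^{\hbar}$ by Lemma \ref{lem_finite}: for fixed $p_1, p_2, p$ only finitely many pairs of quantum broken lines of asymptotic charges $p_1, p_2$ ending near $p$ give contributions non-zero modulo $I$. Since the only quantum broken line of asymptotic charge $0$ is the trivial constant line, $\hat{\vartheta}_0 = 1$ is a two-sided unit. The only non-trivial assertion is thus associativity.

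To prove associativity, I would pass to a corner blow-up $\pi \colon (\tilde{Y}, \tilde{D}) \to (Y, D)$ with $\tilde{r} \geq 3$, which exists because each corner blow-up increases the number of boundary components by one without changing the interior. Such a blow-up leaves the underlying integral affine manifold $B$ unchanged, merely refining $\Sigma$ to $\tilde{\Sigma}$; in particular $\tilde{B}(\Z) = B(\Z)$. Choose a strictly convex polyhedral cone $\sigma_{\tilde{P}} \subset A_1(\tilde{Y}, \R)$ containing both $NE(\tilde{Y})_\R$ and $\pi^{*} \sigma_P$, and set $\tilde{P} \coloneqq \sigma_{\tilde{P}} \cap A_1(\tilde{Y}, \Z)$; pullback of curve classes gives an inclusion of monoids $P \hookrightarrow \tilde{P}$, and for a suitable lift $\tilde{I}$ of $I$ with radical $\fm_{\tilde{R}}$, a ring inclusion $R_I^{\hbar} \hookrightarrow \tilde{R}_{\tilde{I}}^{\hbar}$.

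By the quantum analogue of Step I in Section \ref{section_canonical} --- the fact that $\hat{\fD}^{\can}$ transforms trivially under corner blow-ups --- the quantum broken lines on $B$ and $\tilde{B}$ relevant to the multiplication of quantum theta functions are in bijection, and their attached monomials are identified under $R_I^{\hbar} \hookrightarrow \tilde{R}_{\tilde{I}}^{\hbar}$. Consequently, the structure constants $C_{p_1,p_2}^p$ computed from $(Y,D)$ coincide with those computed from $(\tilde{Y}, \tilde{D})$. Theorem \ref{thm_construction_ch3} applied to $(\tilde{Y}, \tilde{D})$, which satisfies $\tilde{r} \geq 3$, endows $\tilde{A}_{\tilde{I}}^{\hbar}$ with an associative $\tilde{R}_{\tilde{I}}^{\hbar}$-algebra structure having exactly these structure constants; associativity then descends to the $R_I^{\hbar}$-submodule $A_I^{\hbar} \subset \tilde{A}_{\tilde{I}}^{\hbar}$ spanned by the $\hat{\vartheta}_p$, $p \in B(\Z)$. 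The main technical obstacle is the careful verification of the scattering diagram identification under corner blow-up in the quantum setting, which requires tracking how the higher genus log Gromov-Witten contributions to the Hamiltonians $\hat{H}_\fd$ transform under $\pi$ --- ultimately a consequence of the birational invariance of log Gromov-Witten theory \cite{abramovich2013invariance}.
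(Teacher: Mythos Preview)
Your proposal is correct and follows essentially the same route as the paper: reduce to the case $r \geqslant 3$ via a corner blow-up, where Theorem \ref{thm_construction_ch3} already provides the associative algebra structure with the structure constants of Theorem \ref{thm_product}, and then descend. The paper is terser, deferring the comparison of broken lines and structure constants under corner blow-up to Section 6.2 of \cite{MR3415066} (and to Step I of Section \ref{section_canonical} for the scattering diagram), whereas you spell out the mechanism explicitly; but the argument is the same.
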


\begin{proof}
If $(Y,D)$ admits a toric model, then $D$ has $r \geqslant 3$ components and so the result follows from Theorem
\ref{thm_product}.

In general, there is a corner blow-up 
$(Y',D')$ of $(Y,D)$ admitting a toric model. The result for $(Y',D')$ implies the result for $(Y,D)$ as in
\cite[\S 6.2]{MR3415066}.
 
\end{proof}

\subsection{Quantization of $\V_1$ and $\V_2$}
\label{section_quant_v1_v2}

By Poposition \ref{prop_algebra}, for
every monomial ideal $I$ of $R$
of radical $\fm_R$, we have a structure of 
$R_I^{\hbar}$-algebra on 
\[ A_I^{\hbar}=\bigoplus_{p\in B(\Z)}
R_I^{\hbar} \hat{\vartheta}_p \,.\]
In this section we explicitly describe this algebra for $I=\fm_R$.

In the classical limit $\hbar=0$, we get a commutative $R_I$-algebra which, by 
\cite{MR3415066} is the algebra of functions on the variety $\V_r$, where $r$ is the number of irreducible components of $D$, and 
\begin{itemize}
\item if $r \geqslant 3$, $\V_r$ is the $r$-cycle of coordinates planes in the affine space $\A^r$, 
$\V_r
=\A^2_{x_1, x_2} \cup \A_{x_2,x_3}^2 \cup
\dots \cup \A^2_{x_r,x_1}
\subset \A^r_{x_1,\dots,x_r}$.
\item if $r=2$, $\V_2$ is a union of two affine planes\footnote{In \cite{MR3415066}, the description
$\V_2
=\Spec \kk[u,v,w]/(w^2-u^2 v^2)$
is given. This is equivalent to our description via the change of 
variables $x=\sqrt{2}u$,
$y=\sqrt{2}v$,
$z=w+uv$\,.},
\[\V_2=\Spec \kk[x,y,z]/(xyz-z^2)\,,\]
the affine cone over the union of the two rational curves $z=0$ and $xy-z=0$, intersecting in two points, embedded 
in the weighted projective plane 
$\PP^{1,1,2}$.
\item if $r=1$, $\V_1= \Spec 
\kk[x,y,z]/(xyz-x^2-z^3)$, the affine cone over a nodal curve embedded in the 
weighted projective plane 
$\PP^{(3,1,2)}_{x,y,z}$.
\end{itemize}

When $r \geqslant 3$, the explicit description of  $A_{\fm_R}^{\hbar}$ follows from the combination of \cref{gluing_I=J} and the beginning of the 
proof of Theorem \ref{thm_construction_ch3}: we have 
\[\hat{\vartheta}_m
\cdot \hat{\vartheta}_{m'} =
\begin{dcases}
q^{\frac{1}{2} \langle m,m'\rangle}
\hat{\vartheta}_{m+m'} & \text{if $m$ and $m'$ lie in a common cone of $\Sigma$} \\
0 & \text{otherwise.} 
\end{dcases}\] 
In particular, denoting $v_1, \dots, v_r$ the primitive generators of the one-dimensional cones $\rho_1, \dots, \rho_r$ 
of $\Sigma$, $A_{\fm_R}^{\hbar}$ is generated as $\kk_{\hbar}$-algebra by 
$\hat{\vartheta}_{v_1}, \dots, \hat{\vartheta}_{v_r}$.

For $r=2$ and $r=1$, 
computing $A_{\fm_R}^{\hbar}$ is slightly more subtle and the answer is given below in 
Propositions 
\ref{prop_v2} and 
\ref{prop_v1}.
 
Both $\V_1$ and $\V_2$ are hypersurfaces in 
$\A^3_{x,y,z}$.
Evey hypersurface $F(x,y,z)=0$ in
$\A^3_{x,y,z}$ has a natural Poisson structure defined by
\[\{x,y\}=\frac{\partial F}{\partial z} \,,
\{y,z\}=\frac{\partial F}{\partial x} \,,
\{z,x\}=\frac{\partial F}{\partial y} \,,\]
see \cite{MR2734346} for example.

For $\V_2$ and $F(x,y,z)
=z^2-xyz$, we get 
\[\{x,y\}=2z-xy \,, \{y,z\}=-yz \,,\{z,x\}
=-zx \,.\]
It follows from 
$\{y,z\}=-yz$ and $\{z,x\}
=-zx $ that this bracket coincides with the one coming from the standard symplectic form on the two natural copies of 
$(\G_m)^2$ contained in $\V_2$.

For $\V_1$ and $F(x,y,z)
=z^3+x^2-xyz$, we get 
\[\{x,y\}=3z^2-xy \,,
\{y,z\}=2x-yz \,,
\{z,x\}=-zx\,.\]
It follows from
$\{x,z\}=xz$ that the above Poisson structure is 
indeed the one induced by the standard symplectic
form on the
natural copy of 
$(\G_m)^2$ contained in $\V_1$.

We first explain how to recover the 
above Poisson brackets from the formula given 
by Corollary \ref{cor_poisson} in terms of classical
broken lines. We then use the formula of Theorem \ref{thm_product} in terms of quantum broken lines to compute the $q$-commutators deforming these Poisson brackets.

For $\V_2$, the tropicalization $B$
contains two two-dimensional cones 
$\sigma_1$, and $\sigma_2$, and two one-dimensional cones $\rho_1$ and $\rho_2$.
Let $v_1$ and $v_2$ in $B(\Z)$ be the primitive generators of $\rho_1$ and $\rho_2$. Cutting $B$ along $\rho_1$, we can identify $B$ with the union of two cones 
in $\R^2$. More precisely, we can find $w$, $v_2$, $w' \in \Z^2$
such that $\langle w, v_2 \rangle = \langle v_2, w' \rangle =1$, and such that $B$
can be viewed as the union of the two cones
$\R_{\geqslant 0}w+\R_{\geqslant 0}v_2$ and 
$\R_{\geqslant 0}v_2+\R_{\geqslant 0}w'$ with some identification of $\R_{\geqslant 0} w$ and $\R_{\geqslant 0}w'$ identifying $w$ and $w'$.
We have $x=\vartheta_{v_1}=\vartheta_w
=\vartheta_{w'}$, $y=\vartheta_{v_2}$, 
$z=\vartheta_{w+v_2}$.
The broken lines description of the product gives
\[xy=\vartheta_{v_1} \vartheta_{v_2}
=\vartheta_{w+v_2}+\vartheta_{w'+v_2} \,,\]
and 
\[\vartheta_{w+v_2}
\vartheta_{w'+v_2}
=0\,,\]
so 
$\vartheta_{w'+v_2}=xy-z$ and 
$(xy-z)z=0$, which is indeed the equation defining $\V_2$.
We have 
\[\{x,y\}=
\{\vartheta_{v_1},\vartheta_{v_2}\}
=\langle
w,v_2
\rangle 
\vartheta_{w+v_2}
+
\langle
w',
v_2
\rangle \vartheta_{w'+v_2}
=\vartheta_{w+v_2}-\vartheta_{w'+v_2}\,\]
Using $\vartheta_{w'+v_2}=xy-z$, we get 
$\{x,y\}=2z-xy$.
We have 
\[ \{y,z\}=
\{\vartheta_{v_2},
\vartheta_{w+v_2}\}
=\langle
v_2,w+v_2
\rangle 
\vartheta_{w+2v_2}
=-\vartheta_{v_2}
\vartheta_{w+v_2}
=-yz \,.\]
Finally, we have 
\[\{z,x\}
=
\langle
w+v_2,
w
\rangle
\vartheta_{2w+v_2}
=-\vartheta_{w}\vartheta_{w+v_2}
=-zx\,.\]
Using the formula of Theorem \ref{thm_product}, we compute the $q$-commutators deforming the above Poisson brackets.
We have 
\[\hat{x}\hat{y}=
\hat{\vartheta}_{v_1} 
\hat{\vartheta}_{v_2}
=q^{\frac{1}{2}}
\hat{\vartheta}_{w+v_2}
+q^{-\frac{1}{2}}
\hat{\vartheta}_{w'+v_2} \,,\]
so $\hat{\vartheta}_{w'+v_2}
=q^{\frac{1}{2}} \hat{x}\hat{y}-q \hat{z}^2$.
On the other hand, we have
\[\hat{y}\hat{x}=
\hat{\vartheta}_{v_2} 
\hat{\vartheta}_{v_1}
=q^{-\frac{1}{2}}
\hat{\vartheta}_{w'+v_2}
+q^{\frac{1}{2}}
\hat{\vartheta}_{w'+v_2} \,,\]
\[q^{-\frac{1}{2}}\hat{y}\hat{x}
=q^{-1}
\hat{\vartheta}_{w'+v_2}
+
\hat{\vartheta}_{w'+v_2} \,,\]
and so 
\[q^{\frac{1}{2}}
\hat{x}\hat{y}
-q^{-\frac{1}{2}}
\hat{y}\hat{x}=(q-q^{-1})
\hat{z}^2\,.\]
We have 
\[\hat{y}\hat{z}=\hat{\vartheta}_{v_2} 
\hat{\vartheta}_{w+v_2}=
q^{-\frac{1}{2}}
\hat{\vartheta}_{w+2v_2}\,,\]
and
\[\hat{z}\hat{y}=\hat{\vartheta}_{w+v_2}
\hat{\vartheta}_{v_2}
=
q^{\frac{1}{2}}
\hat{\vartheta}_{w+2v_2}\,,\]
so
\[q^{\frac{1}{2}}
\hat{y}\hat{z}-q^{-\frac{1}{2}}\hat{z}\hat{y}
=0\,.\]
We have 
\[\hat{z}\hat{x}
=\hat{\vartheta}_{w+v_2}
\hat{\vartheta}_{w}
=q^{-\frac{1}{2}}
\hat{\vartheta}_{2w+v_2}\,\]
and 
\[\hat{x}\hat{z}=\hat{\vartheta}_{w}
\hat{\vartheta}_{w+v_2}=q^{\frac{1}{2}}\hat{\vartheta}_{2w+v_2}\,,\] 
so
\[q^{\frac{1}{2}}
\hat{z}\hat{x}-q^{-\frac{1}{2}}\hat{x}\hat{z}
=0 \,.\]
Finally, we compute the $q$-deformation of the cubic relation $F=0$: 
\[\hat{x}\hat{y}\hat{z}
=\hat{\vartheta}_{w}
q^{-\frac{1}{2}}
\hat{\vartheta}_{w+2v_2}
=q^{\frac{1}{2}}
\hat{z}^2\,.\]

In summary, we have proved the following proposition.

\begin{prop} \label{prop_v2}
The deformation quantization of 
$\V_2$ given by the product formula of Theorem \ref{thm_product} is the associative $\kk_{\hbar}$-algebra
generated by variables $\hat{x},\hat{y},\hat{z}$
and with relations 
\[q^{\frac{1}{2}}
\hat{x}\hat{y}-q^{-\frac{1}{2}}\hat{y}\hat{x}
=(q-q^{-1})\hat{z}\,,\]
\[q^{\frac{1}{2}}
\hat{y}\hat{z}-q^{-\frac{1}{2}}\hat{z}\hat{y}
=0 \,,\]
\[q^{\frac{1}{2}}
\hat{z}\hat{x}-q^{-\frac{1}{2}}\hat{x}\hat{z}
=0 \,,\]
\[\hat{x}\hat{y}\hat{z}
=q^{\frac{1}{2}}
\hat{z}^2
\,.\]
\end{prop}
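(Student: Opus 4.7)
The plan is to apply Theorem \ref{thm_product} directly in the case $r=2$, reading off generators and relations from explicit quantum broken line computations. The starting observation is that for $I = \fm_R$, all kinks $\kappa_{\rho, \varphi}$ lie in $\fm_R$, so exactly as in the proof of Theorem \ref{thm_construction_ch3}, the only broken lines contributing to a given $\mathrm{Lift}_Q(p)$ are straight lines. Consequently the product formula reduces to the pure monomial rule $\hat{\vartheta}_m \cdot \hat{\vartheta}_{m'} = q^{\frac{1}{2}\langle m, m'\rangle} \hat{\vartheta}_{m+m'}$ whenever $m$ and $m'$ lie in a common two-dimensional cone, with the only non-trivial behavior coming from pairs whose straight broken lines must traverse the origin.

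First I would fix tropical coordinates: cut $B$ along $\rho_1$ to realize it as the upper half plane with its two horizontal boundary rays identified, write $w = (1,0)$, $w' = (-1,0)$ for the two representatives of $v_1$ after cutting, and $v_2 = (0,1)$. Set $\hat{x} = \hat{\vartheta}_{v_1}$, $\hat{y} = \hat{\vartheta}_{v_2}$, $\hat{z} = \hat{\vartheta}_{w+v_2}$. The decisive computation is the product $\hat{x}\hat{y} = \hat{\vartheta}_{v_1}\hat{\vartheta}_{v_2}$: since the straight segments from $v_1$ to $v_2$ pass through either $\sigma_1$ or $\sigma_2$, one obtains the two contributions $q^{1/2}\hat{\vartheta}_{w+v_2} + q^{-1/2}\hat{\vartheta}_{w'+v_2}$, which rearranges (together with $\hat{y}\hat{x}$, computed symmetrically) into the $q$-commutation relation $q^{1/2}\hat{x}\hat{y} - q^{-1/2}\hat{y}\hat{x} = (q - q^{-1})\hat{z}^2$ once $\hat{\vartheta}_{w'+v_2}$ is eliminated.

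Next I would check the remaining relations: since $v_2$ and $w+v_2$ lie in a common cone, the products $\hat{y}\hat{z}$ and $\hat{z}\hat{y}$ each equal $q^{\mp 1/2}\hat{\vartheta}_{w+2v_2}$, giving $q^{1/2}\hat{y}\hat{z} - q^{-1/2}\hat{z}\hat{y} = 0$; similarly for $\hat{z}\hat{x}$ since $w$ and $w+v_2$ share a cone. The cubic relation then follows from $\hat{x}\hat{y}\hat{z} = \hat{\vartheta}_w \cdot q^{-1/2}\hat{\vartheta}_{w+2v_2} = q^{1/2}\hat{\vartheta}_{2w+2v_2} = q^{1/2}\hat{z}^2$, using the common-cone rule twice.

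Finally I would verify that the presentation is complete, i.e.\ no further relations hold. By Proposition \ref{prop_algebra} (with Proposition \ref{prop_torus}), $A^{\hbar}_{\fm_R}$ is free as $\kk_{\hbar}$-module on $\{\hat{\vartheta}_p\}_{p \in B(\Z)}$ with $T^D$-weight $w(p)$, and the proposed presentation has matching Hilbert series once one shows that monomials in $\hat{x},\hat{y},\hat{z}$ span: the $q$-commutators put monomials into the normal form $\hat{x}^a\hat{z}^b\hat{y}^c$, and the cubic relation reduces any monomial containing all three generators to one in $\hat{x},\hat{z}$ alone, matching precisely the two-cone lattice point count. The only real subtlety is bookkeeping the signs in $\langle -, -\rangle$ across the singularity at $0 \in B$; everything else is straightforward substitution.
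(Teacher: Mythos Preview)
Your approach is essentially identical to the paper's: the same coordinate cut of $B$, the same identifications $\hat{x}=\hat{\vartheta}_{v_1}$, $\hat{y}=\hat{\vartheta}_{v_2}$, $\hat{z}=\hat{\vartheta}_{w+v_2}$, and the same broken-line computations of the products $\hat{x}\hat{y}$, $\hat{y}\hat{x}$, $\hat{y}\hat{z}$, $\hat{z}\hat{y}$, $\hat{z}\hat{x}$, $\hat{x}\hat{z}$, $\hat{x}\hat{y}\hat{z}$. One slip: your first $q$-commutator should read $(q-q^{-1})\hat{z}$, not $(q-q^{-1})\hat{z}^2$, since $\hat{\vartheta}_{w+v_2}=\hat{z}$; your final paragraph on completeness of the presentation via the $R_{\fm_R}^{\hbar}$-basis $\{\hat{\vartheta}_p\}$ is a welcome addition that the paper leaves implicit.
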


For $\V_1$, the tropicalization $B$
contains one two-dimensional cone $\sigma$ and one one-dimensional cone $\rho$.
Let $v$ in $B(\Z)$ be the primitive generator of $\rho$. Cutting $B$ along $\rho$, we can identify $B$ as a quadrant in $\R^2$ with an identification of the two boundary rays. Denote $w=(1,0)$ and 
$w'=(0,1)$. The description of the product of classical theta functions
by broken lines is given in 
\cite[\S 6.2]{MR3415066}. 
We have $x=\vartheta_{2w+w'}$, 
$y=\vartheta_v=\vartheta_w=\vartheta_{w'}$, 
$z=\vartheta_{w+w'}$.
We have
\[\{x,y\}=\{\vartheta_{2w+w'},\vartheta_v\}
=\langle (2,1),(1,0)\rangle \vartheta_{3w+w'}
+\langle (2,1),(0,1)\rangle \vartheta_{2w+2w'}\]
\[=-\vartheta_{3w+w'} +2 \vartheta_{2w+2w'} \,.\]
On the other hand, we have 
$xy=\vartheta_{3w+w'}+\vartheta_{2w+2w'}$ and 
$z^2=\vartheta_{2w+2w'}$, and so 
$\{x,y\}=3z^2-xy$.
We have 
\[\{y,z\}=\{\vartheta_v,\vartheta_{w+w'}\}
=\langle (1,0),(1,1) \rangle 
\vartheta_{2w+w'} 
+\langle (0,1),(1,1)\rangle 
\vartheta_{w+2w'}\]
\[= \vartheta_{2w+w'}-\vartheta_{w+2w'}\,.\]
On the other hand, we have 
$yz=\vartheta_{2w+w'}+\vartheta_{w+2w'}$ and 
$x=\vartheta_{2w+w'}$, and so 
$\{y,z\}=2x-yz$.
We have 
\[\{z,x\}=\{\vartheta_{w+w'},\vartheta_{2w+w'}\}
=\langle (1,1),(2,1) \rangle \vartheta_{3w+2w'}
=- \vartheta_{3w+2w'}\,.\]
On the other hand, we have
$zx=\vartheta_{3w+2w'}$ and so
$\{z,x\}=-zx$.  

Using the formula of Theorem \ref{thm_product}, we compute the $q$-commutators deforming the above Poisson brackets.
We have 
\[\hat{x}\hat{y}
=\hat{\vartheta}_{2w+w'}
\hat{\vartheta}_v
=q^{-\frac{1}{2}} \hat{\vartheta}_{3w+w'}
+q \hat{\vartheta}_{2w+2w'}\,,\]
so
\[\hat{\vartheta}_{3w+w'}=q^{\frac{1}{2}}
\hat{x}\hat{y}-q^{\frac{3}{2}} \hat{z}^2\,.\]
On the other hand, we have 
\[\hat{y}\hat{x}=
\hat{\vartheta}_v \hat{\vartheta}_{2w+w'}
=q^{\frac{1}{2}}
\hat{\vartheta}_{3w+w'}+q^{-1} \hat{\vartheta}_{2w+2w'} \,,\]
\[q^{-\frac{1}{2}}\hat{y}\hat{x}=
\hat{\vartheta}_{3w+w'}+q^{-\frac{3}{2}} \hat{z}^2\,,\]
and so 
\[q^{\frac{1}{2}}
\hat{x}\hat{y}-q^{-\frac{1}{2}}\hat{y}\hat{x}
=(q^{\frac{3}{2}}-q^{-\frac{3}{2}})
\hat{z}^2 \,.\]
We have 
\[\hat{y}\hat{z}=\hat{\vartheta}_v 
\hat{\vartheta}_{w+w'}=q^{\frac{1}{2}}
\hat{\vartheta}_{2w+w'}
+q^{-\frac{1}{2}}
\hat{\vartheta}_{w+2w'}\,,\]
so
\[\hat{\vartheta}_{w+2w'}
=q^{\frac{1}{2}}
\hat{y}\hat{z}
-q \hat{x}\,.\]
On the other hand, we have 
\[\hat{z}\hat{y}=\hat{\vartheta}_{w+w'}
\hat{\vartheta}_v
=q^{-\frac{1}{2}}
\hat{\vartheta}_{2w+w'}
+q^{\frac{1}{2}}
\hat{\vartheta}_{w+2w'}\,,\]
\[q^{-\frac{1}{2}}\hat{z}\hat{y}
=
q^{-1}\hat{x}
+
\hat{\vartheta}_{w+2w'}\,,\]
and so
\[q^{\frac{1}{2}}
\hat{y}\hat{z}-q^{-\frac{1}{2}}\hat{z}\hat{y}
=(q-q^{-1})
\hat{x} \,.\]
We have 
\[\hat{z}\hat{x}
=\hat{\vartheta}_{w+w'}
\hat{\vartheta}_{2w+w'}
=q^{-\frac{1}{2}}
\hat{\vartheta}_{3w+2w'}\,\]
\[\hat{\vartheta}_{3w+2w'}
=q^{\frac{1}{2}}\hat{z}\hat{x}\,.\]
On the other hand, we have 
\[\hat{x}\hat{z}=\hat{\vartheta}_{2w+w'}
\hat{\vartheta}_{w+w'}=q^{\frac{1}{2}}\hat{\vartheta}_{3w+2w'}\,,\]
and so
\[q^{\frac{1}{2}}
\hat{z}\hat{x}-q^{-\frac{1}{2}}\hat{x}\hat{z}
=0 \,.\]

Finally, we compute the $q$-deformation of the cubic relation $F=0$:
\[\hat{x}\hat{y}\hat{z}
=\hat{\vartheta}_{2w+w'}(q^{\frac{1}{2}}
\hat{\vartheta}_{2w+w'}
+q^{-\frac{1}{2}}
\hat{\vartheta}_{w+2w'})
=q^{\frac{1}{2}}
\hat{\vartheta}_{2w+w'}^2
+q^{-\frac{1}{2}}
q^{\frac{3}{2}}
\hat{\vartheta}_{3w+3w'}\,,\]
\[\hat{x}\hat{y}\hat{z}
=q^{\frac{1}{2}}
\hat{x}^2
+
q
\hat{z}^3\,.\]

In summary, we have proved the following proposition.

\begin{prop} \label{prop_v1}
The deformation quantization of 
$\V_1$ given by the product formula of Theorem \ref{thm_product} is the associative $\kk_{\hbar}$-algebra
generated by variables $\hat{x},\hat{y},\hat{z}$
and with relations 
\[q^{\frac{1}{2}}
\hat{x}\hat{y}-q^{-\frac{1}{2}}\hat{y}\hat{x}
=(q^{\frac{3}{2}}-q^{-\frac{3}{2}})
\hat{z}^2 \,,\]
\[q^{\frac{1}{2}}
\hat{y}\hat{z}-q^{-\frac{1}{2}}\hat{z}\hat{y}
=(q-q^{-1})
\hat{x} \,,\]
\[q^{\frac{1}{2}}
\hat{z}\hat{x}-q^{-\frac{1}{2}}\hat{x}\hat{z}
=0 \,,\]
\[\hat{x}\hat{y}\hat{z}
=q^{\frac{1}{2}}
\hat{x}^2
+
q
\hat{z}^3\,.\]
\end{prop}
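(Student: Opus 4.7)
The plan is to read off the algebra structure on $A_{\fm_R}^{\hbar}$ for the case $r=1$ directly from the quantum broken line formula of Theorem \ref{thm_product}, imitating the classical computation that recovers the cubic equation $xyz = x^2 + z^3$ for $\V_1$. First I would fix the standard description of the tropicalization $B$ of $(Y,D)$ for $r=1$: cut along the unique ray $\rho$ to identify $B$ with a quadrant in $\R^2$ with the two boundary rays glued, take $w=(1,0)$, $w'=(0,1)$, and set
\[
\hat{x} = \hat{\vartheta}_{2w+w'}, \quad \hat{y} = \hat{\vartheta}_v = \hat{\vartheta}_w = \hat{\vartheta}_{w'}, \quad \hat{z} = \hat{\vartheta}_{w+w'},
\]
following exactly the choices used in Section 6.2 of \cite{MR3415066}.

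Next I would invoke the combinatorics of broken lines on $B$ already tabulated in \cite{MR3415066} to list, for each pair of generators, the broken lines contributing to their product. The point is that the underlying broken-line counts (their supports $s(\gamma_i)$ and classical coefficients $c(\gamma_i)$) are identical to the classical case; the only effect of quantization is the prefactor $q^{\frac{1}{2}\langle s(\gamma_1),s(\gamma_2)\rangle}$ in Theorem \ref{thm_product}. Plugging this in computes, for instance,
\[
\hat{x}\hat{y} = q^{-\frac{1}{2}}\hat{\vartheta}_{3w+w'} + q\,\hat{z}^2, \qquad \hat{y}\hat{x} = q^{\frac{1}{2}}\hat{\vartheta}_{3w+w'} + q^{-1}\hat{z}^2,
\]
and similar expressions for $\hat{y}\hat{z}$, $\hat{z}\hat{y}$, $\hat{z}\hat{x}$, $\hat{x}\hat{z}$. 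Eliminating the auxiliary $\hat{\vartheta}_{3w+w'}$, $\hat{\vartheta}_{w+2w'}$, $\hat{\vartheta}_{3w+2w'}$, $\hat{\vartheta}_{3w+3w'}$ between these equations yields the three $q$-commutator relations; the cubic relation comes from expanding $\hat{x}\hat{y}\hat{z}$, using $\hat{y}\hat{z} = q^{\frac{1}{2}}\hat{\vartheta}_{2w+w'} + q^{-\frac{1}{2}}\hat{\vartheta}_{w+2w'}$ and the product formula $\hat{\vartheta}_{2w+w'}\hat{\vartheta}_{w+2w'} = q^{\frac{3}{2}}\hat{\vartheta}_{3w+3w'} = q^{\frac{3}{2}}\hat{z}^3$.

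Finally, to upgrade these identities from ``$\hat{x},\hat{y},\hat{z}$ satisfy the listed relations'' to a full presentation, I would argue that $\{\hat{\vartheta}_p\}_{p \in B(\Z)}$ is a $\kk_{\hbar}$-basis of $A_{\fm_R}^{\hbar}$ (by Proposition \ref{prop_algebra}), and that every $\hat{\vartheta}_p$ can be written as a noncommutative polynomial in $\hat{x},\hat{y},\hat{z}$ via iterated products, using the quantum broken line formula to reduce. The abstract $\kk_{\hbar}$-algebra defined by generators $\hat{x},\hat{y},\hat{z}$ and the four stated relations surjects onto $A_{\fm_R}^{\hbar}$; injectivity follows from the fact that in the classical limit $\hbar\to 0$ one recovers the defining equation of $\V_1$, whose coordinate ring has the same Hilbert series (compatible with the $T^D$-weight decomposition of Proposition \ref{prop_torus}) as the algebra generated by the $\hat{\vartheta}_p$ on the nose. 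The main obstacle is bookkeeping: the correct powers of $q^{\pm \frac{1}{2}}$ in the multiplication table for $\V_1$ are more subtle than for $\V_2$ because two broken lines can meet the endpoint with different supports $s(\gamma_i)$, so the symmetrization $q^{\frac{1}{2}\langle s(\gamma_1),s(\gamma_2)\rangle}$ must be tracked line-by-line rather than being a uniform scalar.
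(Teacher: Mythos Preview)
Your proposal is correct and follows essentially the same route as the paper: set up the quadrant model with $w=(1,0)$, $w'=(0,1)$, identify $\hat{x}=\hat{\vartheta}_{2w+w'}$, $\hat{y}=\hat{\vartheta}_v$, $\hat{z}=\hat{\vartheta}_{w+w'}$, compute the pairwise products via Theorem~\ref{thm_product} using the classical broken-line combinatorics of \cite{MR3415066} Section~6.2 with the extra $q^{\frac{1}{2}\langle s(\gamma_1),s(\gamma_2)\rangle}$ factors, and eliminate the auxiliary theta functions. Your final paragraph justifying that these relations give a \emph{complete} presentation (via the $\kk_{\hbar}$-basis and a Hilbert-series comparison in the classical limit) is a point the paper leaves implicit, so your write-up is in fact slightly more careful than the original.
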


\subsection{Conclusion of the proof of Theorem \ref{thm_2_ch3}}
\label{subsection_end_proof_thm2}

In this section we finish the proof of 
Theorem \ref{thm_2_ch3}, which is done
by combination of
\mbox{Proposition \ref{prop_ideal}}
and 
\mbox{Proposition 
\ref{prop_I_min}}.
We follow
\cite[\S 6.1]{MR3415066}.

For every monomial ideal $I$ of $P$, we define the free
$R_I^{\hbar}$-module
\[A_I^{\hbar} = \bigoplus_{p\in B(\Z)}
R_I^{\hbar} \hat{\vartheta}_p \,.\]
According to Proposition
\ref{prop_algebra}, if $I$ has radical 
$\fm_R$, then there is a natural 
$R_I^{\hbar}$-algebra structure on 
$A_I^{\hbar}$.

Let $\Gamma \subset B(\Z)$
be a finite collection of 
integral points such that 
the corresponding quantum
theta functions 
$\hat{\vartheta}_p$
generate the 
$\kk_{\hbar}$-algebra 
$A_{\fm_R}^{\hbar}$.
Using the notation of 
\cref{section_quant_v1_v2}, we can take
$\Gamma=\{v_1,\dots,v_r\}$ if 
$r\geqslant 3$, 
$\Gamma=\{v_1,v_2,w+v_2\}$ if $r=2$, and 
$\Gamma=\{v,w+w',2w+w'\}$ if $r=1$.

\begin{prop} \label{prop_ideal}
There exists a unique 
minimal radical monomial ideal
$J_{\min}^{\hbar}$ of $P$
such that, for every monomial ideal $I$
of $P$ 
of radical containing $J_{\min}^{\hbar}$: 
\begin{itemize}
\item there exists a $R_I^{\hbar}$-algebra structure on 
$A_I^{\hbar}$ such that, for every 
$k>0$,
the natural isomorphism of 
$R_{I+\fm^k}^{\hbar}$-modules $A_I^{\hbar} \otimes
R_{I+\fm^k}^{\hbar} = A_{I+\fm^k}^{\hbar}$
is an isomorphism of $R_{I+\fm^k}^{\hbar}$-algebras.
\item the quantum theta functions $\hat{\vartheta}_p$,
$p \in \Gamma$, generate $A_I^{\hbar}$
as an $R_I^{\hbar}$-algebra.
\end{itemize} 
\end{prop}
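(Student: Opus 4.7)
The plan is to mimic the proof of the classical analog given in Section 6.1 of \cite{MR3415066}, using the fact that the quantum broken lines of \mbox{Definition \ref{defn_broken_line_ch3}} have identical geometric supports to their classical counterparts: the attached monomials are merely dressed with powers of $q^{\frac{1}{2}}$. In particular, finiteness of the sums defining the structure constants $C^p_{p_1,p_2}$ of \mbox{Theorem \ref{thm_product}} is governed by the same conditions as in the classical case.

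The first step is to define $J^{\hbar}_{\min}$ as the intersection of all radical monomial ideals $J \subset P$ enjoying the property that, for every monomial ideal $I$ with $\sqrt{I} = J$, every generic $z \in B_0$, and every pair $(p_1,p_2) \in B(\Z)^2$, only finitely many pairs $(\gamma_1,\gamma_2)$ of quantum broken lines of asymptotic charges $p_1, p_2$ and endpoint $z$ satisfy $c(\gamma_1)c(\gamma_2) \notin I \cdot \kk_{\hbar}[P_{\varphi_{\tau}}]$. This property is stable under intersection of radical ideals, so $J^{\hbar}_{\min}$ itself satisfies it, and its uniqueness as a minimal such ideal is automatic. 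Since the finiteness condition depends only on the geometric supports of broken lines, $J^{\hbar}_{\min}$ coincides with the classical $J_{\min}$ of \cite{MR3415066}.

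The second step is to define the $R_I^{\hbar}$-bilinear product on $A_I^{\hbar}$ for every $I$ with $\sqrt{I} \supset J^{\hbar}_{\min}$ by the formula of \mbox{Theorem \ref{thm_product}}, which is well-posed by the finiteness statement above. Compatibility with the algebra structures given by \mbox{Proposition \ref{prop_algebra}} on the truncations $A^{\hbar}_{I+\fm_R^k}$ is immediate from the formula, since the structure constants involved are the same, only viewed in different quotient rings. Associativity then reduces to associativity modulo $\fm_R^k$ for every $k$, which holds by \mbox{Proposition \ref{prop_algebra}}, combined with the $\fm_R$-adic separation $\bigcap_k (I+\fm_R^k) = I$ (which holds because the relevant structure constants already lie in the finitely generated $R_I^{\hbar}$-submodule of $A_I^{\hbar}$ spanned by a fixed finite set of theta functions). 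The $T^D$-equivariance is preserved by this construction, since each $\hat\vartheta_p$ is a weight vector by \mbox{Proposition \ref{prop_torus}}.

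The main obstacle is the finite generation assertion. I would prove it by a Nakayama-type argument in the $\fm_R$-adic topology: generation by $\{\hat{\vartheta}_p : p\in \Gamma\}$ holds modulo $\fm_R$ by the choice of $\Gamma$, and one lifts the generation inductively, showing that for each $p \in B(\Z)$ the universal polynomial expression of $\hat{\vartheta}_p$ in terms of $\{\hat{\vartheta}_p : p\in \Gamma\}$ with coefficients in $R_{I+\fm_R^k}^{\hbar}$ stabilizes as $k \to \infty$. This stabilization is precisely what the finiteness property defining $J^{\hbar}_{\min}$ guarantees: only finitely many broken line contributions survive modulo $I$, so the required lift exists in $R_I^{\hbar}$. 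Equivalently, one may deduce the quantum finite generation from the classical case modulo $\hbar$ and lift order by order in $\hbar$, using that each $\hbar^n A^{\hbar}_I / \hbar^{n+1} A^{\hbar}_I$ is a module over $A_I$ to which the classical finite generation applies.
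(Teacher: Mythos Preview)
Your overall strategy matches the paper's, which simply says the proof ``follows as its classical version in Section 6.1 of \cite{MR3415066}.'' Expanding that reference into: (i) defining $J^{\hbar}_{\min}$ as an intersection of radical monomial ideals satisfying the finiteness property, (ii) using Theorem~\ref{thm_product} to impose the algebra structure and checking associativity by reduction modulo $\fm_R^k$, and (iii) a Nakayama/lift-mod-$\hbar$ argument for finite generation, is the right outline.

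There is, however, one genuine error: your assertion that quantum broken lines have ``identical geometric supports to their classical counterparts: the attached monomials are merely dressed with powers of $q^{1/2}$,'' and the conclusion that $J^{\hbar}_{\min}=J_{\min}$. This is not justified. The Hamiltonians $\hat{H}_\fd$ of $\hat{\fD}^{\can}$ involve the full series $\sum_{g\geqslant 0} N_{g,\beta}^{Y/D}\hbar^{2g}$, not $N_{0,\beta}^{Y/D}$ decorated by powers of $q$. Consequently $\hat f_\fd$ can contain a monomial $\hat z^{\eta(\beta)-\varphi_{\tau_\fd}(\ell_\beta m_\fd)}$ with nonzero coefficient even when its classical limit in $f_\fd$ vanishes, namely when $N_{0,\beta}^{Y/D}=0$ but $N_{g,\beta}^{Y/D}\neq 0$ for some $g>0$. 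Such a term permits a quantum broken line to bend where no classical one can, so quantum finiteness modulo $I$ does not follow from classical finiteness modulo $I$. The paper explicitly flags this in the Remark after Proposition~\ref{prop_I_min}: only the inclusion $J_{\min}\subset J^{\hbar}_{\min}$ is known, and whether equality holds is left open.

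Fortunately the proposition does not claim $J^{\hbar}_{\min}=J_{\min}$, so simply deleting that sentence (and the opening claim about identical supports) repairs your argument. Your alternative route to finite generation---reducing modulo $\hbar$ and invoking the classical result for $A_I$---only needs the easy inclusion $J_{\min}\subset J^{\hbar}_{\min}$, so it survives intact.
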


\begin{proof}
Follows as its classical version, Proposition 6.5 of
\cite{MR3415066}.
\end{proof}

As in
\cite[\S 6.1]{MR3415066}, the first point of Proposition 
\ref{prop_ideal} is equivalent to the fact that for every 
$p_1, p_2 \in B(\Z)$,
at most finitely many
terms 
$\hat{z}^\beta \hat{\vartheta}_p$
with $\beta \notin I$ appear 
in the expansion given by Theorem
\ref{thm_product}
for 
$\hat{\vartheta}_{p_1}
\hat{\vartheta}_{p_2}$.

\begin{prop} \label{prop_I_min}
Suppose that 
$F \subset \sigma_P$ is a face such
that $F$ does not contain the class of every component of 
$D$. Then $J_{\min}^{\hbar} \subset P-F$.
If $(Y,D)$ is positive, then 
$J_{\min}^{\hbar}=0$.
\end{prop}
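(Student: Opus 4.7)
The plan is to follow closely the classical version of this proposition in Section 6.1 of \cite{MR3415066}, with the combinatorial input now coming from quantum broken lines rather than classical ones. The essential tool will be the $T^D$-equivariance of the product on $A_I^{\hbar}$ established in Proposition \ref{prop_torus}: writing the expansion
\[ \hat{\vartheta}_{p_1} \hat{\vartheta}_{p_2} = \sum_{p \in B(\Z)} C^p_{p_1,p_2} \hat{\vartheta}_p \]
from Theorem \ref{thm_product}, each coefficient $C^p_{p_1,p_2} \in R_I^{\hbar}$ is $T^D$-homogeneous of weight $w(p_1)+w(p_2)-w(p)$, hence of the form $\sum_\beta c_\beta(\hbar) \hat{z}^\beta$ with $\beta$ satisfying $\sum_j (\beta.D_j)\, e_{D_j} = w(p_1)+w(p_2)-w(p)$.

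The first substantive step will be to reduce the finiteness statements needed here to their classical counterparts. By Theorem \ref{thm_product}, the set of classes $\beta \in P$ contributing nontrivially to $C^p_{p_1,p_2}$ is determined by the combinatorial types of pairs of quantum broken lines $(\gamma_1,\gamma_2)$ together with the curve classes labeling monomials in the Hamiltonians of $\hat{\fD}^{\can}$. From the definition of $\hat{\fD}^{\can}$ in Section \ref{section_defn_ch3}, the support of each Hamiltonian $\hat{H}_\fd$ (as a set of curve classes $\beta$) is independent of $\hbar$: only the coefficients, namely the generating series $\sum_g N_{g,\beta}^{Y/D} \hbar^{2g}$, depend on $\hbar$. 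It follows that quantum and classical broken lines of a given combinatorial type and curve-class label are in canonical bijection, so any statement of the form ``only finitely many $\beta \in F$ appear in $C^p_{p_1,p_2}$'' holds in the quantum setting if and only if it holds in the classical setting.

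Given this reduction, both assertions will follow from the classical arguments in Section 6.1 of \cite{MR3415066}. For the first point, if $F \subset \sigma_P$ is a face with $[D_{j_0}] \notin F$ for some $j_0$, the classical result of Gross-Hacking-Keel provides an $R_I$-algebra structure on $A_I$ for every monomial ideal $I$ of radical containing $P-F$, and the bijection of broken lines above then produces an $R_I^{\hbar}$-algebra structure on $A_I^{\hbar}$, finitely generated over $R_I^{\hbar}$ by the $\hat{\vartheta}_p$, $p \in \Gamma$; by minimality this gives $J_{\min}^{\hbar} \subset P - F$. For the positive case, choose positive integers $a_j$ so that $\ell \coloneqq \sum_j a_j [D_j]$ satisfies $\ell . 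D_k > 0$ for all $k$; then for any $\beta$ contributing to $C^p_{p_1,p_2}$, the pairing $\beta.\ell = \sum_j a_j (\beta . D_j)$ is pinned down by the $T^D$-weight $w(p_1)+w(p_2)-w(p)$, and the classical argument of \cite{MR3415066} using this bound will simultaneously control both the $\beta$'s appearing in each $C^p_{p_1,p_2}$ and the $p$'s giving a nonzero contribution. The resulting $R^{\hbar}$-algebra $A_0^{\hbar}$ is then well-defined and finitely generated, yielding $J_{\min}^{\hbar} = 0$. The only place where the quantum case requires separate verification is the broken-line bijection of the previous paragraph, which is immediate from the structure of $\hat{\fD}^{\can}$; the remaining finite-generation and compatibility checks are formally identical to the classical ones.
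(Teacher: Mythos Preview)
Your approach---use the $T^D$-equivariance from Proposition \ref{prop_torus} and rerun the classical argument of \cite{MR3415066}---is exactly what the paper does. However, your intermediate reduction via a ``canonical bijection'' between quantum and classical broken lines is not correct as stated, and the ``if and only if'' claim is false. A class $\beta$ with $N_{0,\beta}^{Y/D}=0$ but $N_{g,\beta}^{Y/D}\neq 0$ for some $g>0$ can produce a bending in a quantum broken line with no classical counterpart; this is precisely the content of the Remark immediately following Proposition \ref{prop_I_min} in the paper, which observes that $J_{\min}\subset J_{\min}^{\hbar}$ with possible strict inclusion in general. So classical finiteness does not formally imply quantum finiteness via any broken-line bijection.

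The reason the classical argument nonetheless transfers is different and simpler: the finiteness proofs in Section 6.1 of \cite{MR3415066} never use that any particular $N_{0,\beta}$ vanishes. They bound the set of $\beta\in P$ that \emph{could} appear in $C^p_{p_1,p_2}$ purely from the $T^D$-weight constraint $\sum_j(\beta.D_j)e_{D_j}=w(p_1)+w(p_2)-w(p)$ together with geometric facts about $NE(Y)$ and, in the positive case, the pairing against $\sum_j a_j D_j$. These bounds are insensitive to genus, so once you have Proposition \ref{prop_torus} the argument runs verbatim with hats everywhere. Drop the bijection paragraph and say this instead; then your proof matches the paper's.
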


\begin{proof}
The proof is formally identical to the proof of its classical version, 
\cite[Proposition 6.6]{MR3415066}.
The main input, the $T^D$-equivariance, is given in our case by Proposition 
\ref{prop_torus}.
\end{proof}

Let $J_{\min}$ be the ideal defined by \cite[Proposition 6.5]{MR3415066}. We obviously have 
$J_{\min} \subset J_{\min}^{\hbar}$, as the vanishing of all genus Gromov-Witten invariants includes the vanishing of genus zero Gromov-Witten invariants. 
If $(Y,D)$ is positive then $J_{\min}
=J_{\min}^{\hbar}=0$. In general, it is unclear if we always have $J_{\min}=J_{\min}^{\hbar}$ or if there are examples with $J_{\min} \neq J_{\min}^{\hbar}$.
Geometrically, the question is whether or not some vanishing of genus-0 Gromov--Witten invariants implies a vanishing of all higher-genus Gromov--Witten invariants.

\subsection{Conclusion of the proof of Theorem \ref{thm_q}: $q$-integrality}
\label{subsection_end_proof_thm_q}

The $R_I^{\hbar}$-algebra structure on 
\[ A_I^{\hbar} = \bigoplus_{p\in B(\Z)}
R_I^{\hbar} \hat{\vartheta}_p \]
is given by the product formula of Theorem \ref{thm_product},
\[ \hat{\vartheta}_{p_1}
\hat{\vartheta}_{p_2}
= \sum_{p \in B(\Z)} C_{p_1, p_2}^p \hat{\vartheta}_p \,.\]
A priori, we have $C_{p_1,p_2}^p
\in R_I^{\hbar}
=R_I[\![\hbar]\!]$. Theorem
\ref{thm_q} follows from
the following Proposition.

\begin{prop} \label{prop_q_integ}
For every $p_1,p_2,p_3 \in B(\Z)$, we have 
\[C_{p_1,p_2}^p \in R_I^q=R_I[q^{\pm
\frac{1}{2}}]\,,\] where $q=e^{i \hbar}$.
More precisely, $C_{p_1,p_2}^p$ is the power series expansion around $\hbar=0$ of a Laurent polynomial in $q^{\frac{1}{2}}$ after the change of variables $q=e^{i \hbar}$.
\end{prop}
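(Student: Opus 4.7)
The plan is to reduce $q$-integrality of the structure constants to $q$-integrality of the scattering functions $\hat{f}_{\fd}$ of the canonical quantum scattering diagram, and then to invoke the main result of \cite{bousseau2018quantum_tropical}.

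First I would observe that in the formula of Theorem \ref{thm_product},
\[ C_{p_1,p_2}^p = \sum_{\gamma_1, \gamma_2} c(\gamma_1)\, c(\gamma_2)\, q^{\frac{1}{2}\langle s(\gamma_1), s(\gamma_2)\rangle}, \]
the sum is \emph{finite} modulo $I$ by Lemma \ref{lem_finite}, and the factor $q^{\frac{1}{2}\langle s(\gamma_1), s(\gamma_2)\rangle}$ is tautologically a Laurent monomial in $q^{\pm \frac{1}{2}}$. Hence it suffices to prove that each broken-line coefficient $c(\gamma) \in R_I^{\hbar}$ is the $\hbar$-expansion of an element of $R_I^q = R_I[q^{\pm \frac{1}{2}}]$.

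Next I would unwind Definition \ref{defn_broken_line_ch3}: the monomial $\Mono(\gamma)$ carried by the last segment of $\gamma$ is obtained from the initial monomial $\hat{z}^{\varphi_\sigma(p)}$ by a finite sequence of multiplications by individual terms appearing in products of the form $\prod_k \hat{f}_{\fd_k}(q^j \hat{z})$. Therefore $c(\gamma)$ is a sum of products of coefficients of scattering functions $\hat{f}_{\fd}$ (evaluated at powers of $q$), and it is enough to prove that for every ray $(\fd,\hat{H}_\fd)$ of $\hat{\fD}^{\can}$, one has $\hat{f}_\fd \bmod I \in R_I^q \widehat{[P_{\varphi_{\tau_\fd}}]}$.

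To establish this last statement, I would follow the reduction chain already used in the proof of Theorem \ref{thm_consistency}. Steps I--III allow us to assume $(Y,D)=(Y_\fm, \partial Y_\fm)$ with a toric model and to pass to the Gross--Siebert locus, where Proposition \ref{lem_mod_G} exhibits the ingoing rays of $\nu(\hat{\fD}^{\can})$ with $\hat{f}$-factors of the form $1+q^{-\frac{1}{2}}\hat{z}^{\bar{\varphi}(m_\rho)-E_j}$, which are manifestly in $R_I^q$. The isomorphism $p_\rho$ of Lemma \ref{lem_isom_ch3} matches $\hat{f}$-factors of $\hat{\fD}^{\can}$ with those of $\nu(\hat{\fD}^{\can})$ using only relations involving $q^{\pm \frac{1}{2}}$, so $q$-integrality passes back and forth. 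The remaining (outgoing) rays of $\nu(\hat{\fD}^{\can}) = S(\hat{\fD}_\fm)$ are described by Proposition \ref{prop_scat} in terms of the generating series $\sum_{g\geqslant 0} N_{g,\beta_p}^{Y_\fm/\partial Y_\fm}\hbar^{2g}$; and the main theorem of \cite{bousseau2018quantum_tropical} asserts precisely that, after the change of variables $q=e^{i\hbar}$ and passage from $\hat{H}_\fd$ to $\hat{f}_\fd$ (which introduces the factor $q^\ell - 1$ cancelling the $1/(q^{\ell/2}-q^{-\ell/2})$ built into the Hamiltonian), these scattering functions lie in $R_I^q$. The main obstacle here is purely in invoking this external result: once the reduction is carried out, $q$-integrality is transported across $\nu$ by the elementary algebraic identities of Lemma \ref{lem_isom_ch3}, yielding $c(\gamma) \in R_I^q$ and therefore $C_{p_1,p_2}^p \in R_I^q$, which is the desired statement.
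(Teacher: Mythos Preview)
Your proposal is correct and follows essentially the same route as the paper: reduce from $C_{p_1,p_2}^p$ to the broken-line coefficients $c(\gamma)$, then to the scattering functions $\hat f_\fd$, then via the toric-model reduction and Lemma~\ref{lem_isom_ch3} to the outgoing rays of $\nu(\hat\fD^{\can})=S(\hat\fD_\fm)$, and finally invoke the integrality result of \cite{bousseau2018quantum_tropical}. The only difference is that the paper makes the last step fully explicit by citing Theorem~33 of \cite{bousseau2018quantum_tropical} for the BPS-type decomposition with $\Omega_p^{Y_\fm}(q^{1/2})\in\Z[q^{\pm 1/2}]$ and then applying Lemma~\ref{lem_ex_ch3} to write $\hat f_\fd$ as a finite product $\prod_{p,j}(1+q^{(j-1)/2}\hat z^{\cdots})^{\Omega_{p,j}^{Y_\fm}}$, whereas you leave this as an appeal to ``the main theorem''.
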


\begin{proof}
Recall that, if $\gamma$ is a quantum broken line of endpoint 
$Q$ in a cone $\tau$ of $ \Sigma$,
we write the monomial
$\Mono(\gamma)$
attached to the segment ending at $Q$
as 
\[ \Mono(\gamma) = c(\gamma) \hat{z}^{\varphi_{\tau} (s(\gamma))} \]
with $c(\gamma)
\in \kk_{\hbar}[P_{\varphi_\tau}]$
and $s(\gamma) 
\in \Lambda_{\tau}$.

By definition, we have 
\[ C_{p_1,p_2}^p
= \sum_{\gamma_1, \gamma_2}
c(\gamma_1) c(\gamma_2) 
q^{\frac{1}{2} \langle
s(\gamma_1), s(\gamma_2) \rangle} \,,\]
where the sum is over all broken lines $\gamma_1$ and $\gamma_2$, of asymptotic charges $p_1$ and $p_2$,
satisfying $s(\gamma_1)+s(\gamma_2)=p$, and both ending at a given point $z \in B 
-\Supp_I(\fD^\can)$ very close to $p$. 

So it is enough to show that, for every 
$\gamma$ quantum broken line of endpoint 
$Q$ in a cone $\tau$ of $ \Sigma$,
we have $c(\gamma) \in \kk_q[P_{\varphi_\tau}]$. We will show more generally that for every quantum broken line $\gamma$ 
of $\hat{\fD}^\can$, and for every $L$ domain of linearity of $\gamma$, the attached monomial $m_L=c_L \hat{z}^{p_L}$ satisfies
$c_L \in \kk_q$.

This is obviously true if $L$ is the unbounded domain of linearity of $\gamma$ since then $c_L=1$.
Given the formula in Definition 
\ref{defn_broken_line_ch3} specifying the change of monomials when the
quantum broken line bends, it is then enough to show that, for every ray $(\fd,\hat{H}_\fd)$ of 
$\hat{\fD}^\can$, the corresponding 
$\hat{f}_\fd$ is in 
$\kk_q\widehat{[P_{\varphi_{\tau_\fd}}}]$.

Given the argument used in
\cite[\S 6.2]{MR3415066}, we can assume that 
$(Y,D)$ admits a toric model.
Furthermore, by deformation invariance of log Gromov--Witten invariants in log smooth families, we can assume that $(Y,D)$ is obtained from its toric model by blowingup distinct points, that is, that there exists $\fm=(m_1,\dots, m_n)$ such that $(Y,D)=(Y_\fm,\partial
Y_\fm)$, as in 
\cref{subsection_GS_locus}.
In 
\cref{subsection_pushing} we introduced a quantum scattering diagram $\nu(\hat{\fD}^\can)$. 
From the definition of 
$\nu(\hat{\fD}^\can)$ and the explicit formulas given in the proof of \mbox{Lemma
\ref{lem_isom_ch3}}
comparing $\hat{\fD}^\can$ and 
$\nu(\hat{\fD}^\can)$, it is enough to prove the result for outgoing rays $\nu(\hat{\fD}^\can)$.

By
Proposition 
\ref{prop_comp_ch3}, we have 
$\nu(\hat{\fD}^\can) =
S(\hat{\fD}_\fm)$. So it remains to show that, for every outgoing ray 
$(\fd,\hat{H}_\fd)$ of 
$S(\hat{\fD}_\fm)$, the corresponding 
$\hat{f}_\fd$
is in $\kk_q[\widehat{P_{\overline{\varphi}}}]$.

By Proposition 
\ref{prop_scat}, the Hamiltonian $\hat{H}_\fd$ attached 
to an outgoing ray 
$\fd$ of 
$S(\hat{\fD}_\fm)-
\hat{\fD}_\fm$ is given by 
\[\hat{H}_{\fd}=
\left(
\frac{i}{\hbar}
\right)
\sum_{p \in P_{m_\fd}} 
\left(
\sum_{g \geqslant 0} N_{g,\beta_p}^{Y_\fm /
\partial Y_\fm}
\hbar^{2g} \right)
\hat{z}^{\beta_p-\bar{\varphi}(\ell_\beta m_{\fd})} \,. \]
According to
\cite[Theorem 33]{bousseau2018quantum_tropical},
for every 
$p \in P_{m_\fd}$, there exists 
\[ \Omega_p^{Y_\fm}(q^{\frac{1}{2}})
=\sum_{j\in \Z}
\Omega_{p,j}^{Y_\fm}
q^{\frac{j}{2}} \in \Z[q^{\pm \frac{1}{2}}]\,,\]
such that 
\[\left(\frac{i}{\hbar}
\right)
\left( \sum_{g \geqslant 0} N_{g,p}^{Y_\fm} \hbar^{2g-1} \right)
=-
(-1)^{\beta_p.\partial Y_\fm+1}
\sum_{p=\ell p'} \frac{1}{\ell}
\frac{1}{
q^{\frac{\ell}{2}}-q^{-\frac{\ell}{2}}} \Omega_{p'}^{Y_\fm}(q^{\frac{\ell}{2}})  
\,,\]
which can be rewritten  
\[\left(\frac{i}{\hbar}
\right)
\left( \sum_{g \geqslant 0} N_{g,p}^{Y_\fm} \hbar^{2g-1} \right)
=\sum_{j \in \Z}
\sum_{p=\ell p'} \frac{1}{\ell}
\frac{1}{
q^{\frac{\ell}{2}}-q^{-\frac{\ell}{2}}}
(-1)^{\ell \beta_{p'}.\partial Y_\fm} \Omega_{p',j}^{Y_\fm}q^{\frac{j \ell}{2}}  
\,,\]
Using Lemma
\ref{lem_ex_ch3}, we get that 
\[\hat{f}_\fd
=\prod_{p \in P_{m_\fd}}
\prod_{j \in \Z}
(1+q^{\frac{j-1}{2}} \hat{z}^{\beta_p
-\bar{\varphi}(\ell_\beta m_\fd)})^{\Omega_{p,j}^{Y_\fm}}\,,\]
which concludes the proof.
\end{proof}

As the initial rays of the scattering diagram $\hat{\fD}_\fm$ depend on $\hbar$ through rational functions of $q^{\frac{1}{2}}$, it follows directly from the Kontsevich-Soibelman algorithm producing 
$S(\hat{\fD}_\fm)$ that all the dependence on $\hbar$ in $S(\hat{\fD}_\fm)$ is through rational functions of $q^{\frac{1}{2}}$.
But we do not know of an elementary way to see directly that the functions $\hat{f}_\fd$ 
have coefficients which are Laurent polynomials in $q^{\frac{1}{2}}$ and not general rational functions in $q^{\frac{1}{2}}$. In the above proof of Proposition 
\ref{prop_q_integ}, we use
\cite[Theorem 33]{bousseau2018quantum_tropical}, which relies on some quite deep results of 
\cite{MR2851153}.

\section{Example: degree 5 del Pezzo surfaces}
\label{section_examples_ch3}

Let $Y$ be a del Pezzo surface of degree 5, that is, a blow-up of 
$\PP^2$ at four points in general 
position, and let $D$ be an anticanonical cycle of five $(-1)$-curves
on $Y$. Then 
$(Y,D)$ is a positive Looijenga pair.
The Looijenga pair 
$(Y,D)$ is studied in 
\cite[Examples 1.9, 3.7 and 6.12]{MR3415066}.
Remark that the interior $U=Y-D$ has topological Euler characteristic 
$e(U)=2$.

Let $j$ be an index 
modulo 5. We denote by $D_j$ the components of $D$ and $\rho_j$ the corresponding 
one-dimensional cones in the tropicalization
$(B,\Sigma)$ of $(Y,D)$.
Let
$v_j$ be the primitive generator of $\rho_j$ and $E_j$ be the unique $(-1)$-curve 
in $Y$ which is not contained in $D$ and meets $D_j$ transversally at one point. 

The only curve classes contributing to the canonical quantum scattering diagram
$\hat{\fD}^{\can}$ are multiples of some 
$[E_j]$, and so $\hat{\fD}^{\can}$
consists of five rays $(\rho_j, 
\hat{H}_{\rho_j})$.
By \cite[Lemma 23]{bousseau2018quantum_tropical}
we have 
\[ \hat{H}_{\rho_j}
=
i
\sum_{\ell \geqslant 1}\frac{1}{\ell} \frac{(-1)^{\ell-1}}{2 \sin \left( \frac{\ell \hbar}{2} \right)} \hat{z}^{\ell \eta([E_{j}])-\ell \varphi_{\rho_j}(v_j)}   \,.\]
and so, by Lemma
\ref{lem_ex_ch3}, the corresponding 
$\hat{f}_{\rho_j}$ are given by 
\[ \hat{f}_{\rho_j}
=1+q^{-\frac{1}{2}} 
\hat{z}^{E_j-\varphi_{\rho_j}(v_j)}\,.\]

\begin{prop}
The $\kk[NE(Y)]$-algebra 
defined by the product formula
of Theorem 
\ref{thm_product}
is generated by the quantum theta functions 
$\hat{\vartheta}_{v_j}$, satisfying 
the relations
\[ \hat{\vartheta}_{v_{j-1}}
\hat{\vartheta}_{v_{j+1}}
=\hat{z}^{[D_j]}
(\hat{z}^{[E_j]}
+q^{\frac{1}{2}} \hat{\vartheta}_{v_j}) \,,\]
\[ \hat{\vartheta}_{v_{j+1}}
\hat{\vartheta}_{v_{j-1}}
=\hat{z}^{[D_j]}
(\hat{z}^{[E_j]}
+q^{-\frac{1}{2}} \hat{\vartheta}_{v_j}) \,.\]
\end{prop}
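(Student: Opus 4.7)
The plan is a direct application of Theorem \ref{thm_product} (the broken line product formula), exploiting the fact that $\hat{\fD}^{\can}$ has a very simple shape in this example: only five outgoing rays, one on each one-dimensional cone $\rho_j$, with the explicit binomial $\hat{f}_{\rho_j}=1+q^{-\frac{1}{2}}\hat{z}^{[E_j]-\varphi_{\rho_j}(v_j)}$. To compute $\hat{\vartheta}_{v_{j-1}}\hat{\vartheta}_{v_{j+1}}$ I would choose an endpoint $z$ in the interior of the two-dimensional cone $\sigma_{j-1,j}$, very close to $v_j$, and work entirely in the chart $\psi_j$, where $\psi_j(v_{j-1})=(1,0)$, $\psi_j(v_j)=(0,1)$, and $\psi_j(v_{j+1})=(-1,1)$ (using $D_j^2=-1$).

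The enumeration of broken lines should then be short. A quantum broken line of charge $v_{j-1}$ ending at $z$ is the horizontal ray $y=z_y$ running in from $+\infty$; it does not meet any ray of $\hat{\fD}^{\can}$ and contributes $c=1$, $s=v_{j-1}$. A quantum broken line of charge $v_{j+1}$ ending at $z$ travels in direction $(1,-1)$; it necessarily crosses $\rho_j$, and at that crossing two options arise: the straight continuation (the ``$1$'' term in $\hat{f}_{\rho_j}$), and a bent broken line in which the second summand of $\hat{f}_{\rho_j}$ is picked up. A short check shows that after the bend the direction becomes $v_{j-1}$, so the bent line consists of the outgoing diagonal segment meeting $\rho_j$ at $(0,z_y)$ and then a horizontal segment to $z$, geometrically distinct from the straight line which meets $\rho_j$ at $(0,z_x+z_y)$. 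I expect no further broken lines for generic $z$.

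The main (and only nontrivial) step will be tracking the attached monomials carefully through the bend, using two inputs: (i) the kink identity $\varphi_{\sigma_{j,j+1}}(v_{j+1})=\varphi_{\sigma_{j-1,j}}(v_{j+1})+[D_j]$, coming from $\kappa_{\rho_j,\varphi}=[D_j]$, and (ii) the non-commutative product $\hat{z}^p\hat{z}^{p'}=q^{\frac{1}{2}\langle r(p),r(p')\rangle}\hat{z}^{p+p'}$. For the straight line of charge $v_{j+1}$ this gives $c=\hat{z}^{[D_j]}$ and $s=v_{j+1}$; for the bent line, the $q^{-\frac{1}{2}}$ in $\hat{f}_{\rho_j}$ is exactly canceled by the $q^{\frac{1}{2}}$ from the commutation of $\hat{z}^{\varphi(v_{j+1})}$ past $\hat{z}^{[E_j]-\varphi_{\rho_j}(v_j)}$, leaving $c=\hat{z}^{[D_j]+[E_j]}$ and $s=-v_{j-1}$. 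This cancellation is the step one must get right; everything else is bookkeeping.

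Finally I would apply Theorem \ref{thm_product}: the surviving pairs $(\gamma_1,\gamma_2)$ of total charge $p$ are the straight/straight pair (yielding $p=v_j$ with twist $q^{\frac{1}{2}\langle v_{j-1},v_{j+1}\rangle}=q^{\frac{1}{2}}$ and coefficient $\hat{z}^{[D_j]}$) and the straight/bent pair (yielding $p=0$ with trivial twist and coefficient $\hat{z}^{[D_j]+[E_j]}$). Adding these gives $\hat{\vartheta}_{v_{j-1}}\hat{\vartheta}_{v_{j+1}}=\hat{z}^{[D_j]}\bigl(\hat{z}^{[E_j]}+q^{\frac{1}{2}}\hat{\vartheta}_{v_j}\bigr)$. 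The opposite product $\hat{\vartheta}_{v_{j+1}}\hat{\vartheta}_{v_{j-1}}$ is obtained from the same enumeration, merely swapping the order of the pairing $\langle v_{j+1},v_{j-1}\rangle=-1$ in the twist, which produces the companion relation with $q^{-\frac{1}{2}}$.
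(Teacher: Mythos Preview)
Your proposal is correct and follows essentially the same approach as the paper's proof. The paper is terser---it defers the enumeration of broken lines to the classical computation in Example~3.7 of \cite{MR3415066} and then only checks the $q$-twists: for the $\hat{\vartheta}_0$ coefficient the final directions are opposite so the twist is trivial, and for the $\hat{\vartheta}_{v_j}$ coefficient the two straight lines give $q^{\pm\frac{1}{2}\langle v_{j-1},v_{j+1}\rangle}$. Your write-up makes the same computation explicit, including the cancellation of the $q^{-\frac{1}{2}}$ from $\hat{f}_{\rho_j}$ against the $q^{\frac{1}{2}}$ from the non-commutative product in the bent line, which is exactly why the $\hat{\vartheta}_0$ coefficient agrees with its classical value.
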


\begin{proof}
The description of quantum broken lines
is identical to the description of classical 
broken lines given in 
\cite[Example 3.7]{MR3415066}.

The term $\hat{z}^{[D_j]}
\hat{z}^{[E_j]}$
is the coefficient of 
$\hat{\vartheta}_0=1$.
The final directions of the broken lines 
$\gamma_1$ and 
$\gamma_2$ satisfy 
$s(\gamma_1)+s(\gamma_2)=0$,
so
$\langle s(\gamma_1),
s(\gamma_2) \rangle =0$
and the quantum result is identical to the classical one.

The term $\hat{z}^{[D_j]}
\hat{\vartheta}_{v_j}$ corresponds to two straight
broken lines for 
$v_{j-1}$ and $v_{j+1}$, with endpoint the point 
$v_j$ of 
$\rho_j$. The
corresponding extra power of $q$ in Theorem
\ref{thm_product}
is $q^{\pm \frac{1}{2}
\langle v_{j-1}, v_{j+1} \rangle}
=q^{\pm \frac{1}{2}}$.
\end{proof}

Setting $[E_j]=[D_j]=0$, we recover some well-known description of the $A_2$ quantum $\cX$-cluster algebra: see formula (60) in \cite[\S 3.3]{MR2567745}.

\section{Higher-genus mirror symmetry and string theory}
\label{section_physics}

\subsection{From higher-genus to quantization via Chern--Simons theory}
In \cite[\S 9]{bousseau2018quantum_tropical},
we compared our enumerative
interpretation of the $q$-refined 2-dimensional Kontsevich-Soibelman scattering diagrams in terms of higher-genus log Gromov--Witten invariants of log Calabi-Yau surfaces with the physical derivation of the refined wall-crossing formula from topological string given by Cecotti-Vafa
\cite{cecotti2009bps}.

A parallel discussion shows that the 
main result of the present paper, the connection between higher-genus log Gromov--Witten invariants of log Calabi-Yau surfaces and quantization of the mirror geometry, also fits naturally into this story.

Let $(Y,D)$ be a Looijenga pair. The complement 
$U\coloneqq Y-D$
is a non-compact holomorphic symplectic surface admitting a Lagrangian torus fibration \cite{MR2024634}.
According to the SYZ picture of mirror symmetry, the mirror of $U$ should be obtained by taking the dual Lagrangian torus
fibration, corrected by counts of holomorphic discs in $U$ with boundary on the torus fibers.
In some cases, $U$ admits a hyperkähler metric,
such that the original complex structure of $U$
is the compatible complex structure $J$, and such that 
the SYZ fibration becomes $I$-holomorphic Lagrangian. 
Typical examples include two-dimensional Hitchin moduli spaces: see
\cite{boalch2012hyperkahler}
for a nice review.
From now on, we assume that we are in such case,
and so we should be able to consider the kind of twistorial construction considered by Cecotti and Vafa.

Let $(I,J,K)$ be a quaternionic triple of compatible complex structure, 
$(\omega_I, \omega_J, \omega_K)$ be the corresponding triple of real symplectic forms and 
$(\Omega_I, \Omega_J, \Omega_K)$ be the 
corresponding triple of holomorphic symplectic forms. Let $\Sigma \subset U$ be
a fiber of the original SYZ fibration. 
It is a $I$-holomorphic Lagrangian subvariety of $U$, that is, a submanifold such that $\Omega_I|_\Sigma =0$. It is an example of a $(B,A,A)$-brane in $U$
in the sense of \cite{MR2306566}, that is, a complex subvariety for the complex structure $I$ and a Lagrangian for any of the real symplectic forms
$(\cos \theta) \omega_J + (\sin \theta) \omega_K$, $\theta \in \R$.
There is in fact a twistor sphere 
$J_\zeta$, $\zeta \in \PP^1$, of compatible complex structures, such that $I=J_0$, 
$J=J_1$ and $K=J_i$.
Let $X$ be the non-compact Calabi-Yau 3-fold,
of underlying smooth manifold $U \times \C^{*}$ and 
equipped with a complex structure twisted in a twistorial way, that is, such that the fiber over 
$\zeta \in \C^{*}$ is the complex variety 
$(U,J_\zeta)$. Consider $S^1 \subset \C^{*}$ and 
$L \coloneqq \Sigma \times S^1 \subset X$.

We consider the open topological string $A$-model on $(X,L)$, that is, the count of holomorphic maps 
$(C, \partial C) \rightarrow (X,L)$ from an open Riemann surface $C$ to $X$ with boundary $\partial C$ mapping to $L$\footnote{Usually, A-branes, that is, boundary conditions for the A-model, have to be Lagrangian submanifolds. In fact, $L$ is not Lagrangian in $X$ but only totally real. Combined with specific aspects of the twistorial geometry, it is probably enough to have well-defined worldsheet instanton contributions. As suggested in \cite{cecotti2009bps}, it would be interesting to clarify this point. }. We restrict ourselves to open Riemann surfaces with only one boundary component. Given a class $\beta \in H_2(X,L)$, let $N_{g,\beta} \in \Q$ be the `count' of holomorphic maps $\varphi \colon (C,\partial C) \rightarrow (X,L)$ with $C$ a genus-$g$ Riemann surface with one boundary component and $[\varphi(C, \partial C)]=\beta$.
A 
holomorphic map $\varphi \colon (C,\partial C) \rightarrow (X,L)$ of class $\beta \in H_2(X,L)$ is a $J_{e^{i\theta}}$-holomorphic map to 
$U$, at a constant value $e^{i \theta} \in S^1$, where 
$\theta$ is the argument of $\int_\beta \Omega_I$.

The log Gromov--Witten invariants with insertion of a top lambda class 
$N_{g,\beta}$, introduced in
\cref{section_canonical}, should be viewed as a rigorous definition of the open Gromov--Witten invariants in the twistorial geometry $X$, with 
boundary on a torus fiber $\Sigma$ `near infinity'.
We refer to \cite[Lemma 7]{MR2746343}
for comparison, in the compact analogue given by K3 surfaces, between Gromov--Witten invariants of a holomorphic symplectic surface with insertion of a top lambda class and Gromov--Witten invariants of a corresponding three-dimensional
twistorial geometry. 
The key point is that the
lambda class comes from the
comparison of the deformation theories of stable maps mapping to the surface or to the 3-fold.

According to Witten \cite{MR1362846}, 
in the absence of non-constant worldsheet instantons, 
the effective spacetime theory of the A-model
on the A-brane $L$ is Chern--Simons theory of gauge group $U(1)$. 
The non-constant worldsheet instantons deform this result: see \cite[\S 4.4]{MR1362846}. 
The effective spacetime theory on the $A$-brane $L$ is still a $U(1)$ gauge theory but the Chern--Simons action is deformed by additional terms involving the worldsheet instantons. The genus-0 worldsheet instantons correct the classical action, whereas higher genus worldsheet instantons give higher quantum corrections.

We now arrive at the key point:
the relation between the SYZ mirror construction in terms of dual tori and 
the Chern--Simons story, whose quantization is supposed to be naturally related to 
higher-genus curves.
As $L=\Sigma \times S^1$, we can adopt a Hamiltonian description where $S^1$ plays the role of the time direction. The key point is that the classical phase space of $U(1)$ Chern-Simons theory
on $L=\Sigma \times S^1$ 
is the space 
of $U(1)$ flat connections on $\Sigma$,
that is, it is exactly the dual torus of $\Sigma$ used in the construction of the SYZ mirror. The genus-0 worldsheet instanton corrections to $U(1)$ Chern-Simons theory then translate into the genus-0 worldsheet instantons corrections in the SYZ construction of the mirror.

The Poisson structure on the mirror comes from the natural Poisson structure on the classical phase space of Chern--Simons theory. It is then natural to think that a quantization of the mirror should be obtained from quantization of Chern-Simons theory.
Quantization of the torus of flat connections gives a quantum torus and higher-genus worldsheet instantons corrections to quantum Chern-Simons theory 
imply that these quantum tori should be glued together in a non-trivial way.
We recover the main construction of the present paper: gluing quantum tori together using higher-genus curve counts in the gluing functions.
The fact that we have been able to give a rigorous version of this construction should be viewed as a highly non-trivial mathematical check of the above string-theoretic expectations.

\subsection{Quantization and higher-genus mirror symmetry}

In the previous section we explained how to understand the connection between higher-genus log Gromov--Witten invariants and deformation quantization using Chern-Simons theory as an intermediate step. In this explanation, a key role is played by the non-compact Calabi-Yau 3-fold $X$, a partial twistor family of $U$.

In the present section, we adopt a slightly different point of view, and we also consider a similar non-compact Calabi-Yau 3-fold on the mirror side: 
$Y=V \times \C^{*}$.
It is natural to expect that the mirror symmetry relation between $U$ and $V$ lifts to a mirror symmetry relation between the Calabi-Yau 3-folds $X$ and $Y$.

As explained in the previous section, the higher-genus 
log Gromov--Witten invariants considered in the present paper should be viewed 
as part of an algebraic version of the open higher-genus A-model on $X$.
Open higher-genus A-model should be mirror to open 
higher-genus B-model on $Y$.
We briefly explain below why the open higher-genus 
B-model on $Y=V \times \C^{*}$ has something to do with
quantization of the holomorphic symplectic variety 
$V$.

The string field theory of open higher-genus 
B-model for a single B-brane wrapping $Y$
is the holomorphic Chern-Simons theory, of field 
a $(0,1)$-connection $A$ and of action
\[ S(A)=\int_Y \Omega_Y \wedge A \wedge \bar{\partial} A \,,\]
where $\Omega_Y$ is the holomorphic volume form of $Y$. We will be rather interested 
in a single B-brane wrapping a curve 
$\C^{*}_v \coloneqq \{v\} \times \C^{*} \subset Y$,
where $v$ is a point in $V$. The
dimensional reduction of holomorphic Chern-Simons theory to describe a B-brane wrapping a curve was first studied by Aganagic and Vafa \cite[\S 4]{aganagic2000mirror}. 
Writing locally 
\[ \Omega_Y = dx \wedge dp \wedge \frac{dz}{z} \,,\]
where $(x,p)$ are local holomorphic Darboux
coordinates on $V$ near $v$ and $z$ a linear coordinate along $\C^{*}$, the fields of the reduced theory on 
$\C^{*}_v$ are functions $(x(z,\bar{z}),p(z,\bar{z}))$ and 
the action is 
\[S(x,p)= \int_{\C^{*}_v} \frac{dz}{z} \wedge p \wedge \bar{\partial} x \,.\]
A further dimensional reduction from the cylinder $\C^{*}_v$ to a real line $\R_t$ leads to a theory of a particle moving on $V$, of position $(x(t),z(t))$, of action
\[S(x,p)=\int_{\R_t} p(t) dx(t) \,.\]
In particular, $p(t)$ and $x(t)$ are canonically conjugate variables and in the corresponding quantum theory, 
obtained as dimensional reduction of the higher-genus 
B-model, they should become operators satisfying the
canonical commutation relations 
$[x,p]=\hbar$. We conclude that the higher-genus B-model
of the B-branes $\C^{*}_v$ should lead to a 
quantization of the holomorphic symplectic surface $V$.
The same relation between higher-genus B-model and quantization appears in \cite{MR2191887} and follow-ups.

We conclude that our main result,
Theorem \ref{main_thm_ch3}, should be viewed as an example of a higher genus mirror symmetry relation.

\newpage

\newcommand{\etalchar}[1]{$^{#1}$}

\vspace{+8 pt}
\noindent
Department of Mathematics \\
Imperial College London \\
pierrick.bousseau12@imperial.ac.uk

\end{document}